\documentclass[11pt]{amsart}
\usepackage[margin=1in]{geometry}
\usepackage{bbm}
\usepackage{float, graphicx}
\usepackage[]{epsfig}
\usepackage{amsmath, amsthm, amssymb,}
\usepackage{epsfig}
\usepackage{verbatim}
\usepackage{multicol}
\usepackage{url}
\usepackage{latexsym}
\usepackage{mathrsfs}
\usepackage[colorlinks, bookmarks=true]{hyperref}
\usepackage{graphicx}
\usepackage{amsmath}
\usepackage{enumerate}
\usepackage[normalem]{ulem}
\usepackage{bm}
\usepackage{dsfont}
\usepackage{stmaryrd}
\usepackage{enumitem}
\usepackage{soul}
\usepackage[noadjust]{cite}
\usepackage{color}
\usepackage{xcolor}
\usepackage{hyperref}
\usepackage{indentfirst}
\usepackage{cleveref}
\usepackage{subcaption}
\usepackage{thmtools}

\numberwithin{equation}{section}
\usepackage{mathtools}

\newcommand{\bR}{{\mathbb R}}
\newcommand{\bS}{{\mathbb S}}

\newcommand{\bZ}{{\mathbb Z}}

\def\<{\langle}
\def\>{\rangle}

\def\Lamp[#1]{\boldsymbol{\Lambda}_{\mathrm{AMP}}^{(#1)}}
\def\lalg[#1]{\Lambda_{\mathrm{alg}, #1}}

\def\de{{\rm d}}

\def\calP{\mathcal{P}}

\renewcommand{\Im}{{\rm Im}}

\newcommand{\pa}{\partial}

\newcommand{\R}{\mathbb{R}}

\newcommand{\dist}{\operatorname{dist}}

\newcommand{\RN}[1]{%
  \textup{\uppercase\expandafter{\romannumeral#1}}%
}

\newcommand{\RNum}[1]{\uppercase\expandafter{\romannumeral #1\relax}}

\newcommand\du{\widetilde{u}}
\newcommand\dw{\widetilde{w}}
\newcommand\dphi{\widetilde{\phi}}
\theoremstyle{plain} %plain, definition, remark
\newtheorem{theorem}{Theorem}[section]
\newtheorem*{theorem*}{Theorem}
\newtheorem{lemma}[theorem]{Lemma}
\newtheorem*{lemma*}{Lemma}

\newtheorem*{corollary*}{Corollary}

\newtheorem*{proposition*}{Proposition}
\newtheorem{assumption}[theorem]{Assumption}
\newtheorem*{assumption*}{Assumption}

\newtheorem{definition}[theorem]{Definition}
\newtheorem*{definition*}{Definition}

\newtheorem*{example*}{Example}
\newtheorem{remark}[theorem]{Remark}

\newtheorem*{remark*}{Remark}
\newtheorem*{remarks*}{Remarks}

\title[Whispering Gallery Modes for Semilinear Dirichlet  Eigenvalue Problems]{Whispering Gallery Modes for Semilinear Dirichlet Eigenvalue Problems}

\date{\today}

\author[Z. Lin]{Zhengjiang Lin}
\address{(ZL) Department of Mathematics, Massachusetts Institute of Technology, 77 Massachusetts Ave, 02139 Cambridge MA, USA} %    Current address
\email{linzj@mit.edu}

\begin{document}

\maketitle

\begin{abstract}
We study the boundary localization phenomenon, known as whispering gallery modes, for weak solutions to semilinear Dirichlet eigenvalue problems in the unit ball $B_1 \subseteq \bR^d$ ($d \geq 2$) of the form
\[
\begin{cases}
-\Delta u + f(u) = \lambda u & \text{in } B_1,\\
u = 0 & \text{on } \partial B_1.
\end{cases}
\]
Here, $f = F'$ where $F$ is a nonnegative $C^2$-function with superquadratic polynomial growth. We prove the existence of a sequence of solutions $(u_n, \lambda_n)$ with $\lambda_n \to +\infty$ such that, for any $\tau \in (0,1)$,
\[
\lim_{n \to \infty} \frac{E_\tau(u_n)}{E_1(u_n)} = 0,
\]
where $E_\rho(u) = \int_{B_\rho} \bigl( \frac{1}{2} |\nabla u|^2 + F(u) \bigr) \, dx$ is the energy over the ball of radius $\rho$. This establishes that the energy of these high-eigenvalue solutions concentrates near the boundary, extending the classical whispering gallery mode phenomenon from linear Laplacian eigenfunctions to the semilinear setting. As a direct application, the case $F(u) = u^4 /4$ yields, after suitable scaling, a sequence of high-eigenvalue boundary-concentrating solutions, providing a nonlinear analogue of whispering gallery modes for the Allen-Cahn equations. The approach combines spectral properties of the linear Laplacian with local nonlinear bifurcation and is expected to adapt to related interior localization phenomena, such as bouncing ball modes, in other geometries where linear eigenfunctions exhibit analogous localization behaviors (e.g., filled ellipses or elliptical annuli).
\end{abstract}

\section{Introduction}

The whispering gallery phenomenon \cite{rayleigh1912problem, raman1922whispering, keller1960asymptotic,NG2013}, observed by Lord Rayleigh in the dome of St. Paul's Cathedral, captures the intuition that waves can remain confined near a curved boundary, propagating by shallow grazing reflections while decaying rapidly into the interior. In spectral geometry, this translates to high-frequency eigenfunctions of the Laplacian that concentrate arbitrarily close to the boundary of a smooth convex domain, a behavior known as whispering gallery modes.

Let $\Omega$ be a bounded domain in the Euclidean space $\bR^d$ with $d \geq 2$. For the linear Dirichlet eigenvalue problem $  -\Delta u = \lambda u  $ in $\Omega$ with $  u=0  $ on $  \partial \Omega  $, whispering gallery modes arise along a sequence of eigenvalues $  \lambda \to +\infty  $ for which the associated $  L^2  $-normalized eigenfunctions $  u_\lambda  $ satisfy, for any $  \tau > 0  $,
\begin{align*}
    \int_{\{ \dist(x,\partial\Omega) > \tau \}} |u_\lambda|^2 \, dx \to 0 \quad \text{as} \quad \lambda \to +\infty.
\end{align*}
This behavior was first rigorously established in general smooth strictly convex domains via semiclassical methods by Lazutkin and others \cite{babich1968eigenfunctions,lazutkin1968construction, lazutkin1973existence,lazutkin2012kam}. In the unit ball, as shown by \cite{NG2013}, separation of variables yields explicit radial profiles governed by spherical Bessel functions, whose uniform asymptotics confirm exponential decay away from the boundary for eigenfunctions corresponding to large eigenvalues, providing sharp quantitative estimates.

A natural and compelling question arises: does analogous boundary localization persist for \emph{nonlinear} eigenvalue problems of the form
\begin{align}\label{eq:eigen}
    -\Delta u + f(u) = \lambda u \quad \text{in } \Omega, \qquad u = 0 \quad \text{on } \partial \Omega,
\end{align}
where $  f = F'  $ and $  F \geq 0  $ is a $  C^2  $ potential with superlinear growth?  The present paper provides an affirmative answer by constructing, in the unit ball $  \Omega = B_1 \subseteq \mathbb{R}^d  $ ($  d \geq 2  $) with Dirichlet boundary conditions, a sequence of solutions to~\eqref{eq:eigen} whose energy localizes near the boundary as $  \lambda \to +\infty  $. This provides a genuine nonlinear analogue of whispering gallery modes and offers a mathematical explanation for the experimental observations reported in~\cite{harayama1999nonlinear}.

We pause to note that~\eqref{eq:eigen} is the Euler-Lagrange equation for critical points of the energy functional
$$E_\Omega(u) \coloneqq \int_\Omega \left( \frac{1}{2} |\nabla u(x)|^2 + F(u(x)) \right) \, dx$$
subject to $  L^2  $-mass constraints (with $  \lambda  $ serving as the Lagrange multiplier). Equations of this form arise in diverse contexts, including models of nonlinear scalar fields, reaction-diffusion systems, and phase transitions. For instance, when $  F(u) = u^4/4  $, a suitable rescaling of~\eqref{eq:eigen} recovers a form of the Allen--Cahn equation. The complex-valued counterpart of~\eqref{eq:eigen} is the Gross--Pitaevskii eigenvalue problem \cite{chen2025fully,chen2024convergence,atre2006class,rogel2013gross}, widely employed in quantum physics to describe Bose--Einstein condensation \cite{bose1924plancks,dalfovo1999theory,einstein2005quantentheorie}. Related boundary concentration phenomena in scaled versions of the Allen--Cahn equation (corresponding to $  \lambda \to +\infty  $) under Neumann boundary conditions were investigated in~\cite{malchiodi2007boundary,malchiodi2007boundary2,duan2021clustering,du2010interior}. In contrast, the regime $  \lambda \to -\infty  $ with Neumann conditions, which typically gives rise to interior or boundary spike solutions, has been far more extensively studied; see, e.g.,~\cite{malchiodi2005multiple, ni1995location, ni2006positive, lin2007number, ni1998location} and the references therein.
The present work complements these results by establishing continuous boundary concentration in the Dirichlet setting as $  \lambda \to +\infty  $, yielding solutions distinct from the discrete clustered-layer or spike morphologies prevalent in the Neumann literature.

Our main theorem is the following \Cref{thm:main whispering gallery}.

\begin{theorem}\label{thm:main whispering gallery}
    Let $\Omega = B_1$ be the unit ball in $\bR^d$ with $d \geq 2$. Assume that $F$ satisfies \Cref{a:superlinearity} and the Banach space $V$ is defined in \Cref{def:space V}. There is a family of solution pairs $\{ (u_{n},\lambda_n)\} \subseteq V \times \bR$ to \eqref{eq:eigen}, such that $
    \lim_{n \to \infty} \lambda_n =\infty$, and for any $\tau \in (0,1)$,
        \begin{align}
            \lim_{n \to \infty} \frac{E_\tau(u_n)}{E_1(u_n)} = 0,
        \end{align}
    where we let $B_{\tau}$ be the ball of radius $\tau$ and
        \begin{align}
            E_\tau(u) \coloneqq \int_{B_\tau} \frac{1}{2} |\nabla u(x)|^2 + F(u(x)) \ \de x .
        \end{align}
\end{theorem}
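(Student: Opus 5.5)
The plan is to combine explicit whispering gallery eigenfunctions of the linear Dirichlet Laplacian on $B_1$ with a local bifurcation from simple eigenvalues (Crandall--Rabinowitz type), performed inside the Banach space $V$ of \Cref{def:space V}. I expect $V$ is chosen as a symmetry-reduced space, for instance functions of the form $u(x)=g(r)\cos(\ell\theta)$-type equivariant under a subgroup of $O(d)$. The point of such a choice is that a prescribed high-angular-momentum eigenvalue becomes simple in $V$.

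\textbf{Step 1 (linear input).} For each $n$, fix an angular mode $\ell_n\to\infty$ and the first radial zero, giving the eigenpair
\[
\phi_n = c_n\, r^{1-d/2} J_{\ell_n + d/2 -1}\bigl(j_{\ell_n+d/2-1,1}\, r\bigr)\, Y_{\ell_n}, \qquad \mu_n = j_{\ell_n+d/2-1,1}^2 .
\]
Using the uniform (Olver/Airy) asymptotics of Bessel functions, as in \cite{NG2013}, I will show that $\phi_n$ and $\nabla\phi_n$ decay exponentially in $\ell_n$ on $B_\tau$. Since $j_{\nu,1}\sim \nu + c\,\nu^{1/3}$, the turning point $r=\nu/j_{\nu,1}$ tends to $1$. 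Hence, for each fixed $\tau<1$,
\[
\frac{\int_{B_\tau} |\nabla\phi_n|^2}{\int_{B_1} |\nabla\phi_n|^2} \to 0 .
\]
I will also check that $\mu_n$ is a simple eigenvalue of $-\Delta$ restricted to $V$, which is where the symmetry built into $V$ is used.

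\textbf{Step 2 (nonlinear bifurcation).} Write $u=s(\phi_n+w)$ with $w\perp\phi_n$ and $\lambda=\mu_n+\sigma$. By \Cref{a:superlinearity} we have $f(0)=f'(0)=0$, and $f$ has superlinear growth with polynomial bounds. Consequently the map $(s,w,\sigma)\mapsto s^{-1}\bigl(-\Delta u+f(u)-\lambda u\bigr)$ is $C^1$ near $(0,0,\mu_n)$ on $V$; for this we need $V\hookrightarrow L^\infty$, or a growth-compatible setting. Applying Lyapunov--Schmidt reduction, or the Crandall--Rabinowitz theorem directly, produces a branch $(u_{n,s},\lambda_{n,s})$ with
\[
\|u_{n,s}-s\phi_n\|_V = o(s), \qquad \lambda_{n,s}\to\mu_n \text{ as } s\to 0 .
\]

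\textbf{Step 3 (diagonal choice).} For each $n$, pick $s_n>0$ so small that the error satisfies
\[
\|u_{n,s_n}/s_n-\phi_n\|_{H^1}\le \varepsilon_n \|\phi_n\|_{H^1}, \qquad \varepsilon_n\to0,
\]
and so that $\int F(u)\le C\|u\|_{L^\infty}^{q}\cdot|B_1|$ is negligible compared with $s_n^2\|\nabla\phi_n\|^2$. This is possible because $F(u)=O(u^{2+})$ near zero, which follows from $F\ge0$, $F(0)=F'(0)=0$ and superquadraticity. Then $E_\tau(u_n)/E_1(u_n)$ is, up to $o(1)$, the linear Dirichlet-energy ratio from Step 1, which tends to $0$. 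We also have $\lambda_n\approx\mu_n\to\infty$.

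\textbf{Main obstacle.} The hardest part is Step 1 combined with the simplicity requirement. Bessel zeros $j_{\nu,k}$ can coincide across different $(\nu,k)$ only in restricted ways (Bourget-type results), but in $d\ge3$ the multiplicity of spherical harmonics has to be killed by symmetry. My first approach is to restrict $V$ to functions invariant under $O(d-1)$ acting on the last coordinates, together with a parity condition, so that the harmonic part is a single Gegenbauer/zonal-type mode. Alternatively, I would impose equivariance of the form $u(R_\theta x)=u(x)$ for rotations by $2\pi/\ell_n$ in a plane, with $\ell_n$ chosen prime-like so that only the target mode survives among eigenvalues near $\mu_n$. Everything else is a perturbative $o(s)$ argument at fixed $n$, which is routine once simplicity is in hand.
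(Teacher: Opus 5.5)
\textbf{Genuine gap: simplicity of $\mu_n$.} Your overall architecture is sound and differs from the paper's: small-amplitude bifurcation from a whispering-gallery eigenvalue, followed by a diagonal choice of amplitudes. But the step everything hinges on, simplicity of $\mu_n$, fails in the space the statement prescribes, and neither of your proposed fixes delivers it as stated.

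The $V$ of \Cref{def:space V} is just $H_0^1(\Omega)$ (or $H_0^1(\Omega)\cap L^q(\Omega)$), with no symmetry. There $\mu_n=j_{\nu,1}^2$ has multiplicity at least $N_n\ge 2$, since every degree-$\ell_n$ spherical harmonic contributes. So Crandall--Rabinowitz cannot be applied in $V$, and your Step 2 fails as written.

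The paper never needs simplicity. After rescaling $\widetilde u=Mu$, it performs a Lyapunov--Schmidt reduction onto the whole eigenspace $K_\Lambda$ and solves the complementary equation by contraction. With $\lambda=\Lambda+2\eta M^{2-p_1}$, it obtains a critical point of the reduced functional $\widetilde J=\widetilde G+\widetilde R$ on the finite-dimensional $K_\Lambda$ as an interior minimizer in the shell $\varepsilon_\Lambda/2<\|w\|_2<1/2$. Multiplicity is then harmless: every element of $K_{\Lambda_n}$ has the form $r^{-D}J_\nu(j_\nu r)Y(\theta)$ with one common radial factor, so the estimates of \Cref{thm:main whispering gallery linear eigen} hold uniformly over the eigenspace.

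\textbf{How to repair your route.} You need two things.
\begin{enumerate}
\item Work in a closed subspace of $G$-equivariant functions and invoke Palais' principle of symmetric criticality. This needs the energy to be invariant under isometries and, for sign-twisted actions, under $u\mapsto -u$, which holds because $F$ is even.
\item Actually prove simplicity in that subspace.
\end{enumerate}
Your zonal ($O(d-1)$-invariant) class leaves one harmonic per degree. You would still need $j_{\ell+D,1}\neq j_{\ell'+D,k}$ for $\ell'<\ell$, $k\ge 2$, which is a Bourget--Siegel-type no-common-zeros theorem that must be cited. (The paper tacitly uses the same fact when it writes $K_{\Lambda_n}=\mathrm{Span}\{w_{n,m}\}$.) A parity condition only removes half of the lower degrees.

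Invariance under rotation by $2\pi/\ell_n$ is worse. It keeps both $\cos(\ell_n\theta)$ and $\sin(\ell_n\theta)$ and all radial modes, and primality of $\ell_n$ plays no role.

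A choice that works without number theory is the following. Require $u(R_{\pi/\ell}x)=-u(x)$ for the rotation $R_{\pi/\ell}$ in the $(x_1,x_2)$-plane, and invariance under $x_2\mapsto -x_2$; this is admissible because $f$ is odd. Admissible harmonics then have degree $\ge\ell$, and the only one of degree $\ell$ is $\mathrm{Re}(x_1+ix_2)^\ell$. Hence $j_{\ell+D,1}^2$ is the lowest, and a simple, eigenvalue of the restricted problem.

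\textbf{Two smaller points.}
\begin{itemize}
\item $V\not\hookrightarrow L^\infty$. Set up Crandall--Rabinowitz for $u\mapsto u-i^{\ast}(\lambda u-f(u))$ on $V$, or on $C_0(\overline{B_1})$ followed by elliptic regularity.
\item For $2<p_1\le 3$, the blown-up map $s^{-1}(\cdots)$ is only continuous in $s$, not $C^1$. Crandall--Rabinowitz tolerates this.
\end{itemize}

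With these repairs your qualitative diagonal Step 3 goes through. It is softer than the paper's argument, which yields the explicit form $u_\lambda=(\lambda-\Lambda)^{1/(p_1-2)}(w_\lambda+\phi_\lambda)$ and a bifurcation result valid on any $C^1$ domain and for any multiplicity.
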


The proof of \Cref{thm:main whispering gallery} combines global spectral properties of the linear Laplacian with local nonlinear bifurcation, formalized in the following two auxiliary theorems. Although the present work focuses on real-valued functions, our methods extend directly to complex-valued settings, including the Gross--Pitaevskii eigenvalue problem.

\begin{theorem}\label{thm:main bifurcation}
    Adopt \Cref{a:superlinearity}, \Cref{def:space V}, and assume that the boundary of $\Omega$ is $C^1$-smooth. Let $\Lambda$ be a Dirichlet Laplacian eigenvalue of $\Omega$. There are positive constants $\delta_{\Lambda} ,\gamma_{\Lambda}$ depending on $\Lambda,\Omega,p_1,p_k,b,b_0$, such that for any $\lambda \in [\Lambda, \Lambda+\delta_{\Lambda}]$, there is a pair $(u_{\lambda}, \lambda) \in V \times \bR$ solving \eqref{eq:eigen}, satisfying
        \begin{enumerate}
            \item $u_{\lambda} = (\lambda - \Lambda)^{\frac{1}{p_1 - 2}} \left( w_{\lambda}+ \phi_{\lambda}\right)$ for two functions $w_{\lambda}, \phi_{\lambda}$ in $V$.
            \item $\Delta w_{\lambda} + \Lambda w_{\lambda} = 0$, $\|w_{\lambda}\|_2 \in [ \alpha \Lambda^{-\kappa},\beta] $, where $\alpha>0,\beta>0$ depends on $\Omega,p_1,p_k,b,b_0$ and $\kappa>0$ depends on $d,p_1,p_k$.
            \item $\phi_{\lambda}$ is orthogonal to $w_{\lambda}$ in $H_0 ^1 (\Omega)$,  and $\|\phi_{\lambda}\|_V \leq \gamma_{\Lambda} (\lambda - \Lambda)$.
        \end{enumerate}

\end{theorem}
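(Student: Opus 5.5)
The plan is a Lyapunov--Schmidt reduction around the eigenspace $E_\Lambda=\ker(\Delta+\Lambda)\subseteq H_0^1(\Omega)$, which is finite dimensional, after a rescaling adapted to the leading homogeneity $p_1$ of $f$ from \Cref{a:superlinearity}. Put $\varepsilon=(\lambda-\Lambda)^{\frac{1}{p_1-2}}$ and seek $u=\varepsilon v$ with $v=w+\phi$, where $w\in E_\Lambda$ and $\phi\perp E_\Lambda$ in $H_0^1$. Write $f(s)=g(s)+r(s)$, where $g$ is the lowest-order (degree $p_1-1$) part and $r$ collects the higher powers up to $p_k-1$. Then
\[
\varepsilon^{-1}f(\varepsilon v)=(\lambda-\Lambda)\bigl(g(v)+\varepsilon^{-(p_1-2)}\varepsilon^{-1}r(\varepsilon v)\bigr)=(\lambda-\Lambda)\bigl(g(v)+R_\varepsilon(v)\bigr),
\]
with $R_\varepsilon(v)\to 0$ in $V^*$ as $\varepsilon\to0$, uniformly on bounded sets of $V$, since $r$ has strictly higher degree. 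Dividing \eqref{eq:eigen} by $\varepsilon$ gives
\[
-\Delta v-\Lambda v=(\lambda-\Lambda)\bigl(v-g(v)-R_\varepsilon(v)\bigr).
\]
Let $P$ be the projection onto $E_\Lambda$ and $Q=I-P$.

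\textbf{Range equation.} On $E_\Lambda^\perp$ the operator $-\Delta-\Lambda$ is invertible with bounded inverse $L^{-1}$, whose norm depends on the spectral gap at $\Lambda$, hence on $\Lambda$ and $\Omega$. The range equation reads
\[
\phi=(\lambda-\Lambda)L^{-1}Q\bigl(w+\phi-g(w+\phi)-R_\varepsilon(w+\phi)\bigr).
\]
For $w$ in a fixed bounded set of $E_\Lambda$ and $\lambda-\Lambda\le\delta_\Lambda$ small, the right-hand side is a contraction on a ball of $E_\Lambda^\perp\cap V$. Two facts make this work: $f$ is $C^1$ with polynomial growth, and elements of $E_\Lambda$ are smooth. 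The contraction yields a $C^1$ map $w\mapsto\phi(w)$ with $\|\phi(w)\|_V\le\gamma_\Lambda(\lambda-\Lambda)$, which is item (3). A point needing care is that the nonlinearity maps $V$ into the dual space where $L^{-1}$ gains regularity back into $V$. I would rely on the definition of $V$ in \Cref{def:space V}, presumably $H_0^1\cap L^{p_k}$ or a similar space, together with elliptic regularity.

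\textbf{Kernel equation.} The remaining equation $P\bigl(w+\phi-g(w+\phi)-R_\varepsilon\bigr)=0$ is, by the variational structure, the critical point equation on $E_\Lambda$ of the reduced functional
\[
J_\varepsilon(w)=\tfrac12\|w\|_2^2-\int_\Omega G(w)\,dx+o(1)_{C^1},
\]
where $G'=g$. The functional $J_0$ behaves as follows. It is positive near $0$ with a strict local minimum there, since $G$ is $p_1$-homogeneous with $p_1>2$. By the lower bound $F\ge b_0|u|^{p_1}$, $G(w)>0$ for $w\ne0$, so $J_0\to-\infty$ at infinity on the finite-dimensional space $E_\Lambda$. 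Hence $J_0$ attains a positive global maximum on an annulus $\{a\le\|w\|_2\le A\}$, with $\max J_0$ strictly larger than its values on the boundary spheres. A $C^1$-small perturbation preserves this configuration, so $J_\varepsilon$ has an interior maximizer $w_\lambda$. This needs no nondegeneracy, and it gives items (1)--(3) once $\delta_\Lambda$ is shrunk.

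\textbf{Norm bounds on $w_\lambda$.} Testing the critical point equation with $w$ gives $\|w\|_2^2\approx p_1\int G(w)\sim\|w\|_{p_1}^{p_1}$. For the upper bound, Hölder gives $\|w\|_{p_1}^{p_1}\ge c|\Omega|^{1-p_1/2}\|w\|_2^{p_1}$, so $\|w\|_2\le\beta$ independently of $\Lambda$. For the lower bound, use an $L^\infty$ eigenfunction bound on $E_\Lambda$, $\|w\|_\infty\le C\Lambda^{\kappa_0}\|w\|_2$, obtained for instance by Sobolev embedding and elliptic iteration, with $\kappa_0$ depending on $d$. It yields $\|w\|_2^2\le C\Lambda^{\kappa_0(p_1-2)}\|w\|_2^{p_1}$, hence $\|w\|_2\ge\alpha\Lambda^{-\kappa}$. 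The higher-order terms up to $p_k$ may enter the exponent, which explains the dependence of $\kappa$ on $p_k$.

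\textbf{Main obstacle.} I expect the hardest step to be making the reduction quantitatively uniform. The error $R_\varepsilon$ and the $\phi$-corrections must be $C^1$-small on the annulus compared with the gap between $\max J_0$ and its boundary values, and this comparison fixes $\delta_\Lambda$. To handle it, I would first establish the contraction estimates in $V$ with explicit $\Lambda$-dependence, and then bound $\|J_\varepsilon-J_0\|_{C^1(\text{annulus})}\lesssim \varepsilon+(\lambda-\Lambda)$.
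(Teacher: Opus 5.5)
Your proposal is correct and is essentially the paper's proof. Your scaling $u=(\lambda-\Lambda)^{\frac{1}{p_1-2}}v$ is the paper's $\widetilde u = Mu$ with $M=(2\eta/(\lambda-\Lambda))^{\frac{1}{p_1-2}}$, up to the constant $(2\eta)^{\frac{1}{p_1-2}}$, and your interior maximum of $J_\varepsilon$ on an annulus is the paper's interior minimum of $\widetilde J=\widetilde G+\widetilde R$ up to a negative factor. The range equation is solved by the same contraction on $K_\Lambda^\perp\cap V$, but the mapping property you need in the supercritical case is not a statement about $V^\ast$. It is that the nonlinearity sends bounded sets of $V$ into $L^{\frac{2d}{d+2}}\cap L^{\frac{dq}{d+2q}}$, which $i^\ast$ maps back into $V$ by \Cref{lem:s norm inequality 1} with $q=\frac{d(p_k-2)}{2}$; your guess $V=H_0^1\cap L^{p_k}$ would not close.

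The only real variation is in item (2). You keep just the $b_1|s|^{p_1}$ term in the limit functional and then obtain $\|w_\lambda\|_2\in[\alpha\Lambda^{-\kappa},\beta]$ from the Nehari-type identity (testing with $w$) plus an $L^\infty$ eigenfunction bound. The paper instead fixes annulus radii of order $\varepsilon_\Lambda\sim\Lambda^{-dp_k/(p_1-2)}$ and $\tfrac12$ in advance and reads the bounds off from them. Your route even yields an exponent $\kappa$ depending only on $d$.
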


The proofs for \Cref{thm:main bifurcation} adopt the technique of the bifurcation theory and Lyapunov-Schmidt reduction, widely used in consturction solutions to semilinear PDEs. See for example \cite{mugnai2005exact, malchiodi2005multiple, malchiodi2007boundary, crandall1973bifurcation,crandall1971bifurcation,correia2023note}.

The second key ingredient is a refined version of the classical whispering gallery phenomenon on unit balls for linear Laplacian eigenfunctions~\cite{rayleigh1912problem, raman1922whispering, keller1960asymptotic, NG2013, grebenkov2013geometrical,thanh2012localization}, adapted here to serve the nonlinear setting. In particular, the $L^p$ estimate in~\eqref{eq:main linear eigen 1} is contained in the proof of Theorem~2.1 in~\cite{NG2013}; our proof follows similar strategies, with ideas also drawn from~\cite{kuperman2026clamped}.

\begin{theorem}\label{thm:main whispering gallery linear eigen}
    Let $\Omega = B_1$ be the unit ball in $\bR^d$ with $d \geq 2$. There exists a sequence of Dirichlet Laplacian eigenvalues $\{\Lambda_n\}_{n=1} ^{\infty}$ of $\Omega$ such that the following holds: let $\{w_{\Lambda_n}\}_{n=1} ^{\infty}$ be any corresponding $L^2$-normalized eigenfunctions, that is, $w_{\Lambda_n} \in H_0 ^1 (\Omega)$, satisfying $\Delta w_{\Lambda_n} + \Lambda_n w_{\Lambda_n} = 0$ and $\|w_{\Lambda_n}\|_2 = 1$. Then, for any $p \geq 2$,
        \begin{align}\label{eq:main linear eigen 1}
            \int_{B_{\tau_{\Lambda_n}}} |\nabla w_{\Lambda_n}|^2 \de x \leq 2^{-\frac{1}{5} \Lambda_n ^{1/6}}, \quad  \int_{B_{\tau_{\Lambda_n}}} | w_{\Lambda_n}|^p \de x \leq 2^{-\frac{1}{10} \Lambda_n ^{1/6} \cdot p},
        \end{align}
    when $  \Lambda_n > C  $, where $  C = C(d) > 0  $ is a constant depending only on $  d  $. Here, $\tau_{\Lambda} \coloneqq 1- 2 \Lambda^{-1/6}$.
\end{theorem}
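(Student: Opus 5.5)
Since the theorem only asserts the existence of \emph{some} sequence of eigenvalues $\Lambda_n$, we choose it so that every eigenfunction in the corresponding eigenspace is a whispering gallery mode. In polar coordinates the Dirichlet eigenfunctions of $B_1$ are $r^{1-d/2} J_{\nu}(j_{\nu,k} r)\, Y_\ell(\theta)$ with $\nu = \ell + \frac{d-2}{2}$, where $Y_\ell$ is a spherical harmonic of degree $\ell$ and $j_{\nu,k}$ is the $k$-th positive zero of $J_\nu$, so that $\Lambda = j_{\nu,k}^2$. We take $\Lambda_n = j_{\nu_n,1}^2$, the first zero for degree $\ell_n$ with $\ell_n \to \infty$. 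One complication is multiplicity: the same number $\Lambda$ might coincide with $j_{\nu',k}^2$ for some other pair $(\nu',k)$ with $k\ge 2$. We would handle this through the eigenspace decomposition. Every eigenfunction for $\Lambda_n$ is an $L^2$-orthogonal sum of separated modes, and it suffices to show that each such mode satisfies the bounds with room to spare. Indeed, $\int_{B_\tau}|\nabla w|^2$ for the sum is controlled by the number of modes (polynomial in $\Lambda$) times the maximal individual bound, since the $L^2(B_\tau)$ pieces of different angular degrees are orthogonal. For $L^p$ we would use the $L^\infty$ bound described below. So we need estimates uniform over \emph{all} modes $(\nu',k)$ with $j_{\nu',k}^2 = \Lambda_n$. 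Since $j_{\nu',k} > \nu'$, each such mode has $\nu' < \sqrt{\Lambda_n}$, while $j_{\nu',k}$ exceeds $\nu'$ by a controlled amount. The cleanest route may be to choose the $\Lambda_n$ so that any coincidences also satisfy $\sqrt{\Lambda} - \nu' \lesssim \Lambda^{1/6}$, or else to avoid coincidences altogether. The most robust option is the McMahon/Olver-type expansion $j_{\nu,1} = \nu + c\,\nu^{1/3} + O(\nu^{-1/3})$.

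The core estimate is as follows. For a mode with $\mu := \sqrt{\Lambda} \ge \nu$ and $\mu - \nu \le C\nu^{1/3}$, the radial profile $R(r) = J_\nu(\mu r)$ satisfies the Bessel ODE. In the region $\mu r < \nu - \nu^{1/3}\cdot(\text{large})$ we are in the classically forbidden zone, where $J_\nu$ is positive, increasing and log-convex. Using the monotonicity of $J_\nu(x)/x^\nu$-type quantities, or the Debye/Liouville–Green bound $J_\nu(\nu\,\mathrm{sech}\,\alpha) \le e^{-\nu(\alpha - \tanh\alpha)}/\sqrt{2\pi\nu\tanh\alpha}$, we compare $R$ at radius $r \le \tau_\Lambda = 1 - 2\Lambda^{-1/6}$ with its values on the outer shell $[1-\Lambda^{-1/6}, 1]$. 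Since $\mu\tau_\Lambda \le \nu + C\nu^{1/3} - 2\mu^{2/3} \le \nu - \mu^{2/3}$, we get $\mathrm{sech}\,\alpha \le 1 - c\,\nu^{-1/3}$, hence $\alpha \gtrsim \nu^{-1/6}$ and $\nu(\alpha - \tanh\alpha) \gtrsim \nu\alpha^3 \gtrsim \nu^{1/2}$. This gives decay like $e^{-c\Lambda^{1/4}}$, which is more than enough to beat $2^{-\Lambda^{1/6}/5}$. Meanwhile, the mass in the outer shell is at least a polynomial fraction of the total, by the Airy-regime lower bound $J_\nu(x) \gtrsim \nu^{-1/3}$ near the turning point. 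The gradient is handled the same way: $|\nabla w|^2 = |R'|^2 |Y|^2 + r^{-2}R^2|\nabla_\theta Y|^2$, and in the forbidden region $R' \le (\nu/r)R$ by the Bessel ODE or the recurrence. This costs only polynomial factors in $\Lambda$, which the exponential absorbs.

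For $L^p$, we would bound $\|w\|_{L^\infty(B_{\tau})}$ pointwise. Combining the radial decay above with $\|Y_\ell\|_\infty \le C\ell^{(d-2)/2}\|Y_\ell\|_{L^2(S^{d-1})}$ and the normalization $\|w\|_2 = 1$ yields $|w| \le \Lambda^{C}e^{-c\Lambda^{1/4}} \le 2^{-\Lambda^{1/6}/10}$ on $B_\tau$ for large $\Lambda$. Raising this to the $p$-th power and using $|B_\tau| \le |B_1| \le 2^{\cdots}$ gives the second inequality for all $p \ge 2$.

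The main obstacle is the uniformity over the eigenspace. Any mode $(\nu',k)$ sharing the eigenvalue with $k$ large has its turning point $\nu'/\mu$ far inside the ball, and then the decay fails. We would first try to avoid this by choosing $\ell_n$ so that $j_{\nu_n,1}^2$ is a simple radial level. Bourget-type results (Siegel) show that distinct $J_\nu$ and $J_{\nu+m}$ share no positive zeros for integer $\nu$, and half-integer orders can be handled similarly. For general $d$ this is delicate, so the fallback is to allow coincidences but prove that all of them are first-zero-like. This reduces to a counting argument showing that some $\ell$ in every dyadic range gives a level with no high-$k$ coincidence.
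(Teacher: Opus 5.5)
Your proposal is correct and follows the same outline as the paper. Take $\Lambda_n=j_{\nu,1}^2$ with $\nu=n+\frac d2-1\to\infty$. Use the Debye/Siegel-type inequality to make $J_\nu$ exponentially small on $[0,\nu-\nu^{2/3}]\supset[0,j_{\nu,1}\tau_{\Lambda_n}]$. Then check that $L^2$-normalizing costs only a polynomial factor.

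The implementation differs. The paper works with integrated identities:
\begin{itemize}
\item Lommel's integral gives $\int_{B_s}|\nabla w|^2$ in closed form in terms of $J_\nu(j_\nu s)$ and $J_\nu'(j_\nu s)$. So the gradient bound needs decay of $J_\nu$ and $J_\nu'$ only at the single point $\nu-\nu^{2/3}$.
\item The same identity at $s=1$ gives the exact mass $\frac12 J_\nu'(j_\nu)^2\asymp\nu^{-4/3}$.
\item The $L^p$ bound uses monotonicity of $J_\nu$ below $j_\nu'$ plus a Sogge-type $L^p$ bound on spherical harmonics.
\end{itemize}
You argue pointwise instead:
\begin{itemize}
\item $0\le J_\nu'(x)\le\frac{\nu}{x}J_\nu(x)$ for $x<\nu$ reduces the gradient to $O(\nu^2)\int_0^{j_\nu\tau_{\Lambda_n}}x^{-1}J_\nu(x)^2\,\de x$, using $d-1-2D=1$.
\item An Airy-regime lower bound replaces the exact mass formula.
\item $\|Y\|_\infty\lesssim\ell^{(d-2)/2}\|Y\|_{L^2(\bS^{d-1})}$ gives the $L^p$ bound uniformly in $p$ at once.
\end{itemize}

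Your route exposes the sharp rate $e^{-c\Lambda^{1/4}}$. The paper's $2^{-c\Lambda^{1/6}}$ comes from Kr\"oger's non-optimal simplification of the same inequality. Your route is also less tied to special identities. The paper's route gets the normalization for free. Your Airy lower bound must hold on an $x$-interval of length $\sim\nu^{1/3}$, not just at $j_\nu'$; Lommel's identity is the quickest way to supply it.

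You are also more careful than the paper about multiplicity. The paper simply asserts that $K_{\Lambda_n}$ is spanned by the modes $r^{-D}J_\nu(j_\nu r)Y_{n,m}$, and that assertion is precisely Bourget's hypothesis. Since $\nu\in\frac12\bZ$ in every dimension, two results settle it for all $d$:
\begin{itemize}
\item Siegel's theorem handles integer orders.
\item The Lindemann--Weierstrass theorem handles half-integer orders, where $J_{n+1/2}$ is elementary.
\end{itemize}
So your counting fallback is not needed.

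Two minor slips, neither affecting the argument:
\begin{itemize}
\item The radial profile is $r^{-D}J_\nu(\mu r)$, not $J_\nu(\mu r)$. So for $d\ge3$ your pointwise bound near $r=0$ should use $|J_\nu(x)|\le(x/2)^\nu/\Gamma(\nu+1)$ on $\mu r\le1$, as the paper does.
\item $J_\nu$ is log-concave, not log-convex, on $(0,j_{\nu,1})$, though you never use this claim.
\end{itemize}
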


\Cref{thm:main whispering gallery linear eigen} mainly concerns boundary localization estimates for high-frequency Laplacian eigenfunctions on the ball. Analogous localization phenomena, either near the boundary or in the interior, and at low or high frequencies, have been extensively studied in more general domains \cite{babich1968eigenfunctions,lazutkin1968construction,lazutkin1973existence,lazutkin2012kam, jakobson2001geometric,sarnak2011recent,sapoval1991vibrations,even1999localizations,felix2007localization,heilman2010localized,delitsyn2012trapped,delitsyn2012exponential}. Beyond the linear Laplacian, similar behavior occurs for linear Schr\"{o}dinger operators of the form $\Delta w + V(x) w + \lambda w = 0$ with suitable potentials $V$ \cite{keller1960asymptotic}  and, more recently, for bi-Laplacian eigenfunctions of the form $\Delta^2 w - \lambda^2 w = 0$ under clamped boundary conditions at high frequencies \cite{kuperman2026clamped}.

We note that \Cref{thm:main whispering gallery} is established specifically for the unit ball $  \Omega = B_1  $, as it relies crucially on the quantitative localization estimates in \Cref{thm:main whispering gallery linear eigen}. In contrast, the local bifurcation result in \Cref{thm:main bifurcation} holds for arbitrary $  C^1  $-smooth domains. Accordingly, in more general geometries, including smooth strictly convex domains where high-frequency Laplacian eigenfunctions exhibit strong boundary concentration~\cite{keller1960asymptotic, babich1968eigenfunctions, lazutkin1968construction, lazutkin1973existence, lazutkin2012kam}, analogous nonlinear whispering gallery modes are expected to exist, contingent upon the availability of sufficiently refined quantitative estimates for the underlying linear Laplacian eigenfunctions.
Similar nonlinear localization phenomena are likewise anticipated in settings where linear eigenfunctions concentrate in the interior or display other forms of localization, such as bouncing ball modes in a filled ellipse or elliptical annulus~\cite{NG2013}(see also, e.g., \cite{jakobson2001geometric, sarnak2011recent, sapoval1991vibrations, even1999localizations, felix2007localization, heilman2010localized, delitsyn2012trapped, delitsyn2012exponential}). Therefore, our method provides a powerful tool for constructing and analyzing local concentration phenomena of semilinear equations on significantly more general domains.

To conclude the introduction, we prove \Cref{thm:main whispering gallery}, illustrating how it follows from \Cref{thm:main bifurcation} and \Cref{thm:main whispering gallery linear eigen}. The complete proofs of these two auxiliary theorems are given in \Cref{sec:proof of main bifurcation} and \Cref{sec:proof of main whispering gallery linear eigen}, respectively.

\begin{proof}[Proof of \Cref{thm:main whispering gallery}]
    Adopt the results of \Cref{thm:main whispering gallery linear eigen}, let $\Lambda$ be a Dirichlet Laplacian eigenvalue of $\Omega = B_1$ in the sequence $\{\Lambda_n\}_{n=1} ^{\infty}$ and $\Lambda > C$. Let $\gamma_{\Lambda}, \delta_{\Lambda}$ be the two constants in \Cref{thm:main bifurcation}. Set $\lambda = \Lambda + \delta  $ for some $\delta \in [0,\delta_{\Lambda}]$ to be determined later. Let $\sigma = \frac{1}{p_1 - 2}>0$. So the $u_{\lambda}$ in \Cref{thm:main bifurcation} is $  u_{\lambda} = \delta^{\sigma} (w_{\lambda} + \phi_{\lambda})  $ and $\|\phi_{\lambda}\|_V \leq \gamma_{\Lambda} \delta$. In the following, fix a $\tau \in (0,1)$, we estimate the ratio
        \begin{align}
            \frac{ E_\tau(u_{\lambda})}{\int_{B_1}  |\nabla u_{\lambda}(x)|^2  \ \de x }.
        \end{align}
    First, we estimate the denominator. We know that 
        \begin{align}
            \delta^{-2\sigma}|\nabla u_{\lambda}|^2 =  |\nabla w_{\lambda}|^2 + 2 \langle \nabla w_{\lambda},\nabla \phi_{\lambda} \rangle + |\nabla \phi_{\lambda}|^2  \geq \frac{1}{2} |\nabla w_{\lambda}|^2 - |\nabla \phi_{\lambda}|^2 ,
        \end{align}
    using the inequality $2 \langle \nabla w_{\lambda},\nabla \phi_{\lambda} \rangle \geq -\frac{1}{2} |\nabla w_{\lambda}|^2 -2|\nabla \phi_{\lambda}|^2$. Hence,
        \begin{align}
            \delta^{-2\sigma} \int_{B_1}  |\nabla u_{\lambda}(x)|^2  \ \de x \geq \frac{\Lambda}{2} \|w_{\lambda}\|_2 ^2 - (\gamma_{\Lambda} \delta)^2 \geq \frac{\alpha}{2}\Lambda^{1-\kappa} - (\gamma_{\Lambda} \delta)^2 \geq \frac{\alpha}{4}\Lambda^{1-\kappa},
        \end{align}
    where the second inequality follows from (2) of \Cref{thm:main bifurcation}, and the last inequality is true if we require that $\delta$ is small enough such that $(\gamma_{\Lambda} \delta)^2 < \frac{\alpha}{4}\Lambda^{1-\kappa}$. 

    Next, we estimate the Dirichlet energy term in the numerator. We again know that
        \begin{align}
            \delta^{-2\sigma}|\nabla u_{\lambda}|^2 \leq  2|\nabla w_{\lambda}|^2 + 2 |\nabla \phi_{\lambda}|^2.
        \end{align}
    Hence, by \Cref{thm:main whispering gallery linear eigen},
        \begin{align}
            \delta^{-2\sigma} \int_{B_{\tau}}  |\nabla u_{\lambda}(x)|^2  \ \de x \leq 2\int_{B_{\tau}}  |\nabla w_{\lambda}(x)|^2  \ \de x + 2 (\gamma_{\Lambda}\delta)^2 
                \leq 2  \cdot \|w_{\lambda}\|_2 ^2 \cdot 2^{-\frac{1}{5} \cdot \Lambda^{1/6}} + 2 (\gamma_{\Lambda}\delta)^2
                \leq 4  \beta ^2 \cdot 2^{-\frac{1}{5} \cdot \Lambda^{1/6}},
        \end{align}
    where in the last inequality, we used (2) in \Cref{thm:main bifurcation} that $\|w_{\lambda}\|_2 \leq \beta$ and require that $\delta$ is small enough such that $(\gamma_{\Lambda}\delta)^2 \leq \beta ^2 \cdot 2^{-\frac{1}{5} \cdot \Lambda^{1/6}}$.

    The remaining is the nonlinear term in the numerator. By \Cref{rem:f inequality} and the fact that $\delta^{-\sigma p} |u_{\lambda}|^p \leq 2^{p} |w_{\lambda}|^p + 2^{p} |\phi_{\lambda}|^p$ for $p \geq 2$, we have that
        \begin{align}
            \begin{split}
                \delta^{-\sigma p_1} \int_{B_{\tau}} F(u_{\lambda}) \de x &\leq C \int_{B_{\tau}}\left( |w_{\lambda}|^{p_1} + |\phi_{\lambda}|^{p_1} + \delta^{\sigma (p_k - p_1)}\left(|w_{\lambda}|^{p_k} + |\phi_{\lambda}|^{p_k}\right)\right) \de x
            \\  &\leq C \int_{B_{\tau}}\left( |w_{\lambda}|^{p_1} + |\phi_{\lambda}|^{p_1} + \left(|w_{\lambda}|^{p_k} + |\phi_{\lambda}|^{p_k}\right)\right) \de x
            \end{split}
        \end{align}
    for a constant $C$ depending on $b,p_1,p_k$, where we used the fact that $\delta^{\sigma (p_k - p_1)} \leq 1$. For the term $\int_{B_{\tau}} |\phi_{\lambda}|^{p_k} \de x$, we have that
        \begin{align}
            \int_{B_{\tau}} |\phi_{\lambda}|^{p_k} \de x \leq \|\phi_{\lambda}\|_{p_k} ^{p_k} \leq C \|\phi_{\lambda}\|_q ^{p_k} \leq C \|\phi_{\lambda}\|_V ^{p_k} \leq C (\gamma_{\Lambda} \delta)^{p_k},
        \end{align}
    where we used H\"older inequality in the second inequality and $C$ is a constant depending on $p_k$ and $d$. We can similarly get that $\int_{B_{\tau}} |\phi_{\lambda}|^{p_1} \de x \leq C (\gamma_{\Lambda} \delta)^{p_1}$. For the term $\int_{B_{\tau}} |w_{\lambda}|^{p_k} \de x$, by \Cref{thm:main whispering gallery linear eigen} and (2) in \Cref{thm:main bifurcation}, we have that
        \begin{align}
            \int_{B_{\tau}} |w_{\lambda}|^{p_k} \de x \leq 2^{-\frac{1}{10} \Lambda^{1/6} \cdot p_k} \|w_{\lambda}\|_2 ^{p_k} \leq \beta^{p_k} 2^{-\frac{1}{10} \Lambda^{1/6} \cdot p_k} \leq \beta^{p_k} 2^{-\frac{1}{5} \Lambda^{1/6} },
        \end{align}
    where the last inequality is because $p_k > 2$. Similarly, we have that $\int_{B_{\tau}} |w_{\lambda}|^{p_k} \de x \leq \beta^{p_1} 2^{-\frac{1}{5} \Lambda^{1/6} }$. Combine the above estimates, after choosing $\delta$ small enough such that $C (\gamma_{\Lambda} \delta)^{p_1} \leq \beta^{p_k} 2^{-\frac{1}{5} \Lambda^{1/6} }$, we have that
        \begin{align}
            \delta^{-\sigma p_1} \int_{B_{\tau}} F(u_{\lambda}) \de x \leq C 2^{-\frac{1}{5} \Lambda^{1/6} },
        \end{align}
    where $C$ is a constant depending on $b,b_0,p_1,p_k,d$ and is independent of $\Lambda$.

    Combine the estimates for the numerator and the denominator, we have that
        \begin{align}
            \frac{ E_\tau(u_{\lambda})}{\int_{B_1}  |\nabla u_{\lambda}(x)|^2  \ \de x } \leq C \Lambda^{\kappa - 1}(1+\delta^{\sigma(p_1 - 2)})2^{-\frac{1}{5} \Lambda^{1/6} } \leq 2C\Lambda^{\kappa - 1} 2^{-\frac{1}{5} \Lambda^{1/6} },
        \end{align}
    where $C$ is a constant depending on $b,b_0,p_1,p_k,d$ and is independent of $\Lambda$, and we used the fact that $\delta^{\sigma(p_1 - 2)} <1$ because $\delta <1$ and $p_1 >2$. Hence, after requiring that $\delta < \delta_{\Lambda} '$ for some $\delta_{\Lambda} '$ depending on the above arguments (depending on $\Lambda,d,p_1,p_k,b,b_0$), we have that 
        \begin{align}
            \frac{ E_\tau(u_{\lambda})}{\int_{B_1}  |\nabla u_{\lambda}(x)|^2  \ \de x } \leq 2C\Lambda^{\kappa - 1} 2^{-\frac{1}{5} \Lambda^{1/6} }.
        \end{align}
    Hence, we pick $\{\Lambda_n\}_{n=1} ^{\infty}$ as the sequence in \Cref{thm:main whispering gallery linear eigen}, and pick a corresponding $\delta_n < \delta_{\Lambda_n} ' $ to get a $\lambda_n =\Lambda_n + \delta_n$. The corresponding $\{u_{\lambda_n}\}_{n=1} ^{\infty}$ then satisfies that
        \begin{align}
            \frac{ E_\tau(u_{\lambda_n})}{E_1(u_{\lambda_n})} \leq \frac{ E_\tau(u_{\lambda_n})}{\frac{1}{2}\int_{B_1}  |\nabla u_{\lambda_n}(x)|^2  \ \de x } \leq 4C\Lambda_n ^{\kappa - 1} 2^{-\frac{1}{5} \Lambda_n ^{1/6} },
        \end{align}
    which goes to $0$ exponentially fast as $n \to +\infty$.
\end{proof}

\section{Preliminary}\label{sec:preliminary}
In this \Cref{sec:preliminary}, we record the assumptions on the potential $  F  $ and introduce the functional setting in which our solutions will be constructed. We impose a specific superquadratic polynomial growth condition on $  F  $, which encodes the superlinear character of the nonlinearity $  f=F'  $. We then define the natural Banach space $  V  $ adapted to this growth and dimension $  d  $, together with the associated Sobolev embeddings and interpolation inequalities that will be invoked repeatedly in the bifurcation analysis in \Cref{sec:proof of main bifurcation}.

\begin{assumption}\label{a:superlinearity}
    $F(s)$ is nonnegative and has the following form: for any $s \in \bR$,
        \begin{align}\label{eq:superlinearity}
            F(s) = b_1 |s|^{p_1} + b_2 |s|^{p_2} + \dots + b_k |s|^{p_k},
        \end{align}
    where $k \in \bZ_+$, $2<  p_1<p_2, \dots < p_k$, and $\{b_i\}_{i=1} ^{k}$ are constants in $\bR$. Also, we assume that $F(s) \geq b_0 |s|^{p_1}$ for some $b_0 > 0$.
\end{assumption}

\begin{remark}\label{rem:positivity F}
    In \Cref{a:superlinearity}, because $F(s)$ has the polynomial form \eqref{eq:superlinearity}, the extra assumption that $F(s) \geq b_0 |s|^{p_1}$ for some $b_0 >0$ is equivalent to that $F(s) > 0$ unless $s=0$. \Cref{thm:main bifurcation} can also be generalized to the case when assuming $F(s) < 0$ unless $s = 0$, following the same proof as in \Cref{sec:proof of main bifurcation}. The only necessary difference is now the choice of $\lambda$ is within $[\Lambda-\varepsilon_{\Lambda} , \Lambda]$, because of the proof of \Cref{lem:critical point for tilde G}.
\end{remark}

\begin{remark}
    One can replace \Cref{a:superlinearity} with suitable polynomial growth inequalities assumptions on $f',f,F$, that is, $|f'| \asymp s^{p}$, $|f| \asymp s^{p+1}$, $|F| \asymp s^{p+2}$, and obtain results similar to \Cref{thm:main whispering gallery} and \Cref{thm:main bifurcation}. Also, one can replace those constants $b_i$'s with bounded functions on $\Omega$, and \Cref{thm:main whispering gallery} and \Cref{thm:main bifurcation} still persist. To simplify the arguments and elucidate the main ideas clearly, we keep this current \Cref{a:superlinearity}.
\end{remark}
\begin{remark}\label{rem:f inequality}
     In the proofs, when a constant depends on the upper bounds of $|b_1|,\dots,|b_k|$, we say this constant depends on $b$. A inequality we will use frequently is the following: there are constants $C,C'$ depending on $b,p_1,p_k$, such that $|F(s)| \leq C |s|^{p_1} + C' |s|^{p_k}$, $|f(s)| = |F'(s)| \leq C |s|^{p_1 - 1} + C' |s|^{p_k-1}$, and $|f'(s)| = |F''(s)| \leq C |s|^{p_1 - 2} + C' |s|^{p_k-2}$ by interpolation.
\end{remark}

\begin{comment}
When $F(0) = 0$ but $F$ change sign, if $F$ still has the polynomial form \eqref{a:superlinearity}, then $F$ must reach a local extremum at some $s > 0$, which is also a critical point of $F$. $f = F'$ also changes sign at $s$. Hence, we make the following assumption on $F$ which is different from \Cref{a:superlinearity}.

\begin{assumption}\label{a:superlinearity critical}
    $F(s)$ has the form \eqref{a:superlinearity}. Also, we assume that $F$ has a local extremum at an $s_0 >0$ and $f$ changes sign at $s_0$. Without loss of generality, we assume that $s_0 \in (0,\frac{1}{2})$.
\end{assumption}
\end{comment}

Next, we discuss the solutions space for \eqref{eq:eigen} we work with.

\begin{definition}\label{def:space V}
    Adopt \Cref{a:superlinearity}. We define the Banach space 
    \begin{align}\label{eq:space V}
        V \coloneqq
        \begin{cases}
             H_0 ^1 (\Omega), &\text{ if } d = 2 \text{ or } d \geq 3 \text{ and } p_k \leq \frac{2d}{d-2}, 
            \\ H_0 ^1 (\Omega) \cap L^q (\Omega) , \text{ where } q = \frac{d(p_k -2)}{2}, &\text{ if } d \geq 3 \text{ and } p_k > \frac{2d}{d-2} . 
        \end{cases}
    \end{align}
The choice of $q$ is to make sure $\frac{(p_k - 1) d q}{d + 2q} = q$. Also, this $q$ satisfies that $q > \frac{2d}{d-2}$ and $q > p_k$.
     
     The corresponding norms $\| u\|_V$ for $u \in V$ are defined as 
     \begin{align}\label{eq:norm V}
        \| u \|_V \coloneqq
        \begin{cases}
             \|u\| , &\text{ if } d = 2 \text{ or } d \geq 3 \text{ and } p_k \leq \frac{2d}{d-2}, 
            \\ \|u\| + \|u\|_q, &\text{ if } d \geq 3 \text{ and } p_k > \frac{2d}{d-2} . 
        \end{cases}
    \end{align}
    Throughout this paper, we use the notations 
        \begin{align}
            \|u\| \coloneqq \left( \int_{\Omega} |\nabla u |^2 \de x\right) ^{\frac{1}{2}}, \quad \|u\|_r \coloneqq \left( \int_{\Omega} | u |^r \de x\right) ^{\frac{1}{r}}, \text{ for any }r \geq 1.
        \end{align}
\end{definition}

\begin{definition}\label{def:sobolev embedding}
    Let $\|\cdot\|$ be the norm on $H_0 ^1 (\Omega)$. When $d \geq 3$, we let $i$ be the Sobolev embedding map $i: H_0 ^1 (\Omega) \hookrightarrow L^{\frac{2d}{d-2}}(\Omega) $, and the dual map is $i^{\ast} : L^{\frac{2d}{d+2}} \to H_0 ^1 (\Omega)$; when $d = 2$, $i: H_0 ^1 (\Omega) \hookrightarrow \cap_{r > 1} L^{r}(\Omega) $, and the dual map is $i^{\ast} : \cup_{r > 1} L^{r}(\Omega) \to H_0 ^1 (\Omega)$.
\end{definition}

When $d \geq 3$, for any $v \in H_0 ^1 (\Omega)$ and $u \in L^{\frac{2d}{d+2}}$, we see that
    \begin{align}\label{eq:i ast sobolev norm}
        \int_{\Omega} \langle \nabla i^{\ast}(u) , \nabla v \rangle \de x = \int_{\Omega} u \cdot i(v) \de x \leq C \|u \|_{\frac{2d}{d+2}} \|i(v) \|_{\frac{2d}{d-2}} \leq C \|u \|_{\frac{2d}{d+2}} \|v\|,
    \end{align}
where we used the Sobolev inequality.
Thus, $\| i^{\ast}(u) \| \leq C \|u \|_{\frac{2d}{d+2}}$, and we see that $i^{\ast} = (-\Delta)^{-1}$, the inverse of the negative Laplacian operator. Here, $C$ is a constant only depending on $\Omega$. Similarly, when $d = 2$, we have that for any $r > 1$, $\| i^{\ast}(u) \| \leq C(r) \|u \|_{r}$, and $C(r)$ is a constant only depending on $\Omega,r$. Also, we have the following lemma for the case when $d \geq 3$ and $p_k > \frac{2d}{d-2}$. See for example \cite{agmon1959estimates}, Lemma 9.17 and Theorem 7.26 of \cite{GT77}, or Lemma 2.2 of \cite{mugnai2005exact}.

\begin{lemma}\label{lem:s norm inequality 1}
    Let $d \geq 3$ and $s > \frac{2d}{d-2}$. If $u \in L^{\frac{ds}{d+2s}}(\Omega) \subseteq L^{\frac{2d}{d+2}}(\Omega)$, then $i^{\ast}(u) \in L^s(\Omega)$, and there is a positive constant $C$ depending on $d,s,\Omega$, such that $\|i^{\ast}(u)\|_s \leq C \|u\|_{\frac{ds}{d+2s}}$.
\end{lemma}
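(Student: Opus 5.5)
The plan is to reduce the statement to the classical Calder\'on--Zygmund estimate for the Dirichlet problem, followed by the Sobolev embedding. Set $v = i^{\ast}(u)$. By \eqref{eq:i ast sobolev norm}, $v \in H_0^1(\Omega)$ is the unique weak solution of $-\Delta v = u$ in $\Omega$, $v=0$ on $\partial\Omega$; note $u \in L^{\frac{2d}{d+2}}$ because $\frac{ds}{d+2s} > \frac{2d}{d+2}$ when $s > \frac{2d}{d-2}$. Write $r = \frac{ds}{d+2s}$. Then $1 < r < \frac d2$, and $r$ is exactly the exponent whose second-order Sobolev conjugate is $s$:
\[
\frac1s = \frac1r - \frac2d .
\]
So the claim is $\|v\|_s \le C \|v\|_{W^{2,r}} \le C\|u\|_r$.

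\emph{Step 1 (smooth data).} First take $u \in C_c^\infty(\Omega)$, or a dense class of that kind. Then $v$ is the classical solution. By the $L^r$ theory of Agmon--Douglis--Nirenberg \cite{agmon1959estimates} (Lemma 9.17 of \cite{GT77}), we have $v \in W^{2,r}(\Omega)\cap W^{1,r}_0(\Omega)$ with $\|v\|_{W^{2,r}} \le C\|\Delta v\|_r = C\|u\|_r$, and $C$ depends only on $d,r,\Omega$.

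This is the main obstacle. Lemma 9.17 is stated for $C^{1,1}$ (or $C^2$) boundaries, while the ambient paper only assumes $C^1$. I would either restrict the application to $\Omega = B_1$ (which suffices for the main theorem) or assume enough boundary regularity.

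A fallback that avoids second derivatives is to extend $u$ by zero and compare with the Newtonian potential. Set $N u = \Gamma * u$. Hardy--Littlewood--Sobolev gives $\|N u\|_s \le C\|u\|_r$ directly, since $\Gamma \sim |x|^{2-d}$ and $\frac1s = \frac1r - \frac2d$. The difference $h = v - Nu$ is harmonic in $\Omega$ with boundary values $-Nu$. The maximum principle then gives $\|h\|_\infty \le \sup_{\partial\Omega}|Nu|$. This requires a bound on $\sup|Nu|$, which fails in general; so the fallback still needs an $L^s$ boundary estimate, and I would instead use the duality trick described next.

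\emph{Step 1 (duality alternative).} Take $g \in L^{s'}$ and solve $-\Delta z = g$. By the $W^{2,s'}$ estimate followed by Sobolev embedding, $z \in L^{r'}$ with $\|z\|_{r'} \le C\|g\|_{s'}$. This works because $\frac{1}{r'} = \frac{1}{s'} - \frac2d$, and the condition $s'<\frac d2$ holds. Then
\[
\int v g = \int u z \le \|u\|_r \|z\|_{r'} \le C\|u\|_r\|g\|_{s'} .
\]
Either way, some $L^p$ elliptic regularity is needed, and I would simply cite it.

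\emph{Step 2 (embedding).} The Sobolev embedding $W^{2,r}(\Omega) \hookrightarrow L^{s}(\Omega)$ holds by the Gagliardo--Nirenberg--Sobolev inequality applied twice, via $W^{1,r^*}$ with $r^*=\frac{dr}{d-r}$ (Theorem 7.26 of \cite{GT77}). It gives $\|v\|_s \le C\|v\|_{W^{2,r}} \le C\|u\|_r$ for smooth $u$.

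\emph{Step 3 (density).} For general $u \in L^r$, take $u_j \to u$ in $L^r$ with $u_j$ smooth. Since $L^r \subseteq L^{\frac{2d}{d+2}}$ continuously on bounded $\Omega$, the bound $\|i^\ast(u_j)-i^\ast(u)\| \le C\|u_j-u\|_{\frac{2d}{d+2}}$ shows $i^\ast(u_j) \to i^\ast(u)$ in $H_0^1$, hence in $L^2$ and, along a subsequence, almost everywhere. Step 2 shows that $i^\ast(u_j)$ is Cauchy in $L^s$. Its $L^s$ limit therefore coincides with $i^\ast(u)$, and the estimate passes to the limit.
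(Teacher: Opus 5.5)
Your main route is exactly what the paper's citations amount to: the $W^{2,r}$ estimate of Lemma 9.17 in \cite{GT77} with $r=\frac{ds}{d+2s}\in(1,\frac d2)$, then the embedding $W^{2,r}\hookrightarrow L^s$ from Theorem 7.26, then a density argument identifying the $L^s$ limit with $i^{\ast}(u)$. It is correct on $C^{1,1}$ domains such as $B_1$.

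Your concern about boundary regularity is legitimate, since the paper's citation carries the same restriction. The Newtonian-potential idea you discarded actually removes it, because you only need the sign of $\Gamma\ast u$ on $\partial\Omega$, not a sup bound. For $u\ge 0$ (extended by zero), the weak maximum principle gives $0\le i^{\ast}(u)\le \Gamma\ast u$ in $\Omega$, with $\Gamma$ the fundamental solution. Splitting $u=u^+-u^-$ then gives $\|i^{\ast}(u)\|_s\le\|\Gamma\ast|u|\|_{L^s(\bR^d)}\le C(d,s)\|u\|_{\frac{ds}{d+2s}}$ by Hardy--Littlewood--Sobolev, on any bounded domain.
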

By \Cref{lem:s norm inequality 1} and the choice of $q$ in \Cref{def:space V}, we see that for $u \in L^{q} (\Omega)$, $\|i^{\ast}(u^{p_k})\|_q \leq C \|u^{p_k}\|_{\frac{dq}{d+2q}} = C \|u\|_{q} ^{p_k}$. We will need this fact in \Cref{sec:proof of main bifurcation}.

%-------------------------------
\section{Proof of \Cref{thm:main bifurcation}}\label{sec:proof of main bifurcation}

We are going to prove \Cref{thm:main bifurcation} by perturbing a solution $w_{\lambda}$ solving $\Delta w_{\lambda} + \Lambda w_{\lambda} = 0$ using the bifurcation theory and Lyapunov-Schmidt reduction. See for example \cite{mugnai2005exact, malchiodi2005multiple, malchiodi2007boundary, crandall1973bifurcation,crandall1971bifurcation,correia2023note} for the application of bifurcation theory and Lyapunov-Schmidt reduction in constructing solutions to semilinear PDEs. The main idea is the following: if $u \in V$ is a solution to  \eqref{eq:eigen}, then $\du \coloneqq Mu \in V$ with some positive constant $M$ becomes a solution to
\begin{align}\label{eq:eigen rescaling}
        \begin{cases}
            \Delta \du + \lambda \du - M f\left(M^{-1}\du \right) = 0, &\text{ in } \Omega, 
            \\ \du=0 , &\text{ on } \pa \Omega .
        \end{cases}
\end{align}
Because $f$ is superlinear, we can increase the scale $M$ so that the term $M f\left(M^{-1}\du \right)$ in \eqref{eq:eigen rescaling} becomes negligible. Hence, we are able to obtain a solution to \eqref{eq:eigen rescaling} by perturbing a Dirichlet Laplacian eigenfunction using the bifurcation theory and Lyapunov-Schmidt reduction.

To elucidate the above ideas rigorously, we denote the Laplacian eigenspace of the eigenvalue $\Lambda$ as $K_{\Lambda}$, which has a finite dimension $N_{\Lambda} \in \bZ_+$. That is, $K_{\Lambda}$ consists of $f \in H_0 ^1 (\Omega)$ satisfying $\Delta f + \Lambda f = 0$. Because $H_0 ^1 (\Omega)$ is a Hilbert space with respect to the inner product $\langle u, v \rangle \coloneqq  \int_{\Omega} \langle \nabla u , \nabla v \rangle \de x$, we denote the orthogonal complement of $K_{\Lambda}$ in $H_0 ^1 (\Omega)$ as $K_{\Lambda} ^{\perp}$. Define $\calP: H_0 ^1 (\Omega) \to K_{\Lambda}$ as the orthogonal projection onto $K_{\Lambda}$ and $\calP^{\perp}: H_0 ^1 (\Omega) \to K_{\Lambda} ^{\perp}$ as the orthogonal projection onto $K_{\Lambda} ^{\perp}$. Notice that if we let $\{e_i\}_{i=1} ^{N_{\Lambda}}$ be an orthonormal basis of $K_{\Lambda}$, we can explicitly write $P(u) = \sum_{i=1} ^{N_\Lambda} \langle u , e_i \rangle e_i$, and $P^{\perp} (u) = u - P(u)$ for any $u \in H_0 ^1 (\Omega)$. By the definition of $\|\cdot \|_V$ in any cases in \eqref{eq:norm V}, we can use this explicit formula of $P(u)$ to obtain constants $C(\Lambda)$ depending on $\Lambda$, such that 
    \begin{align}\label{eq:P,P perp norm}
        \|P(u)\|_V \leq C(\Lambda) \|u\|_V, \quad \|P^{\perp} (u)\|_V \leq C(\Lambda) \|u\|_V .
    \end{align}
So, we can decompose $V$ in \Cref{def:space V} as $V = K_{\Lambda} \oplus (K_{\Lambda} ^{\perp} \cap V)$ and we would like to search solutions to \eqref{eq:eigen rescaling} of the form $\du = \dw_\lambda + \dphi_\lambda$ for $\dw_\lambda \in K_{\Lambda}$ and $\dphi_\lambda \in (K_{\Lambda} ^{\perp} \cap V)$. We first see that \eqref{eq:eigen rescaling} is equivalent to 
    \begin{align}\label{eq:eigen rescaling 0}
        \du - i^{\ast} \left(  \lambda \du - M f\left(M^{-1}\du \right) \right) = 0 ,
    \end{align}
which can be decomposed into
    \begin{align}\label{eq:eigen rescaling 1}
        P^{\perp} \left( \du - i^{\ast} \left(  \lambda \du - M f\left(M^{-1}\du \right) \right) \right) = 0 ,
    \end{align}
and 
    \begin{align}\label{eq:eigen rescaling 2}
        P \left( \du - i^{\ast} \left(  \lambda \du - M f\left(M^{-1}\du \right) \right) \right) = 0 .
    \end{align}
Using the decomposition $\du = \dw_\lambda + \dphi_\lambda$ for $\dw_\lambda \in K_{\Lambda}$ and $\dphi_\lambda \in (K_{\Lambda} ^{\perp} \cap V)$, and the fact that $i^{\ast} \dw_{\lambda} = (-\Delta)^{-1} \dw_{\lambda} = \Lambda^{-1} \dw_{\lambda}$ and $i^{\ast}(K_{\Lambda} ^{\perp}) \subseteq K_{\Lambda} ^{\perp}$, \eqref{eq:eigen rescaling 1} and \eqref{eq:eigen rescaling 2} are reduced to
    \begin{align}\label{eq:eigen rescaling 1 simplified}
         \dphi_{\lambda} -  \lambda i^{\ast} \dphi_{\lambda}   + M P^{\perp}  i^{\ast} \left[f\left(M^{-1}\du \right) \right] = 0 ,
    \end{align}
and 
    \begin{align}\label{eq:eigen rescaling 2 simplified}
         \left(1-\frac{\lambda}{\Lambda} \right)\dw_{\lambda}+ M P  i^{\ast} \left[f\left(M^{-1}\du \right) \right]  = 0 .
    \end{align}
In the following, we solve \eqref{eq:eigen rescaling 1 simplified} and \eqref{eq:eigen rescaling 2 simplified} respectively.

\begin{lemma}\label{lem:solve simplified 1}
    There exists a positive constant $\delta_{\Lambda}$ depending on $\Lambda,\Omega$ and a positive constant $C_{\Lambda}$ depending on $\Lambda,p_1,p_k,\Omega$, such that for any $ W > 1 $ and for any $w$ with $\|w\|_V \leq W$, if $|\lambda - \Lambda| < \delta_{\Lambda}$ and $M > C_{\Lambda} W^{\frac{p_1 - 1}{p_1-2}}$, then there exists a unique $\dphi_\lambda \in (K_{\Lambda} ^{\perp} \cap V)$ solving \eqref{eq:eigen rescaling 1 simplified} with $\du = w + \dphi_\lambda$, and satisfying $\|\dphi_{\lambda}\|_V \leq M^{2-p_1} C_{\Lambda} ^{p_1 - 2} W^{p_1 -1} $. This $\dphi_{\lambda}$ is $C^1$-differentiable with respect to $w \in K_{\Lambda}$ and we denote it as $\dphi_{\lambda}(w)$.
\end{lemma}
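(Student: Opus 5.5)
We will solve \eqref{eq:eigen rescaling 1 simplified} by a contraction mapping argument on a small ball in $K_{\Lambda}^{\perp}\cap V$. The linear operator $L_\lambda \coloneqq \mathrm{Id} - \lambda i^{\ast}$ restricted to $K_{\Lambda}^{\perp}$ is the starting point. Since $i^{\ast}=(-\Delta)^{-1}$ is compact and self-adjoint on $H_0^1(\Omega)$, with eigenvalues $\Lambda_j^{-1}$, the operator $L_\lambda$ acts diagonally on the eigenbasis with multipliers $1-\lambda/\Lambda_j$.

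\textbf{Step 1: invertibility of $L_\lambda$.} On $K_\Lambda^\perp$ all $\Lambda_j\neq\Lambda$. If $|\lambda-\Lambda|<\delta_\Lambda \coloneqq \tfrac12\dist(\Lambda,\sigma(-\Delta)\setminus\{\Lambda\})$, the multipliers are bounded below by a constant depending on $\Lambda,\Omega$. Hence $\|L_\lambda^{-1}\|_{H_0^1\to H_0^1}\le C(\Lambda)$. In the case $V=H_0^1\cap L^q$ we also need an $L^q$ bound. Write $L_\lambda^{-1}g = g + \lambda i^{\ast} L_\lambda^{-1} g$ (from $h-\lambda i^{\ast}h=g$). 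Then \Cref{lem:s norm inequality 1} with a bootstrap through finitely many Sobolev exponents gives $\|i^{\ast}h\|_q\le C\|h\|$, since $i^\ast$ improves integrability. This yields $\|L_\lambda^{-1}g\|_V\le C(\Lambda)\|g\|_V$. The bootstrap for the $L^q$ bound is the most delicate step. Alternatively, one can bound $\|i^\ast h\|_q$ by elliptic regularity of $(-\Delta)^{-1}$ on $L^2\supset H_0^1$ iterated.

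\textbf{Step 2: fixed-point map.} Define
\[
T(\phi)\coloneqq -M L_\lambda^{-1}P^{\perp} i^{\ast}\!\left[f\!\left(M^{-1}(w+\phi)\right)\right]
\]
on $B_R=\{\phi\in K_\Lambda^\perp\cap V:\|\phi\|_V\le R\}$ with $R\le W$.

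\emph{Size bound.} By \Cref{rem:f inequality},
\[
|M f(M^{-1}u)|\le C M^{2-p_1}|u|^{p_1-1}+C M^{2-p_k}|u|^{p_k-1}.
\]
The $|u|^{p_k-1}$ term is controlled via \Cref{lem:s norm inequality 1} and the choice of $q$, giving $\|i^\ast(|u|^{p_k-1})\|_V\le C\|u\|_V^{p_k-1}$. The $p_1$ term is controlled by Sobolev/Hölder, using $\| |u|^{p_1-1}\|_{2d/(d+2)}\lesssim \|u\|_V^{p_1-1}$ by interpolation between $L^{2d/(d-2)}$ and $L^q$. Combined with \eqref{eq:P,P perp norm}, and using $M\ge1$ with $p_k>p_1$ so that $M^{2-p_k}(2W)^{p_k-1}\le M^{2-p_1}(2W)^{p_1-1}$ once $M\gtrsim W$, we get
\[
\|T\phi\|_V\le C(\Lambda)M^{2-p_1}W^{p_1-1}.
\]
This uses $\|w+\phi\|_V\le 2W$. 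The condition $M\gtrsim W$ is implied by $M>C_\Lambda W^{(p_1-1)/(p_1-2)}$, since $W>1$.

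\emph{Contraction.} Similarly, $|f'(s)|\le C|s|^{p_1-2}+C'|s|^{p_k-2}$ gives
\[
\|T\phi_1-T\phi_2\|_V\le C(\Lambda)\bigl(M^{2-p_1}W^{p_1-2}+M^{2-p_k}W^{p_k-2}\bigr)\|\phi_1-\phi_2\|_V .
\]
Note that the factor $M$ cancels with $M^{-1}$ inside $f'$, leaving $M^{2-p}$.

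\textbf{Step 3: choose constants.} Set $R=M^{2-p_1}C_\Lambda^{p_1-2}W^{p_1-1}$ with $C_\Lambda\ge C(\Lambda)^{1/(p_1-2)}$. The hypothesis $M>C_\Lambda W^{(p_1-1)/(p_1-2)}$ is exactly $R<W$, so $T$ maps $B_R$ into itself. It also forces $C(\Lambda)M^{2-p_1}W^{p_1-2}\le C(\Lambda)C_\Lambda^{2-p_1}W^{-1}<1/2$ after enlarging $C_\Lambda$; the $p_k$ term is treated the same way. Banach's fixed point theorem then gives a unique $\dphi_\lambda\in B_R$ with the claimed bound.

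\textbf{Step 4: $C^1$ dependence.} The map $G(w,\phi)=L_\lambda\phi+MP^\perp i^\ast f(M^{-1}(w+\phi))$ is $C^1$ from $K_\Lambda\times(K_\Lambda^\perp\cap V)$ to $K_\Lambda^\perp\cap V$, because the Nemytskii operator $u\mapsto f(u)$ is $C^1$ given the polynomial bounds on $f'$ and $p_1>2$. Its $\phi$-derivative is $L_\lambda(\mathrm{Id}+\text{small})$, which is invertible by the contraction estimate. The implicit function theorem then yields $C^1$ dependence of $\dphi_\lambda(w)$ on $w$.
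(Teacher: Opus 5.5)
Your proof is correct and follows the paper's architecture: you invert $A_\lambda=\mathrm{Id}-\lambda i^\ast$ on $K_\Lambda^\perp$ via the spectral gap, then run a contraction for $T$ with H\"older exponents tuned by $\frac{(p_k-1)dq}{d+2q}=q$. The real difference is how you get the $L^q$ part of the invertibility.

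\begin{itemize}
\item \emph{The paper's route.} \Cref{lem:invertibility eigengap} proves only an a priori lower bound $\|A_\lambda\phi\|_V\ge c_\Lambda\|\phi\|_V$ on $V\cap K_\Lambda^\perp$. It does this by interpolating $\|i^\ast\phi\|_q\lesssim\|\phi\|_{q/(p_k-1)}$ between $L^1$ and $L^q$ and absorbing. This is quick and gives $c_\Lambda$ explicitly. However, it leaves implicit that $A_\lambda^{-1}g\in L^q$ whenever $g\in V\cap K_\Lambda^\perp$, i.e.\ surjectivity onto $V\cap K_\Lambda^\perp$, which is needed for $T$ to map $V$ into $V$.
\item \emph{Your route.} Bootstrapping through $h=g+\lambda i^\ast h$ supplies exactly that missing regularity, so on this point your argument is the more complete one.
\item \emph{A misstated inequality.} The intermediate claim $\|i^\ast h\|_q\le C\|h\|$ is false in general. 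Take $d=7$, $p_k=7$, so $q=35/2$. Then $h=\chi(x)|x|^{-a}$ with $a\in(2.4,2.5)$ and a cutoff $\chi$ lies in $H_0^1$, while $i^\ast h\sim|x|^{2-a}\notin L^{35/2}$. A single application of $i^\ast$ only reaches $H^3\subset L^{14}$. Your alternative ``elliptic regularity iterated'' has the same problem unless the iteration goes through the equation.
\item \emph{What the iteration actually gives.} Along the ladder $s_0=\frac{2d}{d-2}$, $\frac1{s_{j+1}}=\max\{\frac1{s_j}-\frac2d,\frac1q\}$, one gets $\|h\|_{s_{j+1}}\le C\|g\|_q+C\lambda\|h\|_{s_j}$. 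Hence $\|h\|_q\le C(\|g\|_q+\|h\|)$, which is what you need.
\end{itemize}

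Two minor slips. First, the hypothesis $M>C_\Lambda W^{(p_1-1)/(p_1-2)}$ is exactly $R<1$, not $R<W$; it still implies $R<W$ because $W>1$. Second, for the $p_1$-term you also need the $L^q$ half of the $V$-norm, namely $\||u|^{p_1-1}\|_{dq/(d+2q)}\le C\|u\|_q^{p_1-1}$ by H\"older, not only the $L^{2d/(d+2)}$ bound.

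Your implicit-function-theorem argument for $C^1$ dependence is sound. Combined with uniqueness of the fixed point in $B_R$, it makes precise the paper's one-line claim.
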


\begin{lemma}\label{lem:solve simplified 2}
    There is a positive constant $C_{\Lambda} '$ depending on $\Lambda,\Omega,p_1,p_k,b,b_0$, a positive constant $\eta<1$ depending on $b_0,\Omega,p_1$, a constant $\varepsilon_{\Lambda}<\frac{1}{2}$ of the form $\varepsilon_{\Lambda} = C\Lambda^{-\kappa} \eta^{\frac{1}{p_1 - 2}}$ for $C>0$ depending on $\Omega,p_1,p_k,b,b_0$ and $\kappa>0$ depending on $d,p_1,p_k$, such that if $M > C_{\Lambda} '$ and if we let $\lambda = \Lambda + 2 \eta M^{2-p_1}$, there exists a $\dw_{\lambda} \in K_{\Lambda}$ satisfying $\|\dw_{\lambda}\|_2 \in (\frac{\varepsilon_{\Lambda}}{2} , \frac{1}{2})$, such that the function $\du = \dw_\lambda + \dphi_\lambda (\dw_{\lambda})$ is a solution to \eqref{eq:eigen rescaling}, where $\dphi_\lambda (\dw_{\lambda})$ is obtained in \Cref{lem:solve simplified 1} and satisfies $\|\dphi_{\lambda}\|_V \leq M^{2-p_1} (C_{\Lambda} ')  ^{p_1 - 2}$.
\end{lemma}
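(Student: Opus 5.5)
The plan is to solve the finite-dimensional equation \eqref{eq:eigen rescaling 2 simplified} variationally, reducing it to finding a critical point of a reduced functional on $K_{\Lambda}$. Let
\begin{align*}
J_\lambda(\du) \coloneqq \frac12 \|\du\|^2 - \frac{\lambda}{2}\|\du\|_2^2 + M^2 \int_\Omega F(M^{-1}\du)\,\de x ,
\end{align*}
which is $C^1$ on $V$ by \Cref{rem:f inequality}. The space $V$ is chosen exactly so that the $|s|^{p_k}$ term is controlled. The Euler--Lagrange equation of $J_\lambda$ is \eqref{eq:eigen rescaling 0}.

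For $w \in K_\Lambda$ with $\|w\|_2 \le 1$, we have $\|w\|_V \le W(\Lambda)$, because $K_\Lambda$ is finite dimensional. So \Cref{lem:solve simplified 1} gives $\dphi_\lambda(w)$ once $M$ is large, and we set $G(w) \coloneqq J_\lambda(w+\dphi_\lambda(w))$. This $G$ is $C^1$ on $K_\Lambda$. Its derivative is
\begin{align*}
dG(w)[v] = dJ_\lambda(w+\dphi_\lambda)[v + d\dphi_\lambda(w)v].
\end{align*}
By \eqref{eq:eigen rescaling 1 simplified}, $dJ_\lambda(w+\dphi_\lambda)$ vanishes on $K_\Lambda^\perp \cap V$, and $d\dphi_\lambda(w)v$ lies there. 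Hence $dG(w)=0$ is equivalent to $dJ_\lambda(w+\dphi_\lambda)$ vanishing on $K_\Lambda$, which is \eqref{eq:eigen rescaling 2 simplified}. So it suffices to find a critical point $\dw_\lambda$ of $G$ with $\|\dw_\lambda\|_2 \in (\varepsilon_\Lambda/2, 1/2)$.

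Next I compute $G$ to leading order. On $K_\Lambda$, $\|w\|^2 = \Lambda\|w\|_2^2$. With $\lambda = \Lambda + 2\eta M^{2-p_1}$, the quadratic part is $-\eta M^{2-p_1}\|w\|_2^2$, and the main nonlinear part is $M^2\int F(M^{-1}w) = M^{2-p_1}\int \bigl(b_1|w|^{p_1} + \sum_{i\ge2} b_i M^{p_1-p_i}|w|^{p_i}\bigr)$. Define the rescaled functional $\widetilde G \coloneqq M^{p_1-2}G$. Then
\begin{align*}
\widetilde G(w) = -\eta\|w\|_2^2 + M^{p_1}\int F(M^{-1}w) + R(w),
\end{align*}
where $R$ collects all terms involving $\dphi_\lambda$. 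Using $\|\dphi_\lambda\|_V \le C M^{2-p_1}$, the quadratic identity for $J_\lambda$, and \Cref{rem:f inequality}, I expect $|R(w)| \le C_\Lambda M^{2-p_1}$ uniformly on $\{\|w\|_2\le 1\}$.

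The comparison estimates then come from two bounds on $F$. From below, $F \ge b_0|s|^{p_1}$ and Hölder ($p_1>2$) give
\begin{align*}
M^{p_1}\int F(M^{-1}w) \ge b_0\|w\|_{p_1}^{p_1} \ge b_0 c_\Omega \|w\|_2^{p_1}.
\end{align*}
From above, for the test function $w = t e$ with $e \in K_\Lambda$ and $\|e\|_2 = 1$, the sup-norm bound $\|e\|_\infty \le C\Lambda^{(d-1)/4}$ gives $\|e\|_{p_1}^{p_1}, \|e\|_{p_k}^{p_k} \le C\Lambda^{\kappa'}$. This yields
\begin{align*}
\widetilde G(te) \le -\eta t^2 + C\Lambda^{\kappa'} t^{p_1} + o_M(1).
\end{align*}
Choosing $t \sim (\eta\Lambda^{-\kappa'})^{1/(p_1-2)}$ gives $\min \widetilde G \le -c\,\eta\, t^2 < 0$, with this quantity independent of $M$.

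Now minimize $\widetilde G$ over the closed set $\{w\in K_\Lambda : \|w\|_2 \le 1/2\}$, which is compact. On the boundary $\|w\|_2 = 1/2$, $\widetilde G \ge -\eta/4 + b_0c_\Omega 2^{-p_1} - o_M(1) > 0$, provided $\eta<1$ is small depending only on $b_0,\Omega,p_1$. So the minimizer is interior and hence a critical point. For the lower bound on its norm: if $\|w\|_2 \le \varepsilon$, then $\widetilde G(w) \ge -\eta\varepsilon^2 - o_M(1)$, which exceeds the negative value found above once $\varepsilon^2 \ll t^2$. This forces $\|\dw_\lambda\|_2 > \varepsilon_\Lambda/2$ with $\varepsilon_\Lambda = C\Lambda^{-\kappa}\eta^{1/(p_1-2)}$, and fixes $C_\Lambda'$ as the threshold beyond which the $o_M(1)$ errors are below these margins.

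The main obstacle is the uniform control of the remainder $R$ and of the higher powers $M^{p_1-p_i}$, which must be made explicit in $\Lambda$. The $\Lambda$-dependence of $\varepsilon_\Lambda$ must come only from the eigenfunction $L^p$ bounds, while everything depending on $\dphi_\lambda$ must be absorbed by taking $M > C_\Lambda'$. I would first try a mean-value expansion of $J_\lambda$ around $w$, using $dJ_\lambda(w)[\dphi] = O(M^{2-p_1}\|\dphi\|_V)$ together with the $V$-norm bound.
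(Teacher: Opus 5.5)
Your proposal is correct and follows essentially the same route as the paper. You reduce to critical points of $\widetilde J(w)=\widetilde E_\Omega(w+\dphi_\lambda(w))$ on $K_\Lambda$, split off the leading part $\frac{\Lambda-\lambda}{2}\|w\|_2^2+M^2\int_\Omega F(M^{-1}w)\,\de x$ from a remainder of size $O_\Lambda(M^{4-2p_1})$ (via a mean-value expansion and $\|\dphi_\lambda\|_V\lesssim_\Lambda M^{2-p_1}$), and minimize over $\{w\in K_\Lambda:\|w\|_2\le \frac12\}$ using $F\ge b_0|s|^{p_1}$ on $\|w\|_2=\frac12$ and near the origin. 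The differences are cosmetic: you exhibit the negative value along one direction $te$ with an $L^\infty$ eigenfunction bound, whereas the paper shows the leading part is negative on the whole sphere $\|w\|_2=\varepsilon_\Lambda$ via the Sogge-type $L^r$ bound; any polynomial bound in $\Lambda$, e.g.\ $\Lambda^{d/4}$, works, so the exponent $(d-1)/4$, which needs boundary regularity, is not required.
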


\subsection{Proof of \Cref{lem:solve simplified 1}}
Define a bounded linear operator 
    \begin{align}\label{eq:def Alambda}
        A_{\lambda}: K_{\Lambda} ^{\perp} \to K_{\Lambda} ^{\perp}, \ \phi \mapsto \phi - \lambda i^{\ast} \phi.
    \end{align}
By \Cref{lem:s norm inequality 1}, we see that there is a positive constant $C$ depending on $d,p_k,\Lambda,\Omega$, such that $\|A_{\lambda}(\phi)\|_V \leq C \|\phi\|_V$ if $\phi \in V $. Hence, $\Im \left(A_{\lambda} |_{V \cap K_{\Lambda} ^{\perp}}  \right) \subseteq V \cap K_{\Lambda} ^{\perp}$. 
The following \Cref{lem:invertibility eigengap} shows that $A_{\lambda}|_{V \cap K_{\Lambda} ^{\perp}}$ is a invertible map on $V \cap K_{\Lambda} ^{\perp}$.

\begin{lemma}\label{lem:invertibility eigengap}
    There exist a positive constant $\delta_{\Lambda}$ depending on $\Lambda, \Omega$ and a positive constant $c_{\Lambda}$ depending on $\Lambda, p_k,  \Omega$, such that for any $|\lambda - \Lambda| < \delta_{\Lambda}$ and any $\phi \in V \cap K_{\Lambda} ^{\perp}$,
        \begin{align}\label{eq:invertibility eigengap V}
            \| A_{\lambda}(\phi) \|_V \geq c_{\Lambda} \| \phi \|_V.
        \end{align}
\end{lemma}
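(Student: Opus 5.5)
We split into the $H_0^1$ part and, when needed, the $L^q$ part. In both cases we use the spectral decomposition of $i^{\ast}=(-\Delta)^{-1}$.

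\textbf{Step 1: the $H_0^1$ estimate.} Let $\{\mu_j\}$ be the Dirichlet eigenvalues of $\Omega$, with $H_0^1$-orthonormal eigenfunctions $\{e_j\}$; these are also $\langle\cdot,\cdot\rangle$-orthogonal. Write $\phi\in K_\Lambda^\perp$ as $\phi=\sum_{\mu_j\neq\Lambda} a_j e_j$. Since $i^{\ast}e_j=\mu_j^{-1}e_j$, we get $A_\lambda\phi=\sum_{\mu_j\neq\Lambda}(1-\lambda/\mu_j)a_je_j$. Let $g_\Lambda=\operatorname{dist}(\Lambda,\{\mu_j\}\setminus\{\Lambda\})>0$ be the spectral gap at $\Lambda$; it depends only on $\Lambda$ and $\Omega$. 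Choose $\delta_\Lambda=g_\Lambda/2$. For $|\lambda-\Lambda|<\delta_\Lambda$ and $\mu_j\neq\Lambda$ we have $|\mu_j-\lambda|\geq g_\Lambda/2$. Hence
\[
|1-\lambda/\mu_j|=\frac{|\mu_j-\lambda|}{\mu_j}\geq \frac{g_\Lambda}{2\mu_j}.
\]
This lower bound degenerates as $\mu_j\to\infty$, so we do not use it there. For $\mu_j\geq 2\lambda$ we simply have $|1-\lambda/\mu_j|\geq 1/2$. For $\mu_j<2\lambda\le 2\Lambda+g_\Lambda$ we have $|1-\lambda/\mu_j|\geq g_\Lambda/(4\Lambda+2g_\Lambda)$. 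Therefore $\|A_\lambda\phi\|\geq c^{(1)}_\Lambda\|\phi\|$ with $c^{(1)}_\Lambda=\min\{1/2,\,g_\Lambda/(4\Lambda+2g_\Lambda)\}$. This settles the first case of \Cref{def:space V}, where $\|\cdot\|_V=\|\cdot\|$.

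\textbf{Step 2: the $L^q$ part, and the main obstacle.} When $d\ge3$ and $p_k>\frac{2d}{d-2}$, we also need a lower bound on $\|\phi\|_q$ in terms of $\|A_\lambda\phi\|_V$. The identity $A_\lambda\phi=\psi$ rearranges to $\phi=\psi+\lambda i^{\ast}\phi$, so
\[
\|\phi\|_q\le\|\psi\|_q+\lambda\|i^{\ast}\phi\|_q.
\]
The difficulty is that $\|i^{\ast}\phi\|_q$ must be controlled by $H_0^1$ quantities without losing integrability. We bootstrap with \Cref{lem:s norm inequality 1}, which gives $\|i^{\ast}u\|_s\le C\|u\|_{ds/(d+2s)}$: each application of $i^{\ast}$ raises integrability by the Sobolev-type step $\frac1s=\frac1r-\frac2d$. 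We start from $\phi\in L^{2^\ast}$, where $2^\ast=\frac{2d}{d-2}$ and $\|\phi\|_{2^\ast}\le C\|\phi\|$. We iterate the identity $\phi=\psi+\lambda i^{\ast}\phi$ finitely many times, $m$ depending only on $d$ and $q$, using $\|\psi\|_{r}\le C\|\psi\|_q$ for $r\le q$ (H\"older on the bounded domain $\Omega$). This yields
\[
\|\phi\|_q\le C(\Lambda,d,p_k,\Omega)\bigl(\|\psi\|_q+\|\phi\|\bigr).
\]
Equivalently, $\phi=\sum_{l=0}^{m-1}(\lambda i^{\ast})^l\psi+(\lambda i^{\ast})^m\phi$. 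Each term is bounded by the exponent chain, and the last term is bounded by $C\lambda^m\|\phi\|_{2^\ast}\le C\|\phi\|$ once $m$ is large enough that the chain starting at $2^\ast$ reaches or passes $q$. If the chain passes $q$, we finish with H\"older; if it would exceed the $L^\infty$ threshold, we cap the exponent.

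\textbf{Step 3: combining.} By Step 1, $\|\phi\|\le (c^{(1)}_\Lambda)^{-1}\|\psi\|$. Inserting this into Step 2 gives
\[
\|\phi\|_V=\|\phi\|+\|\phi\|_q\le C\bigl(\|\psi\|+\|\psi\|_q\bigr)=C\|\psi\|_V,
\]
which is \eqref{eq:invertibility eigengap V} with $c_\Lambda=C^{-1}$, depending on $\Lambda,p_k,\Omega$ (and $d$). Surjectivity onto $V\cap K_\Lambda^\perp$ follows from the same decomposition, which makes $A_\lambda$ an isomorphism of $K_\Lambda^\perp$ in $H_0^1$, combined with the bootstrap, which shows the preimage of $\psi\in V$ lies in $L^q$. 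Only the inequality is claimed here, however.
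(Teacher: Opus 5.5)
Your proof is correct. Step 1 is essentially the paper's spectral-gap argument, and you are slightly more careful there. You make explicit that $|1-\lambda/\mu_j|\ge 1/2$ for large $\mu_j$, so the infimum over infinitely many eigenvalues is positive. Your gap $g_\Lambda$ also does not presuppose an eigenvalue below $\Lambda$.

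The genuine difference is in Step 2.

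\textbf{The paper's route.} The paper applies \Cref{lem:s norm inequality 1} once, with $s=q$, to get $\|i^{\ast}\phi\|_q\le C\|\phi\|_{dq/(d+2q)}$. It then interpolates this intermediate norm between $L^1$ (controlled by $\|\phi\|$ via Sobolev) and $L^q$. This gives $\|\phi\|_q\le\|A_\lambda\phi\|_q+C\|\phi\|_q^{1-\alpha}\|\phi\|^{\alpha}$, and the $L^q$ factor is absorbed by a case split (equivalently, Young's inequality).

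\textbf{Your route.} You iterate $\phi=\psi+\lambda i^{\ast}\phi$ a fixed number $m=m(d,q)$ of times. You then run the exponent chain from $L^{2^{\ast}}$ up to $L^q$, capping at $q$.

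\textbf{Comparison.} The paper's version is shorter and uses a single elliptic estimate. However, its absorption step subtracts $\tfrac12\|\phi\|_q$, so it relies on knowing a priori that $\|\phi\|_q<\infty$, i.e.\ that $\phi\in V$. Your bootstrap is linear and never uses $\phi\in L^q$. As you note, it therefore also shows that the $H_0^1$-inverse of $A_\lambda$ maps $V\cap K_\Lambda^\perp$ into itself. That surjectivity is what the paper tacitly uses when it writes $(A_\lambda)^{-1}$ on $V\cap K_\Lambda^\perp$ in the proof of \Cref{lem:solve simplified 1}. The lower bound alone gives only injectivity and closed range.
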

\begin{proof}
    We only prove the lemma when $d \geq 3 $ and $p_k > \frac{2d}{d-2}$, because the proof for the other case in \Cref{def:space V} is the same. We first show that there are $\delta_{1},c_{1}$ depending on $\Lambda, \Omega$, such that for any $|\lambda - \Lambda| < \delta_{1}$ and any $\phi \in K_{\Lambda} ^{\perp}$,
        \begin{align}\label{eq:invertibility eigengap H1}
            \| A_{\lambda}(\phi) \| \geq c_{1} \| \phi \|.
        \end{align}
    Assume that $\{\varphi_j\}$ is an orthonormal basis of $K_{\Lambda} ^\perp$ in $H_0 ^1 (\Omega)$ consisting of Dirichlet Laplacian eigenfunctions of eigenvalues $\Lambda_j \neq \Lambda$. Hence, because $i^{\ast} = (-\Delta)^{-1}$, we have that
        \begin{align}
            \phi = \sum_{j} \langle \varphi_j , \phi \rangle \varphi_j, \text{ and } A_{\lambda} = \sum_{j} \langle \varphi_j , \phi \rangle  \left(1-\frac{\lambda}{\Lambda_j} \right)\varphi_j.
        \end{align}
    Hence,
        \begin{align}
            \|A_{\lambda} \phi\| ^2 = \sum_{j} |\langle \varphi_j , \phi \rangle|^2  \left(1-\frac{\lambda}{\Lambda_j} \right)^2 \geq \min_{j} \left(1-\frac{\lambda}{\Lambda_j} \right)^2 \cdot \|\phi\|^2.
        \end{align}
    So, if we list all Dirichlet Laplacian eigenvalues in the order $0<\Lambda_1<\dots < \Lambda_- < \Lambda < \Lambda_+ < \dots$, and if we let $\delta_1 = \frac{1}{2} \min\{\Lambda_+ - \Lambda, \Lambda - \Lambda_-\}$, we see that when $|\lambda -\Lambda| < \delta_1$, we have that $\min_{j} \left(1-\frac{\lambda}{\Lambda_j} \right)^2 \geq c_1 ^2 > 0$ for a positive constant $c_1$ depending on $\Lambda_-, \Lambda , \Lambda_+$.

    Next, by the definition of $A_{\lambda}$ \eqref{eq:def Alambda},
        \begin{align}
            \|\phi\|_q = \|A_{\lambda}(\phi) + \lambda i^{\ast} (\phi)\|_q \leq  \|A_{\lambda}(\phi)\|_q + \lambda \| i^{\ast} (\phi)\|_q. 
        \end{align}
    Using \Cref{lem:s norm inequality 1} for $s = q$, we see that $\| i^{\ast} (\phi)\|_q \leq C \|\phi\|_{\frac{q}{p_k}} \leq C \|\phi\|_{q} ^{1-\alpha} \|\phi\|_1 ^{\alpha}$, where the last inequality follows from the H{\"o}lder inequality and $\alpha \in (0,1)$ satisfies that $\frac{p_k}{q} = \frac{1-\alpha}{q} + \frac{\alpha}{1}$. Hence, apply the Sobolev inequality for $\|\phi\|_1$, we obtain that there is a constant $C$ depending on $d,q,\Omega$, such that $\| i^{\ast} (\phi)\|_q \leq C \|\phi\|_{q} ^{1-\alpha} \|\phi\| ^{\alpha}$, and thus 
        \begin{align}
            \|\phi\|_q \leq \|A_{\lambda}(\phi) \|_q +  C \|\phi\|_{q} ^{1-\alpha} \|\phi\| ^{\alpha} ,
        \end{align}
    for a constant $C$ depending on $\Lambda, d,q,\Omega$.
    
    When $ C\|\phi\|^{\alpha} \leq \frac{1}{2} \|\phi\|_q ^{\alpha}$, we see that $ \|\phi\|_q \leq 2\|A_{\lambda}(\phi) \|_q$, and hence $\| A_{\lambda}(\phi) \|_V \geq \min\{c_1 , \frac{1}{2} \} \| \phi \|_V$ after combining \eqref{eq:invertibility eigengap H1}. When $ C\|\phi\|^{\alpha} > \frac{1}{2} \|\phi\|_q ^{\alpha}$, i.e., $ (2C)^{\frac{1}{\alpha}} \|\phi\| >  \|\phi\|_q $, we see that 
        \begin{align}
            \|\phi\|_V \leq (1+(2C)^{\frac{1}{\alpha}})\|\phi\| \leq c_1 ^{-1} (1+(2C)^{\frac{1}{\alpha}}) \| A_{\lambda}(\phi) \| \leq c_1 ^{-1} (1+(2C)^{\frac{1}{\alpha}}) \| A_{\lambda}(\phi) \|_{V}
        \end{align}
    by \eqref{eq:invertibility eigengap H1}. In both cases, we obtain a $c_{\Lambda}$ only depending on $\Lambda, p_k,  \Omega$ such that $\| A_{\lambda}(\phi) \|_V \geq c_{\Lambda} \| \phi \|_V$ when we pick $\delta_{\Lambda} = \delta_1$.
\end{proof}
We remark that Lemma 3.1 of \cite{mugnai2005exact} also gives a proof of \Cref{lem:invertibility eigengap} using contradiction. The proof we presented here shows the constant $c_{\Lambda}$ directly. We can now prove \Cref{lem:solve simplified 1}.

\begin{proof}[Proof of \Cref{lem:solve simplified 1}]
We only prove the lemma when $d \geq 3 $ and $p_k > \frac{2d}{d-2}$, because the proof for the other case in \Cref{def:space V} is the same.

First, $A_{\lambda}$ is by invertible according to \Cref{lem:invertibility eigengap}. We can then rewrite \eqref{eq:eigen rescaling 1 simplified} as
    \begin{align}
        \dphi_{\lambda} = - M (A_{\lambda})^{-1} \circ P^{\perp} \circ i^{\ast} \left[f\left(M^{-1}(w + \dphi_\lambda) \right) \right].
    \end{align}
Fix a constant $W>1$. For any fixed $w \in K_{\Lambda}$ with $\|w\|_V \leq W$, we define the map
    \begin{align}\label{eq:contraction mapping}
        T: V \cap K_{\Lambda} ^{\perp} \to V \cap K_{\Lambda} ^{\perp}, \ \phi \mapsto - M (A_{\lambda})^{-1} \circ P^{\perp} \circ i^{\ast} \left[f\left(M^{-1}(w + \phi) \right) \right].
    \end{align}
We show that there is a positive constant $\xi_{\Lambda}$ depending on $\Lambda,p_1, p_k,\Omega$, such that when $M > \xi_{\Lambda} W^{\frac{p_1 - 1}{p_1 - 2}}$, $T$ has a fixed point satisfying $\|\dphi_{\lambda}\|_V \leq M^{2-p_1} \xi_{\Lambda} ^{p_1 - 2} W^{p_1 - 1} $.

First, we compute $\| T(\phi)\|_V$ for a $\phi \in V \cap K_{\Lambda} ^{\perp}$ with $\|\phi\|_V \leq 1$. According to \eqref{eq:P,P perp norm} and \eqref{eq:invertibility eigengap V}, we have that
    \begin{align}\label{eq:T phi norm}
        \begin{split}
            \| T(\phi)\|_V &\leq M C \left\| i^{\ast} \left[f\left(M^{-1}(w + \phi) \right) \right] \right\|_V 
            \\  &=  M C \left(\left\| i^{\ast} \left[f\left(M^{-1}(w + \phi) \right) \right] \right\| + \left\| i^{\ast} \left[f\left(M^{-1}(w + \phi) \right) \right] \right\|_q \right)
            \\  &\leq  M C \left(\left\| f\left(M^{-1}(w + \phi) \right)  \right\|_{\frac{2d}{d+2}} + \left\| f\left(M^{-1}(w + \phi) \right) \right\|_\frac{dq}{d+2q} \right),
        \end{split}
    \end{align}
where we used \eqref{eq:i ast sobolev norm} and \Cref{lem:s norm inequality 1}.
In the above inequalities, the constant $C$ varies from line by line, but only depends on $\Lambda, p_1, p_k,  \Omega$. Because $q >2$, by H{\"o}lder's inequality, we only need to estimate the $L^{\frac{dq}{d+2q}}$-norm term above. We notice that, according to \Cref{rem:f inequality} and the choice of $q$ in \Cref{def:space V}, for any $v \in V$, 
    \begin{align}\label{eq:f(u) norm}
        \begin{split}
            \left\|f(v)\right\|_\frac{dq}{d+2q} &\leq C \left\|v\right\|_{\frac{(p_1 - 1)dq}{d+2q}} ^{p_1 - 1} + C' \left\|v\right\|_{\frac{(p_k - 1)dq}{d+2q}} ^{p_k - 1} 
            \leq C \left\|v\right\|_{\frac{(p_k - 1)dq}{d+2q}} ^{p_1 - 1} + C' \left\|v\right\|_{\frac{(p_k - 1)dq}{d+2q}} ^{p_k - 1}
            \\  & = C \left\|v\right\|_q ^{p_1 - 1} + C' \left\|v\right\|_q ^{p_k - 1} ,
        \end{split}
    \end{align}
where we used H{\"o}lder's inequality in the second inequality, and the constants $C,C'$ vary from line by line, but only depends on $\Lambda, p_1, p_k,  \Omega$. Hence, choose $M > 2W$ first and plug $v= M^{-1}(w + \phi)$ into \eqref{eq:f(u) norm}, we have that because $\|w + \phi\|_q \leq \|w + \phi\|_V \leq W +1 \leq 2W$, 
    \begin{align}
        \left\|f\left(M^{-1}(w + \phi) \right)\right\|_\frac{dq}{d+2q} \leq (C+C') M^{1-p_1} (2W)^{p_1 - 1} .
    \end{align}
Thus, for some constant $C$ depending on $\Lambda, p_1, p_k,  \Omega$,
    \begin{align}
        \| T(\phi)\|_V \leq C M^{2-p_1} W^{p_1 - 1}.
    \end{align}
In particular, when $M > C^{\frac{1}{p_1 - 2} } W ^{\frac{p_1 - 1}{p_1 - 2}}$, we have that $\| T(\phi)\|_V < 1$.

Next, we show that $T$ is a contraction mapping. Similar to \eqref{eq:T phi norm}, for any $\phi_1, \phi_2 \in V \cap K_{\Lambda} ^{\perp}$ with $\|\phi_1\|_V, \|\phi_2\|_V \leq 1$, we have that
    \begin{align}
        \begin{split}
            \| T(\phi_1) - T(\phi_2)\|_V &\leq M C  \left\| f\left(M^{-1}(w + \phi_1) \right) - f\left(M^{-1}(w + \phi_2) \right) \right\|_\frac{dq}{d+2q} 
            \\  &=C  \left\| f ' \left(M^{-1}(w + (\theta \phi_1+(1-\theta)\phi_2) \right)(\phi_1 - \phi_2) \right\|_\frac{dq}{d+2q} ,
        \end{split}
    \end{align}
where we used the mean value theorem and $\theta \in (0,1)$. 
Similar to \eqref{eq:f(u) norm}, we see that for any $v \in V$ and any $r > 1$,
    \begin{align}\label{eq:f'(u) norm}
        \begin{split}
            \left\|f'(v)\right\|_r &\leq C \left\|v\right\|_{r(p_1-2)} ^{p_1 - 2} + C' \left\|v\right\|_{r(p_k-2)} ^{p_k - 2} 
            \leq  C \left\|v\right\|_{r(p_k-2)} ^{p_1 - 2} + C' \left\|v\right\|_{r(p_k-2)} ^{p_k - 2} ,
        \end{split}
    \end{align}
where the constants $C,C'$ vary from line by line, but only depends on $\Lambda, p_1, p_k,  \Omega$. Hence, choose $M > 2W$ and plug $v= M^{-1}(w + (\theta \phi_1+(1-\theta)\phi_2)$ into \eqref{eq:f(u) norm}, we have that because $\|w + (\theta \phi_1+(1-\theta)\phi_2)\|_q \leq \|w + (\theta \phi_1+(1-\theta)\phi_2)\|_V \leq W+1\leq 2W$, and because $\frac{(p_k - 1) d q}{d + 2q} = q$,
    \begin{align}
        \begin{split}
            \| T(\phi_1) - T(\phi_2)\|_V &\leq  C  \left\| f ' \left(M^{-1}(w + (\theta \phi_1+(1-\theta)\phi_2) \right) \right\|_{\frac{(p_k - 1)dq}{(p_k - 2)(d+2q)}} \left\|(\phi_1 - \phi_2) \right\|_{\frac{(p_k - 1)dq}{d+2q}}
            \\  &\leq C M^{2-p_1} W^{p_1 - 2} \left\|(\phi_1 - \phi_2) \right\|_{q} \leq C M^{2-p_1} W^{p_1 - 2} \left\|(\phi_1 - \phi_2) \right\|_{V}. 
        \end{split}
    \end{align}
Hence, when $M > C^{\frac{1}{p_1 - 2} } W$, $T$ is a contraction mapping. The existence of a unique fixed point $\dphi_\lambda \in (K_{\Lambda} ^{\perp} \cap V) $ with $\|\dphi_\lambda\|_V < 1$ of $T$ then follows from the contraction mapping theorem. This $\dphi_\lambda$ then solves \eqref{eq:eigen rescaling 1 simplified}. The $C^1$-differentiability with respect to $w \in K_{\Lambda}$ comes from the differentiability of $T$ with respect to $w \in K_{\Lambda}$ as defined in \eqref{eq:contraction mapping}.
\end{proof}

%-------------------------------------------------

\subsection{Proof of \Cref{lem:solve simplified 2}}

We reduce solving \eqref{eq:eigen rescaling 2 simplified} to finding a critical point of an energy functional defined on the finite dimensional space $K_{\Lambda}$. First, we notice that solutions to \eqref{eq:eigen rescaling} is equivalent to critical points of the energy functional
    \begin{align}
        \widetilde E_{\Omega}(\du) \coloneqq \int_{\Omega} \frac{1}{2}|\nabla \du(x)|^2 - \frac{\lambda}{2} \du^2 + M^2 F(M^{-1}\du(x)) \ \de x, \quad \du \in H_0 ^1 (\Omega).
    \end{align}
We also define an energy functional on $K_{\Lambda}$. For any $w \in K_{\Lambda}$, we denote $\dphi_{\lambda}(w)$ as the solution to \eqref{eq:eigen rescaling 1 simplified} we obtained in \Cref{lem:solve simplified 1} after assuming $M > C_{\Lambda} \|w\|_V ^{\frac{p_1 - 1}{p_1-2}}$, and we define
    \begin{align}
        \widetilde J(w) \coloneqq \widetilde E_{\Omega}(w + \dphi_{\lambda}(w)).
    \end{align}
\begin{lemma}\label{lem:equivalent critical point}
    Adopt the assumptions of \Cref{lem:solve simplified 1}. A function $\du = w + \dphi_\lambda (w)$ is a solution to \eqref{eq:eigen rescaling} if and only if $w$ is a critical point of $\widetilde J$ on $K_{\Lambda}$.
\end{lemma}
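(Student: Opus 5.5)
The plan is the standard Lyapunov--Schmidt argument: compute the derivative of $\widetilde J$ at $w$ in a direction $h \in K_{\Lambda}$ via the chain rule, and show that the terms coming from the variation of $\dphi_\lambda$ vanish because $\dphi_\lambda(w)$ solves the auxiliary equation \eqref{eq:eigen rescaling 1 simplified}. First I will record that $\widetilde E_{\Omega}$ is $C^1$ on $V$. Its derivative at $\du$ in direction $v$ is
\begin{align*}
    \widetilde E_{\Omega}'(\du)[v] = \int_{\Omega} \langle \nabla \du, \nabla v\rangle - \lambda \du v + M f(M^{-1}\du) v \ \de x = \left\langle \du - i^{\ast}\left(\lambda \du - M f(M^{-1}\du)\right), v \right\rangle,
\end{align*}
with $\langle\cdot,\cdot\rangle$ the $H_0^1$ inner product. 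This uses the defining identity \eqref{eq:i ast sobolev norm} of $i^{\ast}$ together with the integrability of $f(M^{-1}\du)$ given by \Cref{rem:f inequality}, \eqref{eq:f(u) norm} and \Cref{lem:s norm inequality 1}. Write $G(\du) \coloneqq \du - i^{\ast}(\lambda \du - M f(M^{-1}\du))$. Then $\du$ solves \eqref{eq:eigen rescaling} iff $G(\du)=0$, i.e.\ iff both \eqref{eq:eigen rescaling 1} and \eqref{eq:eigen rescaling 2} hold.

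Next, fix $w \in K_{\Lambda}$ and $h \in K_{\Lambda}$. By the $C^1$-dependence of $\dphi_\lambda$ on $w$ from \Cref{lem:solve simplified 1}, the chain rule gives
\begin{align*}
    \widetilde J'(w)[h] = \left\langle G(w+\dphi_\lambda(w)), h + D\dphi_\lambda(w)[h] \right\rangle .
\end{align*}
Here $D\dphi_\lambda(w)[h] \in K_{\Lambda}^{\perp}\cap V$, since $\dphi_\lambda$ takes values in that closed subspace. By construction $P^{\perp} G(w+\dphi_\lambda(w)) = 0$, which is exactly \eqref{eq:eigen rescaling 1 simplified}. So $G(w+\dphi_\lambda(w)) \in K_{\Lambda}$, and its pairing with $D\dphi_\lambda(w)[h]$ vanishes by orthogonality. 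Hence
\begin{align*}
    \widetilde J'(w)[h] = \langle P G(w+\dphi_\lambda(w)), h\rangle \quad \text{for all } h \in K_{\Lambda}.
\end{align*}

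Both directions now follow. If $w$ is critical, then $PG = 0$ by taking $h = PG$. Combined with $P^{\perp}G=0$ this gives $G=0$, so $\du$ solves \eqref{eq:eigen rescaling}. Conversely, if $\du = w+\dphi_\lambda(w)$ solves \eqref{eq:eigen rescaling}, then $G=0$ and so $\widetilde J'(w)=0$.

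The main obstacle is justifying the differentiability. First, $\widetilde E_{\Omega}$ must be $C^1$ on $V$ in the supercritical case $p_k > \frac{2d}{d-2}$. There the term $\int F(M^{-1}\du)$ is not defined on all of $H_0^1$, so I would work on $V$ with $\|\cdot\|_q$ and use the growth bounds of \Cref{rem:f inequality} with $q>p_k$. Second, the $C^1$ dependence of $\dphi_\lambda$ is only asserted in \Cref{lem:solve simplified 1}. I would justify it by the implicit function theorem applied to $(w,\phi)\mapsto \phi - T_w(\phi)$: the derivative in $\phi$ is $I - D_\phi T_w$, which is invertible because $T_w$ is a contraction.
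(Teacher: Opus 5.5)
Your proposal is correct and follows essentially the paper's argument: you differentiate $\widetilde J(w)=\widetilde E_\Omega(w+\dphi_\lambda(w))$ by the chain rule, drop the $D\dphi_\lambda(w)[h]\in K_\Lambda^\perp$ contribution because $\dphi_\lambda(w)$ solves the $P^\perp$-projected equation, and conclude that $w$ is critical iff the differential of $\widetilde E_\Omega$ at $\du$ also vanishes on $K_\Lambda$, i.e.\ iff it vanishes identically. Your added remarks on the $C^1$ regularity of $\widetilde E_\Omega$ on $V$, and on getting $C^1$ dependence of $\dphi_\lambda$ from the implicit function theorem (where $I-D_\phi T_w$ is invertible since the contraction constant gives $\|D_\phi T_w\|<1$), usefully fill in points the paper only asserts.
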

\begin{proof}
    By construction in \Cref{lem:solve simplified 1}, for any $w \in K_\Lambda$, the function $\dphi_\lambda(w)$ satisfies (2.6), which ensures that $\du = w + \dphi_\lambda(w)$ satisfies the projected equation onto $K_\Lambda^\perp$. This is equivalent to the condition that $\langle \de \widetilde{E}_\Omega(\du), \xi \rangle = 0$ for all $\xi \in K_\Lambda^\perp $. Here, $\de \widetilde{E}_\Omega(\du)$ is the Fr{\'e}chet derivative of $\widetilde{E}_\Omega$ at $\du$ in $H_0 ^1 (\Omega)$.
Now, compute the derivative of $\widetilde{J}$ at $w$:
    \begin{align}
        \de\widetilde{J}(w)[v] = \frac{\de}{\de t}\bigg|_{t=0} \widetilde{J}(w + tv) = \frac{\de}{\de t}\bigg|_{t=0} \widetilde{E}_\Omega((w + tv) + \widetilde{\phi}_\lambda(w + tv)) = \langle \de \widetilde{E}_\Omega(\du), v + \psi \rangle,
    \end{align}
where $\psi = \de \dphi_\lambda(w)[v]$ and $v \in K_\Lambda$. From \Cref{lem:solve simplified 1}, $\dphi_\lambda(w) \in K_\Lambda^\perp \cap V$ and is $C^1$-differentiable with respect to $w$, so $\psi$ is well-defined and $\psi \in K_\Lambda^\perp \cap V$. Thus, $\langle \de \widetilde{E}_\Omega(\du), \psi \rangle = 0$, and
    \begin{align}
        \de\widetilde{J}(w)[v] = \langle \de \widetilde{E}_\Omega(\du), v  \rangle .
    \end{align}
Therefore, $w$ is a critical point of $\widetilde J$ on $K_{\Lambda}$ if and only if $\de\widetilde{J}(w)[v] = 0$ for all $v \in K_\Lambda$, and if and only if $\langle \de \widetilde{E}_\Omega(\du), v  \rangle = 0$ for all $v \in K_\Lambda$. Since $\langle \de \widetilde{E}_\Omega(\du), \xi  \rangle$ already holds for all $\xi \in K_\Lambda^\perp$ by construction, this is equivalent to $\de \widetilde{E}_\Omega(\du) = 0$, meaning $\du = w + \dphi_\lambda(w)$ is a critical point of $\widetilde{E}_\Omega$ and thus a solution to \eqref{eq:eigen rescaling}.
\end{proof}

For functions $w \in K_{\Lambda}$, we have the following inverse H\"older inequalities.
\begin{lemma}\label{lem:sogges estimate}
    Let $M$ be a manifold (with or without boundary) of dimension $N$, and $f$ is a Laplacian eigenfunction satisfying $\Delta_{M}f + \Lambda f = 0$. Then, there is a constant $C$ only depending on $M$, such that for any $r \geq 2$,
        \begin{align}
            \|f\|_r \leq C \Lambda^{\frac{N}{4}} \|f\|_2.
        \end{align}
\end{lemma}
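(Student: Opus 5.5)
The plan is to compare $f$ with its expansion in Laplacian eigenfunctions and use the pointwise (sup-norm) bound for eigenfunctions, then interpolate. Since every $L^r$ norm with $2\le r\le\infty$ is controlled by $\|f\|_2^{2/r}\|f\|_\infty^{1-2/r}$, it suffices to prove the endpoint estimate
\begin{align}
    \|f\|_\infty \leq C \Lambda^{\frac{N}{4}} \|f\|_2 ,
\end{align}
with $C$ depending only on $M$. Indeed, once this is available, H\"older interpolation gives $\|f\|_r \leq \|f\|_\infty^{1-2/r}\|f\|_2^{2/r} \leq C^{1-2/r}\Lambda^{\frac N4(1-\frac 2r)}\|f\|_2$. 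This is at most $\max\{1,C\}\Lambda^{N/4}\|f\|_2$ whenever $\Lambda \geq 1$, and the finitely many eigenvalues below $1$ (if any) are absorbed into the constant.

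For the sup bound, I would use the spectral projector / kernel argument. Let $\{e_j\}$ be an $L^2$-orthonormal Dirichlet (or closed-manifold) eigenbasis with eigenvalues $\mu_j$. For any $x$, the function $f$ lies in the span of the $e_j$ with $\mu_j=\Lambda$, so by Cauchy--Schwarz
\begin{align}
    |f(x)|^2 \leq \|f\|_2^2 \sum_{\mu_j \le \Lambda} |e_j(x)|^2 .
\end{align}
The right-hand sum is the diagonal of the spectral function $e(x,x,\Lambda)$. I would bound it through the heat kernel: for $t=\Lambda^{-1}$,
\begin{align}
    \sum_{\mu_j\le\Lambda}|e_j(x)|^2 \le e\sum_j e^{-t\mu_j}|e_j(x)|^2 = e\,p_t(x,x)\le C t^{-N/2}=C\Lambda^{N/2}.
\end{align}
The on-diagonal heat kernel bound $p_t(x,x)\le Ct^{-N/2}$ for $t\le 1$ holds on compact manifolds, and in the Dirichlet case follows from domain monotonicity, $p^{D}_t\le p_t^{\bR^N}=(4\pi t)^{-N/2}$ for Euclidean domains. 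For compact manifolds with boundary, I would embed into a closed manifold, or double it, and again use domain monotonicity. Taking square roots gives $|f(x)|\le C\Lambda^{N/4}\|f\|_2$.

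The main obstacle is the uniform on-diagonal heat kernel estimate in the general manifold-with-boundary setting. This is where the dependence ``only on $M$'' enters. In our application $M=\Omega\subseteq\bR^d$ with Dirichlet conditions, and there it is immediate from the comparison with the Euclidean heat kernel, which is all that is needed later. Alternatively, one could cite Sogge's or H\"ormander's local Weyl law bound $e(x,x,\Lambda)\le C\Lambda^{N/2}$. The rest is routine interpolation.
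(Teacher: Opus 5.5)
Your proof is correct, but it takes a different route from the paper. The paper gives no argument of its own: it cites the spectral cluster estimates of Sogge (closed manifolds) and Smith--Sogge (manifolds with boundary). Those estimates give sharper powers of $\Lambda$, for instance $\Lambda^{(N-1)/4}$ in the sup norm, and the paper then remarks that the cruder $\Lambda^{N/4}$ is enough.

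You instead prove the $L^\infty$ endpoint directly and then interpolate with $L^2$. The key steps are Cauchy--Schwarz against the diagonal spectral function, the bound $\sum_{\mu_j\le\Lambda}|e_j(x)|^2\le e\,p_{1/\Lambda}(x,x)$, and an on-diagonal heat kernel estimate. This loses exactly a factor $\Lambda^{1/4}$ relative to the sharp sup-norm bound, which is precisely the non-sharp exponent the lemma asserts. In exchange it is self-contained.

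Your approach also has two concrete advantages over the citation. First, the constant you obtain is uniform in $r$, as the statement claims, whereas the cited $L^q$ bounds carry $q$-dependent constants. Second, in the Euclidean Dirichlet case, which is how the lemma is used for $K_\Lambda$, domain monotonicity gives $p^D_t(x,x)\le(4\pi t)^{-N/2}$ for all $t>0$, so
\[
\|f\|_\infty\le \sqrt{e}\,(4\pi)^{-N/4}\Lambda^{N/4}\|f\|_2
\]
on any bounded domain, with no boundary regularity. That fits the $C^1$ generality of \Cref{thm:main bifurcation} better than the cited boundary results, which are stated for smooth boundaries.

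The one caveat is shared with the statement itself. Your final absorption step needs $\Lambda$ bounded below by a positive constant. On a closed manifold such as $\bS^{d-1}$, the case $\Lambda=0$ with $f$ constant violates the inequality as written. This does not affect the paper's uses: it applies the lemma only to Dirichlet eigenvalues and to spherical harmonics of large degree.
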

For a proof for \Cref{lem:sogges estimate}, see for example the equation (1.8) and (1.9) in \cite{smith2007p} and \cite{sogge1988concerning}. We remark that the sharp order is smaller than $\Lambda^{\frac{N}{4}}$, as shown in \cite{smith2007p,sogge1988concerning}, but the order $\Lambda^{\frac{N}{4}}$ is enough for the purpose of our paper.

Next, because $w \in K_{\Lambda}$ and $\dphi_{\lambda}(w) \in K_{\Lambda} ^{\perp}$, they are orthogonal to each other in both $H_0 ^1 (\Omega)$ and $L^2 (\Omega)$. Hence, we can expand $\widetilde{J}(w)$ as
    \begin{align}
        \begin{split}
            \widetilde{J}(w) &= \int_{\Omega} \frac{1}{2}|\nabla w|^2 + \frac{1}{2} |\nabla \dphi_{\lambda}(w)|^2 - \frac{\lambda}{2} w^2- \frac{\lambda}{2} (\dphi_{\lambda}(w)) ^2 + M^2 F(M^{-1}(w + \dphi_{\lambda}(w))) \ \de x
            \\  & = \widetilde{G}(w) + \widetilde{R}(w), 
        \end{split}
    \end{align}
where
    \begin{align}
        \widetilde{G}(w) \coloneqq \frac{\Lambda - \lambda}{2} \| w \|_2 ^2 + M^2 \int_{\Omega} F(M^{-1}w) \ \de x,
    \end{align}
and 
    \begin{align}
        \widetilde{R}(w) \coloneqq \int_{\Omega} \frac{1}{2} |\nabla \dphi_{\lambda}(w)|^2 - \frac{\lambda}{2} (\dphi_{\lambda}(w)) ^2 + M^2 \left[ F(M^{-1}(w + \dphi_{\lambda}(w)))  - F(M^{-1}w)\right]\ \de x.
    \end{align}

\begin{lemma}\label{lem:critical point for tilde G}
    There exist a positive constant $\eta<1$ depending on $b_0,\Omega,p_1$, a constant $\varepsilon_{\Lambda}<\frac{1}{2}$ of the form $\varepsilon_{\Lambda} = C\Lambda^{-\kappa} \eta^{\frac{1}{p_1 - 2}}$ where $C>0$ depends on $\Omega,p_1,p_k,b,b_0$ and $\kappa>0$ depends on $d,p_1,p_k$, such that if we let $\lambda = \Lambda + 2 \eta M^{2-p_1}$, we have that $\widetilde{G}(w) \geq -(\frac{\eta}{4} \varepsilon_{\Lambda} ^2 )M^{2-p_1}$ when $\|w\|_2 \in [0,\frac{\varepsilon_{\Lambda}}{2}]$, $\widetilde{G}(w) \leq -(\frac{\eta}{2} \varepsilon_{\Lambda} ^2 )M^{2-p_1}<0$ when $\|w\|_2 = \varepsilon_{\Lambda}$, and $\widetilde{G}(w) \geq (\frac{\eta}{4})M^{2-p_1} >0$ when $\|w\|_2 = \frac{1}{2}$. 
\end{lemma}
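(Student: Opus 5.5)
We work directly with the explicit form of $\widetilde G$. With $\lambda=\Lambda+2\eta M^{2-p_1}$,
\[
\widetilde G(w)= -\eta M^{2-p_1}\|w\|_2^2 + M^2\int_\Omega F(M^{-1}w)\,\de x .
\]
By \Cref{a:superlinearity} and \Cref{rem:f inequality},
\[
b_0 M^{2-p_1}\|w\|_{p_1}^{p_1}\le M^2\int_\Omega F(M^{-1}w)\le C M^{2-p_1}\|w\|_{p_1}^{p_1}+C'M^{2-p_k}\|w\|_{p_k}^{p_k}.
\]
Every term therefore carries the common factor $M^{2-p_1}$, except the $p_k$ term, which is even smaller since $M^{2-p_k}\le M^{2-p_1}$ for $M\ge1$. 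So everything reduces to comparing $\eta\|w\|_2^2$ with $\|w\|_{p_1}^{p_1}$ and $\|w\|_{p_k}^{p_k}$. For $w\in K_\Lambda$ these are controlled on both sides by $\|w\|_2$:
\begin{itemize}
\item \emph{Lower bound.} Hölder on the bounded domain gives $\|w\|_{p_1}\ge |\Omega|^{1/p_1-1/2}\|w\|_2$.
\item \emph{Upper bound.} \Cref{lem:sogges estimate} with $N=d$ gives $\|w\|_r\le C\Lambda^{d/4}\|w\|_2$ for $r=p_1,p_k$.
\end{itemize}

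\textbf{Boundary sphere $\|w\|_2=\tfrac12$.} Using the lower bound,
\[
\widetilde G(w)\ge M^{2-p_1}\Big(b_0 c_\Omega 2^{-p_1}-\tfrac{\eta}{4}\Big),\qquad c_\Omega=|\Omega|^{1-p_1/2}.
\]
Choose $\eta<1$ depending only on $b_0,\Omega,p_1$ so that $b_0c_\Omega 2^{-p_1}\ge \eta/2$, for example $\eta=\min\{\tfrac12,\,b_0c_\Omega 2^{1-p_1}\}$. This gives $\widetilde G\ge \tfrac{\eta}{4}M^{2-p_1}$.

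\textbf{Small ball $\|w\|_2\le \varepsilon_\Lambda/2$.} Since $F\ge0$, simply $\widetilde G(w)\ge -\eta M^{2-p_1}\|w\|_2^2\ge -\tfrac{\eta}{4}\varepsilon_\Lambda^2M^{2-p_1}$. No choice of $\varepsilon_\Lambda$ is needed here.

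\textbf{Sphere $\|w\|_2=\varepsilon_\Lambda$.} Using the upper bound,
\[
\widetilde G(w)\le M^{2-p_1}\Big(-\eta\varepsilon_\Lambda^2+C(C\Lambda^{d/4})^{p_1}\varepsilon_\Lambda^{p_1}+C'(C\Lambda^{d/4})^{p_k}\varepsilon_\Lambda^{p_k}\Big),
\]
again with $M\ge1$. We need both nonlinear terms to be at most $\tfrac{\eta}{4}\varepsilon_\Lambda^2$ each, which requires
\[
\varepsilon_\Lambda^{p_1-2}\lesssim \eta\,\Lambda^{-dp_1/4}\quad\text{and}\quad \varepsilon_\Lambda^{p_k-2}\lesssim \eta\,\Lambda^{-dp_k/4}.
\]
Since $\varepsilon_\Lambda<1$ and $\eta<1$, it suffices to take
\[
\varepsilon_\Lambda=C\Lambda^{-\kappa}\eta^{1/(p_1-2)},\qquad \kappa=\max\Big\{\frac{dp_1}{4(p_1-2)},\,\frac{dp_k}{4(p_k-2)}\Big\},
\]
with $C$ small depending on $\Omega,p_1,p_k,b,b_0$. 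The second condition also holds for this choice: because $p_k>p_1$ and $\varepsilon_\Lambda<1$, we have $\varepsilon_\Lambda^{p_k-2}\le\varepsilon_\Lambda^{p_1-2}\cdot(\text{small})$, and the exponent $\eta^{(p_k-2)/(p_1-2)}\le\eta$. Shrinking $C$ further also ensures $\varepsilon_\Lambda<\tfrac12$. The result is $\widetilde G\le-\tfrac{\eta}{2}\varepsilon_\Lambda^2M^{2-p_1}$.

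The main obstacle is this last case: it requires the reverse Hölder estimate \Cref{lem:sogges estimate}, because the $L^{p}$ norms of eigenfunctions in $K_\Lambda$ must be controlled by the $L^2$ norm, uniformly over the eigenspace. That control is exactly the source of the polynomial loss $\Lambda^{-\kappa}$ in $\varepsilon_\Lambda$. Some care is also needed to keep track that $\kappa$ depends only on $d,p_1,p_k$, and that the constants do not depend on $M$, which uses $M\ge1$.
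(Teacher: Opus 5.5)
Your proposal is correct and follows essentially the same route as the paper. You factor out $M^{2-p_1}$ via the choice $\lambda=\Lambda+2\eta M^{2-p_1}$, use $F\ge b_0|s|^{p_1}$ plus H\"older on $\|w\|_2=\tfrac12$, use $F\ge 0$ on the small ball, and use the reverse H\"older bound of \Cref{lem:sogges estimate} on the sphere $\|w\|_2=\varepsilon_\Lambda$. The only difference is cosmetic: you keep the $p_1$ and $p_k$ terms separate and impose two smallness conditions on $\varepsilon_\Lambda$, giving the smaller admissible exponent $\kappa=\frac{dp_1}{4(p_1-2)}$, whereas the paper first uses $M>1$ and $\|w\|_2\le 1$ to collapse all higher powers into one bound $C_1\Lambda^{dp_k}\|w\|_2^{p_1}$ and reads off $\eta$ and $\varepsilon_\Lambda$ from a single two-sided inequality (both arguments tacitly use $M\ge 1$ and $\Lambda\ge 1$, or absorb $\Lambda\ge\Lambda_1(\Omega)$ into the constants).
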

\begin{proof}
    By the choice of $\lambda$, we see that 
        \begin{align}
            \widetilde{G}(w) = M^{2-p_1} \left( -\eta \|w \|_2 ^2 + M^{p_1} \int_{\Omega} F(M^{-1}w) \ \de x \right).
        \end{align}
    By \Cref{lem:sogges estimate}, there is a constant $C$ depending on $\Omega$, such that for any $r \geq 2$ and any $w \in K_{\Lambda}$,  
        \begin{align}\label{eq:equivalent norm}
            \|w\|_r \leq C \Lambda^d \|w\|_2 .
        \end{align}
    Hence, when $\|w\|_2 \leq 1 $ and $M >1$, we know that 
        \begin{align}
            \begin{split}
                M^{p_1} \int_{\Omega} F(M^{-1}w) \ \de x &\leq C_1  M^{p_1} \left( M^{-p_1} \|w\|_{p_1} ^{p_1} + M^{-p_2} \|w\|_{p_2} ^{p_2} + \dots + M^{-p_k} \|w\|_{p_k} ^{p_k}\right)
                \\  &\leq  C_1  M^{p_1} \Lambda^{d p_k} \left( M^{-p_1} \|w\|_{2} ^{p_1} + M^{-p_2} \|w\|_{2} ^{p_2} + \dots + M^{-p_k} \|w\|_{2} ^{p_k}\right)
                \\  &\leq C_1  M^{p_1} \Lambda^{d p_k}\left( M^{-p_1} \|w\|_{2} ^{p_1}\right) = C_1 \Lambda^{d p_k} \|w\|_{2} ^{p_1},
            \end{split}
        \end{align}
where the constant $C_1>1$ varies from line by line but only depend on $\Omega,p_1,p_k,b$. Also, we have that, by \Cref{a:superlinearity},
        \begin{align}
            M^{p_1} \int_{\Omega} F(M^{-1}w) \ \de x \geq b_0  \int_{\Omega} |w|^{p_1} \ \de x \geq C_2 \|w\|_2 ^{p_1},
        \end{align}
for a constant $C_2<1$ depending on $b_0,\Omega$ by the H\"{o}lder inequality. Combine these two inequalities, we see that 
    \begin{align}\label{eq:order tilde G}
         M^{2-p_1} \|w\|_2 ^2 \left( -\eta + C_2 \|w\|_2 ^{p_1 - 2} \right) \leq \widetilde{G}(w) \leq M^{2-p_1} \|w\|_2 ^2 \left( -\eta + C_1 \Lambda^{d p_k} \|w\|_2 ^{p_1 - 2} \right).
    \end{align}

We now pick 
    \begin{align}
        \eta = \frac{C_2}{2^{p_1 - 1}} < 1.
    \end{align}
When $\|w\|_2 = \frac{1}{2}$, $ -\eta + C_2 \|w\|_2 ^{p_1 - 2} = \eta$, and hence $\widetilde{G}(w)  \geq (\frac{\eta}{4})M^{2-p_1}>0$. Next, we choose
    \begin{align}
        \varepsilon_{\Lambda} =\left( \frac{\eta}{2 C_1 \Lambda^{d p_k}} \right)^{1/(p_1-2)}.
    \end{align}
When $\|w\|_2 = \varepsilon_{\Lambda}$, $-\eta + C_1 \Lambda^{d p_k}\|w\|_2 ^{p_1 - 2} \leq  -\eta+ \frac{\eta}{2}  = -\frac{\eta}{2}$, and hence $\widetilde{G}(w)  \leq  -(\frac{\eta}{2} \varepsilon_{\Lambda} ^2 )M^{2-p_1}<0$. Finally, when $\|w\|_2 \in [0,\frac{\varepsilon_{\Lambda}}{2}]$, \eqref{eq:order tilde G} indicates that $\widetilde{G}(w) \geq- \eta  \|w\|_2 ^2 M^{2-p_1} \geq -(\frac{\eta}{4} \varepsilon_{\Lambda} ^2 )M^{2-p_1}$.
\end{proof}

\begin{lemma}\label{lem:super small tilde R}
    Adopt the assumptions of \Cref{lem:solve simplified 1}. There exists a constant $C$ depending on $\Omega,p_1,p_k,b$, such that for any $w \in K_{\Lambda}$ with $\|w\|_2 \leq 1$,
        \begin{align}
            |\widetilde{R}(w) |\leq C\Lambda^{dp_k}\left( \|\dphi_{\lambda}(w)\|_V ^2 + M^{2-p_1} \|\dphi_{\lambda}(w)\|_V \right).
        \end{align}
\end{lemma}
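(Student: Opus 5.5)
We split $\widetilde R(w)$ into its quadratic part and its nonlinear part and bound each in turn. Write $\phi = \dphi_{\lambda}(w)$.

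\emph{Quadratic part.} We have
\[
\Bigl|\int_\Omega \tfrac12|\nabla\phi|^2 - \tfrac{\lambda}{2}\phi^2\Bigr| \le \tfrac12\|\phi\|^2 + \tfrac{\lambda}{2}\|\phi\|_2^2 \le C(1+\lambda)\|\phi\|_V^2 .
\]
The second step uses Poincaré's inequality $\|\phi\|_2^2 \le \Lambda_1^{-1}\|\phi\|^2$. Since $|\lambda-\Lambda|<\delta_\Lambda\le\Lambda$ and $d p_k\ge 1$, the factor $1+\lambda$ is absorbed into $C\Lambda^{dp_k}$.

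\emph{Nonlinear part.} By the mean value theorem, for some $\theta(x)\in(0,1)$,
\[
M^2\bigl[F(M^{-1}(w+\phi)) - F(M^{-1}w)\bigr] = M f\bigl(M^{-1}(w+\theta\phi)\bigr)\,\phi .
\]
By Remark~\ref{rem:f inequality}, its modulus is at most
\[
C M^{2-p_1}|w+\theta\phi|^{p_1-1}|\phi| + C' M^{2-p_k}|w+\theta\phi|^{p_k-1}|\phi| .
\]
Integrate and apply Hölder with exponents $(p_j, p_j/(p_j-1))$ for $j=1,k$. This gives
\[
\int |w+\theta\phi|^{p_j-1}|\phi| \le \bigl(\|w\|_{p_j}+\|\phi\|_{p_j}\bigr)^{p_j-1}\|\phi\|_{p_j}.
\]
We then control each norm in turn.
\begin{itemize}
\item For $w$, Lemma~\ref{lem:sogges estimate} in the form \eqref{eq:equivalent norm} gives $\|w\|_{p_j} \le C\Lambda^{d}\|w\|_2 \le C\Lambda^d$.
\item For $\phi$, the embedding $V\hookrightarrow L^{p_j}$ gives $\|\phi\|_{p_j}\le C\|\phi\|_V$. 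This is Sobolev when $p_k\le 2d/(d-2)$ or $d=2$, and Hölder with $q>p_k$ on the bounded $\Omega$ otherwise.
\item Lemma~\ref{lem:solve simplified 1} gives $\|\phi\|_V\le 1$ under its assumptions, so $(\|w\|_{p_j}+\|\phi\|_{p_j})^{p_j-1} \le C\Lambda^{d(p_j-1)} \le C\Lambda^{dp_k}$.
\item Finally $M>1$ gives $M^{2-p_k}\le M^{2-p_1}$.
\end{itemize}
Together these bound the nonlinear part by $C\Lambda^{dp_k}M^{2-p_1}\|\phi\|_V$. The constant $C$ depends only on $\Omega,p_1,p_k,b$.

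Adding the two bounds gives the claim. The only delicate point is making the power of $\Lambda$ uniform. To handle it, use $\|w\|_2\le1$ and $\|\phi\|_V\le 1$ to dominate every $\Lambda^{d(p_j-1)}$ and the factor $\lambda$ by the single factor $\Lambda^{dp_k}$. Also check that no constant hidden in the $V$-embeddings depends on $\Lambda$; they depend only on $d$, $p_k$ and $\Omega$. The remaining steps are routine Hölder estimates.
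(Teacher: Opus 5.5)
Your proposal is correct and follows essentially the same route as the paper. The quadratic part is handled by Poincar\'e; for the nonlinear part you use the mean value theorem, Remark~\ref{rem:f inequality}, the reverse H\"older bound \eqref{eq:equivalent norm} for $w$, and $\|\widetilde\phi_\lambda(w)\|_V<1$ from Lemma~\ref{lem:solve simplified 1}. The only difference is cosmetic: you pair $\phi$ with $L^{p_j}$ and use $V\hookrightarrow L^{p_j}$, whereas the paper pairs it with $L^q$. Your choice covers every case of Definition~\ref{def:space V} uniformly and keeps the exponent on $w$ above $2$, where \eqref{eq:equivalent norm} applies directly.
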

\begin{proof}
    First, because $|\lambda - \Lambda| \leq 1$, in $\widetilde{R}(w)$, 
        \begin{align}
            \left| \int_{\Omega} \frac{1}{2} |\nabla \dphi_{\lambda}(w)|^2 - \frac{\lambda}{2} (\dphi_{\lambda}(w)) ^2 \de x \right| \leq C \Lambda \|\dphi_{\lambda}(w)\|_V ^2 ,
        \end{align}
    by \Cref{def:space V} and the Poincar{\'e} inequality, where $C$ only depends on $\Omega$.

    Next, we consider the nonlinear term 
        \begin{align}
             M^2 \int_{\Omega}  \left[ F(M^{-1}(w + \dphi_{\lambda}(w)))  - F(M^{-1}w)\right]\ \de x.
        \end{align}
    By the mean value theorem, for some $\theta \in (0,1)$ and $\zeta = w + \theta \dphi_{\lambda}(w)$, and we have that
    \begin{align}
        \begin{split}
            &\left| F(M^{-1}(w + \dphi_{\lambda}(w)))  - F(M^{-1}w)\right| \leq \left|f(M^{-1}\zeta) \right| \left|M^{-1} \dphi_{\lambda}(w) \right| 
            \\  &\leq C\left[ |M^{-1} \zeta|^{p_1 - 1} +  | M^{-1} \zeta|^{p_k - 1} \right] M^{-1} |\dphi_{\lambda}(w)|
            \\  &\leq C\left[ |w|^{p_1 - 1} + |w|^{p_k - 1}+ |\dphi_{\lambda}(w)|^{p_1 - 1} +  |\dphi_{\lambda}(w)|^{p_k - 1} \right] M^{-p_1} |\dphi_{\lambda}(w)|,
        \end{split}
    \end{align}
using \Cref{rem:f inequality}, where $C$ depends on $b, p_1, p_k$. We estimate the above terms separately. By H{\"o}lder inequality and \eqref{eq:equivalent norm},
    \begin{align}
        \int_{\Omega} |w|^{p_1 - 1} |\dphi_{\lambda}(w)| \de x \leq \|w\|_{\frac{(p_1 - 1)q}{q-1}} ^{p_1 - 1} \|\dphi_{\lambda}(w)\|_q \leq C \Lambda^{dp_1} \|w\|_{2} ^{p_1 - 1} \|\dphi_{\lambda}(w)\|_q =  C \Lambda^{dp_1} \|\dphi_{\lambda}(w)\|_q,
    \end{align}
where $C$ only depends on $\Omega$.
We can similarly obtain $\int_{\Omega} |w|^{p_k - 1} |\dphi_{\lambda}(w)| \de x \leq C \Lambda^{dp_k}\|\dphi_{\lambda}(w)\|_q$ for some $C$ only depending on $\Omega$. For the other two terms, because $p_1 < p_k < q$, we see that, by H{\"o}lder inequality, $\|\dphi_{\lambda}(w)\|_{p_1} ^{p_1} \leq C \|\dphi_{\lambda}(w)\|_{q} ^{p_1} \leq C \|\dphi_{\lambda}(w)\|_{q}$, because $\|\dphi_{\lambda}(w)\|_{V} < 1$ as in \Cref{lem:solve simplified 1}. Similarly, $\|\dphi_{\lambda}(w)\|_{p_k} ^{p_k} \leq C \|\dphi_{\lambda}(w)\|_{q}$. Combine the above estimates, we have that 
    \begin{align}
        M^2 \int_{\Omega}  \left[ F(M^{-1}(w + \dphi_{\lambda}(w)))  - F(M^{-1}w)\right]\ \de x \leq C M^{2-p_1} \Lambda^{dp_k}\|\dphi_{\lambda}(w)\|_{q} \leq C M^{2-p_1} \Lambda^{dp_k}\|\dphi_{\lambda}(w)\|_{V},
    \end{align}
where $C$ depends on $b, p_1, p_k,\Omega$.

\end{proof}

\begin{proof}[Proof of \Cref{lem:solve simplified 2}]
First, by \eqref{eq:equivalent norm} and the fact that $\|w\|^2 = \Lambda\|w\|_2 ^2$, there is a constant $C$ depending on $\Omega$, such that $\|w\|_V \leq C \Lambda^d \|w\|_2$ for any $w \in K_{\Lambda}$. Hence, when $\|w\|_2 \leq 1$,  $\|w\|_V \leq C \Lambda^d$, and we choose the parameter $W=C \Lambda^d$ in \Cref{lem:solve simplified 1}. By \Cref{lem:solve simplified 1}, we know that $\|\dphi_{\lambda}(w)\|_V \leq C' M^{2-p_1} $ for some $C'$ depending on $\Lambda,\Omega,p_1,p_k,b,b_0$ when $M>(C')^{\frac{1}{p_1 - 2}}$. Hence, by \Cref{lem:super small tilde R}, we see that $|\widetilde{R}(w) | \leq C \Lambda^{dp_k} M^{4-2p_1}$ for some $C$ depending on $\Omega,p_1,p_k,b$. By \Cref{lem:critical point for tilde G}, we see that if we further assume that $C\Lambda^{dp_k} M^{2-p_1} < \frac{\eta}{8} \varepsilon_{\Lambda} ^2 $, we have that $|\widetilde{R}(w) | < (\frac{\eta}{8} \varepsilon_{\Lambda} ^2) M^{2-p_1} $. Hence, we have that when $\|w\|_2 \in [0,\frac{\varepsilon_{\Lambda}}{2}]$,
        \begin{align}
            \widetilde{J}(w) \geq \widetilde{G}(w) - |\widetilde{R}(w)| > \widetilde{G}(w) - \left(\frac{\eta}{8} \varepsilon_{\Lambda} ^2 \right)M^{2-p_1} \geq  - \left(\frac{3\eta}{8} \varepsilon_{\Lambda} ^2 \right)M^{2-p_1},
        \end{align}
        when $\|w\|_2 = \varepsilon_{\Lambda}$, 
        \begin{align}
            \widetilde{J}(w) \leq \widetilde{G}(w) + |\widetilde{R}(w)| < \widetilde{G}(w) + \left(\frac{\eta}{8} \varepsilon_{\Lambda} ^2 \right)M^{2-p_1} \leq -\left(\frac{3\eta}{8} \varepsilon_{\Lambda} ^2 \right) M^{2-p_1}<0,
        \end{align}
    and when $\|w\|_2 = \frac{1}{2}$,
        \begin{align}
            \widetilde{J}(w) \geq \widetilde{G}(w) - |\widetilde{R}(w)| \geq \widetilde{G}(w) - \left(\frac{\eta}{8}\right)M^{2-p_1} \geq \left(\frac{\eta}{8}  \right)M^{2-p_1} > 0.
        \end{align}
The above three inequalities indicate that
    \begin{align}
        \min_{\|w\|_2 \in [0,\frac{\varepsilon_{\Lambda}}{2}]} \widetilde{J}(w) > \min_{\|w\|_2 = \varepsilon_{\Lambda}} \widetilde{J}(w), \text{ and }  \min_{\|w\|_2 = \frac{1}{2}} \widetilde{J}(w) > \min_{\|w\|_2 = \varepsilon_{\Lambda}} \widetilde{J}(w).
    \end{align}
Hence,
in the domain of $w \in K_{\Lambda}$ with $\|w\|_2 \leq \frac{1}{2}$, $\widetilde{J}(w)$ must reaches its minimum value at a $\dw_{\lambda} \in K_{\Lambda}$ satisfying $\|\dw_{\lambda}\|_2 \in (\frac{\varepsilon_{\Lambda}}{2} , \frac{1}{2})$. Hence, this $\dw_{\lambda}$ is a critical point of the energy functional $\widetilde{J}(w)$. By \Cref{lem:equivalent critical point}, the function $\du = \dw_\lambda + \dphi_\lambda (\dw_{\lambda})$ is a solution to \eqref{eq:eigen rescaling}.
\end{proof}

\subsection{Proof of \Cref{thm:main bifurcation}}
From \Cref{lem:solve simplified 1} and \Cref{lem:solve simplified 2}, for each $M>C'_\Lambda$ sufficiently large (depending on $\Lambda,\Omega,p_1,p_k,b,b_0$), we obtained $\lambda=\Lambda+2\eta M^{2-p_1}$ with $|\lambda-\Lambda|=2\eta M^{2-p_1}<\delta_\Lambda$, a function $\dw_\lambda\in K_\Lambda$ with $\|\dw_\lambda\|_2^2\in(\varepsilon_\Lambda/2,1/2)$, and $\dphi_\lambda = \dphi_\lambda(\dw_\lambda)\in K_\Lambda^\perp\cap V$ with $\|\dphi_\lambda\|_V\leq M^{2-p_1}(C'_\Lambda)^{p_1-2}$, such that $\du=\dw_\lambda+\dphi_\lambda$ solves the rescaled equation \eqref{eq:eigen rescaling},
and $\dphi_\lambda\perp\dw_\lambda$ in $H_0^1(\Omega)$. Setting $u_\lambda=\du/M$ yields a solution to \eqref{eq:eigen}.
Furthermore, we see that
\begin{align}
    M=\left(\frac{2\eta}{\lambda - \Lambda}\right)^{\frac{1}{p_1 - 2}}.
\end{align}
Set 
    \begin{align}
        w_\lambda=\dw_{\lambda} \cdot (2\eta)^{-\frac{1}{p_1 - 2}}, \quad \phi_\lambda=\dphi_{\lambda} \cdot (2\eta)^{-\frac{1}{p_1 - 2}},
    \end{align}
we see that $u_{\lambda} = (\lambda - \Lambda)^{\frac{1}{p_1 - 2}} \left( w_{\lambda}+ \phi_{\lambda}\right)$.
We can then obtain those constants $\alpha,\beta ,\kappa, \gamma_{\Lambda}$ from the conclusion of \Cref{lem:solve simplified 2}.

%----------------------------------------
\section{Proof of \Cref{thm:main whispering gallery linear eigen}}\label{sec:proof of main whispering gallery linear eigen}

First, we recall some basic facts of Dirichlet Laplacian eigenfunctions of the unit ball $\Omega = B_1 \subseteq \bR^d$ ($d \geq 2$). See, for example, $\S 5$ in Chapter V of \cite{CH89}, or \cite{NG2013,kuperman2026clamped}. We define notations $D \coloneqq \frac{d}{2} - 1 $ and $\nu \coloneqq n + D$. For any $\mu \geq 0$, we use $J_\mu(s)$ to denote the Bessel function of the first kind of order $\mu$ and use $J_{\mu} '(s)$ to denote its derivative. For any $\mu \geq 0$, denote $j_{\mu}$ as the first positive root of $J_{\mu}(s)$ and denote  $j_{\mu} '$ as the first positive root of $J_{\mu} ' (s)$. That is,
\begin{align}\label{def:j_mu}
        j_{\mu} \coloneqq \sup \{ t \ | \ J_{\mu} (s) \neq 0 \text{ for any } s \in (0,t), \ t>0 \},
\end{align}
and we similarly define $j_{\mu} '$.

Then the following functions form a basis for Dirichlet Laplacian eigenfunctions with eigenvalue $\Lambda_n \coloneqq j_{\nu} ^2$ on $\Omega = B_1$:
    \begin{align}\label{e:sharp eigenfunction}
        w_{n,m}(x) \coloneqq r^{-D} J_{\nu}(j_{\nu} r) Y_{n,m} (\theta), \ 0<r \leq 1, \ \theta \in \bS ^{d-1}, \ n \in \bZ_{\geq 0}, \ 1 \leq m \leq N_n,
    \end{align}
with eigenvalue $\Lambda_n $, i.e., $\Delta w_{n,m}+ \Lambda_n w_{n,m} = 0$, where we used the spherical coordinates for $x =(r,\theta) \in \R^d$, and $Y_{n,m} (\theta)$ is a spherical harmonic of degree $n$, i.e., $\Delta_{\bS^{d-1}} Y_{n,m} + n(n+d-2) Y_{n,m} = 0$, and satisfies that $\int_{\bS^{d-1}} Y_{n,m} ^2(\theta) \de \theta = 1$. Here, $\{Y_{n,m}\}_{m=1} ^{N_n}$ form an orthonormal basis for spherical harmonics of degree $n$ on the sphere $\bS^{d-1}$, and $N_n = \binom{n+d-1}{d-1} - \binom{n+d-3}{d-1}$ is the dimension of this linear space. $\{w_{n,m}\}_{m=1} ^{N_n}$ then form an orthogonal basis for Dirichlet eigenfunctions with the eigenvalue $\Lambda_n$ of the unit ball $\Omega=B_1$, and hence $K_{\Lambda_n} = \mathrm{Span} \{w_{n,m}\}_{m=1} ^{N_n}$. For any $w \in K_{\Lambda_n}$, there is a unique $a = (a_1,a_2,\dots,a_{N_n} ) \in \R^{N_n}$, such that 
    \begin{align}
        w = \sum_{m=1} ^{N_n} a_m w_{n,m} = r^{-D} J_{\nu}(j_{\nu} r) \sum_{m=1} ^{N_n} a_m Y_{n,m} (\theta) .
    \end{align}
We denote $Y(\theta) \coloneqq  \sum_{m=1} ^{N_n} a_m Y_{n,m} (\theta)$, and thus $w = r^{-D} J_{\nu}(j_{\nu} r) Y(\theta)$.
To simplify the notations, in the following lemmas and proofs, we assume that $w \in K_{\Lambda_n}$ and satisfies $\|a\|_2 ^2 = \sum_{m=1} ^{N_n} a_m ^2 = 1$. Hence, $\|Y\|_{L^2(\bS^{d-1})} ^2 = \sum_{m=1} ^{N_n} a_m ^2 = 1$.

In the following proofs, when we say that $\nu$ is large enough, we mean that $\nu$ is larger than some large universal constant.

\begin{lemma}\label{lem:gradient L2 norm}
    For any $s \geq 0$,
        \begin{align}\label{eq:gradient L2 norm}
            \begin{split}
                \int_{B_s} |\nabla w(x)|^2 \de x &= \frac{j_{\nu} ^2 s^2 - \nu^2}{2} J_{\nu} ^2 (j_{\nu} s) + \frac{j_{\nu} ^ 2 s^2 }{2} \left( J_{\nu} ' (j_{\nu} s) \right) ^2 
                \\  &+ \left(-D J_{\nu}(j_{\nu} s) +  j_{\nu} s J_{\nu} ' (j_{\nu} s)\right) J_{\nu}(j_{\nu} s) .
            \end{split}
        \end{align}
\end{lemma}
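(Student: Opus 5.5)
The plan is to integrate by parts and reduce everything to a one–dimensional Bessel identity. Since $\Delta w + \Lambda_n w = 0$ with $\Lambda_n = j_\nu^2$, Green's identity on $B_s$ gives
\begin{align*}
\int_{B_s} |\nabla w|^2 \,\de x = j_\nu^2 \int_{B_s} w^2 \,\de x + \int_{\partial B_s} w\, \partial_r w \,\de\sigma .
\end{align*}
Write $w = R(r)Y(\theta)$ with $R(r) = r^{-D}J_\nu(j_\nu r)$ and $\|Y\|_{L^2(\bS^{d-1})}=1$. The boundary term then equals $s^{d-1}R(s)R'(s)$. Using $d-1-2D = 1$ and $R'(r) = -D r^{-D-1}J_\nu(j_\nu r) + j_\nu r^{-D} J_\nu'(j_\nu r)$, this becomes
\[
\bigl(-D J_\nu(j_\nu s) + j_\nu s J_\nu'(j_\nu s)\bigr) J_\nu(j_\nu s),
\]
which is exactly the last term in \eqref{eq:gradient L2 norm}.

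For the volume term, polar coordinates give
\[
\int_{B_s} w^2 \,\de x = \int_0^s r^{d-1} r^{-2D} J_\nu^2(j_\nu r)\,\de r = \int_0^s r J_\nu^2(j_\nu r)\,\de r .
\]
Next I will invoke the classical Lommel integral: for $k>0$,
\[
\int_0^s r J_\nu^2(k r)\,\de r = \frac{s^2}{2}\Bigl(J_\nu'(ks)^2 + \bigl(1 - \tfrac{\nu^2}{k^2 s^2}\bigr)J_\nu^2(ks)\Bigr).
\]
To verify it, differentiate the right-hand side in $s$ and use Bessel's equation
\[
x^2 J_\nu'' + x J_\nu' + (x^2-\nu^2)J_\nu = 0.
\]
The derivative collapses to $sJ_\nu^2(ks)$, and both sides vanish at $s=0$ since $\nu \ge 0$; for $\nu=0$ the term $\nu^2 J_\nu^2$ is zero anyway. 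Multiplying by $j_\nu^2$ with $k=j_\nu$ produces
\[
\frac{j_\nu^2 s^2-\nu^2}{2}J_\nu^2(j_\nu s) + \frac{j_\nu^2 s^2}{2}\bigl(J_\nu'(j_\nu s)\bigr)^2,
\]
which gives the first two terms of \eqref{eq:gradient L2 norm}.

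The main technical point is justifying Green's identity on $B_s$ near the origin. This is harmless, because $R(r) \sim c\, r^{n}$ as $r\to0$, so $w$ is smooth; indeed $w$ is a genuine polynomial-type eigenfunction. The normalization $\|a\|_2=1$ is used only to integrate out $Y^2$. For $s>1$ the formula is read with $w$ extended by the same expression, and the computation is identical, although only $s\le 1$ is needed.
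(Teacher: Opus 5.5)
Your proof is correct and follows essentially the same route as the paper. You apply Green's identity on $B_s$, evaluate the boundary term as $s^{d-1}R(s)R'(s)$ using $d-1-2D=1$, reduce the volume term to $j_\nu^2\int_0^s rJ_\nu^2(j_\nu r)\,dr$, and finish with Lommel's integral; the only difference is that you verify Lommel's formula by differentiating via Bessel's equation instead of citing it, which is a harmless self-contained addition.
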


\begin{proof}[Proof of \Cref{lem:gradient L2 norm}]
Denote $R(r)=r^{-D}J_{\nu}(j_{\nu} r)$, and then we see that $w = R(r) Y(\theta)$. By integration by parts,
    \begin{align}
        \int_{B_s}|\nabla w|^2 \de x=-\int_{B_s}w\Delta w \de x+\int_{\partial B_s}w\partial_r w \de \sigma(x).
    \end{align}
The first term is $j_{\nu} ^2\int_{B_s}w^2 \de x$. Moreover, $\int_{B_s}w^2 \de x=\int_0^s r^{d-1}R(r)^2\de r=\int_0^s r J_{\nu}(j_{\nu} r)^2\de r$, because $d-1-2D=1$. The boundary term is
\begin{align}
    \int_{\bS^{d-1}}[R(s)Y_{n,m}(\theta)][R'(s)Y_{n,m}(\theta)]s^{d-1}\de \theta=s^{d-1}R(s)R'(s) \int_{\bS^{d-1}} Y_{n,m} ^2(\theta) \de \theta=s^{d-1}R(s)R'(s).
\end{align}
Now, because $R'(s)=-Ds^{-D-1}J_{\nu}(\lambda s)+s^{-D}\lambda J_{\nu}'(\lambda s)$, the boundary term becomes
    \begin{align}
        s^{d-1}R(s)R'(s)=-DJ_{\nu}(\lambda s)^2+\lambda s J_{\nu}(\lambda s)J_{\nu}'(\lambda s),
    \end{align}
since $d-1-2D=1$. Hence,
    \begin{align}
        \int_{B_s}|\nabla w|^2\,dx=j_{\nu} ^2\int_0^sr J_{\nu}(j_{\nu} r)^2 \de r+j_{\nu} s J_{\nu}(j_{\nu} s)J_{\nu}'(j_{\nu} s)-D J_{\nu} ^2 (j_{\nu} s).
    \end{align}
By Lommel's integral (see for example  Equation (11) in Section 5.11 of Chapter 5 in \cite{W1995} or Lemma 5.4 of \cite{kuperman2026clamped}), 
    \begin{align}
         j_{\nu} ^2 \int_0^s r J_{\nu}(\lambda r)^2 \de r = \int_0 ^{j_{\nu} s} r J_{\nu}(r)^2 \de r = \frac{j_{\nu} ^2 s^2 - (\nu)^2}{2} J_{\nu} ^2 (j_{\nu} s) + \frac{j_{\nu} ^ 2 s^2 }{2} \left( J_{\nu} ' (j_{\nu} s) \right) ^2 .
    \end{align}
This finishes the proof for \Cref{lem:gradient L2 norm}.
\end{proof}

Next, we need the following explicit asymptotics related to $j_{\mu}$ and $j_{\mu} '$. See equation (9.5.14), equation (9.5.16), equation (9.5.18) in Section 9.5 of \cite{AS1964} or  Appendix A of \cite{NG2013} or equation (30) of \cite{keller1960asymptotic} for the following \Cref{lem:j_mu asymptotics}.
\begin{lemma}[Section 9.5 of \cite{AS1964}]\label{lem:j_mu asymptotics}
    For any $\mu \geq 0$ large enough,
        \begin{align}\label{e:j_mu asymptotics}
            \mu + (1.855) \cdot \mu^{\frac{1}{3}}< j_\mu < \mu + (1.856) \cdot \mu^{\frac{1}{3}},
        \end{align}
    and 
        \begin{align}\label{e:j_mu ' asymptotics}
            \mu + (0.808) \cdot \mu^{\frac{1}{3}}< j_\mu ' < \mu + (0.809) \cdot \mu^{\frac{1}{3}},
        \end{align}
and
\begin{align}\label{eq:J_mu ' at zero asymptotics}
            (-1.114) \cdot \mu^{-\frac{2}{3}} < J_\mu ' (j_{\mu}) < (-1.113 ) \cdot \mu^{-\frac{2}{3}},
\end{align}
and
\begin{align}\label{eq:J_mu at max asymptotics}
            (0.674) \cdot \mu^{-\frac{1}{3}} < J_\mu (j_{\mu} ') < (0.675) \cdot \mu^{-\frac{1}{3}}.
\end{align}
\end{lemma}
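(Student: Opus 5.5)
This lemma is classical and the paper only cites it, so the plan is to derive all four inequalities from the transition-region (Airy-type) asymptotics of $J_\mu$ near $s=\mu$. Each bracket is a strict inequality with a margin of about $10^{-3}$ around a limiting constant, so it suffices to show that each quantity, suitably rescaled, converges to that constant with an error $o(1)$ as $\mu\to\infty$.

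\textbf{Step 1: the expansion.} I would take as input the Nicholson/Olver expansion, uniform for $t$ in compact sets:
\begin{align*}
J_\mu(\mu+t\mu^{1/3}) = 2^{1/3}\mu^{-1/3}\,\mathrm{Ai}(-2^{1/3}t) + O(\mu^{-1}),
\end{align*}
\begin{align*}
J_\mu'(\mu+t\mu^{1/3}) = -2^{2/3}\mu^{-2/3}\,\mathrm{Ai}'(-2^{1/3}t) + O(\mu^{-4/3}).
\end{align*}
One can obtain this from Olver's uniform expansion in terms of $\mathrm{Ai}(\mu^{2/3}\zeta)$, which comes with explicit error bounds, by Taylor expanding $\zeta(z)$ near $z=1$.

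\textbf{Step 2: locating the zeros.} Let $a_1\approx-2.33811$ be the first zero of $\mathrm{Ai}$ and $a_1'\approx-1.01879$ the first zero of $\mathrm{Ai}'$; both are simple zeros.
\begin{itemize}
\item For $j_\mu$: on a compact $t$-interval around $t_0=2^{-1/3}|a_1|\approx1.85576$, the function $t\mapsto\mu^{1/3}J_\mu(\mu+t\mu^{1/3})$ converges in $C^1$ to $2^{1/3}\mathrm{Ai}(-2^{1/3}t)$, which has a simple zero at $t_0$. A sign-change / implicit-function argument then gives a zero at $t_0+O(\mu^{-2/3})$.
\item For $j_\mu'$: the same argument applied to $J'_\mu$ gives a zero at $t_0'=2^{-1/3}|a_1'|\approx0.80862$.
\end{itemize}
Since $1.855<1.85576<1.856$ and $0.808<0.80862<0.809$, the bounds \eqref{e:j_mu asymptotics} and \eqref{e:j_mu ' asymptotics} follow for $\mu$ large, provided these zeros are the \emph{first} positive zeros.

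\textbf{Step 3: values at the zeros.} Substituting $t=t_0+O(\mu^{-2/3})$ into the $J'_\mu$ expansion gives
\begin{align*}
J_\mu'(j_\mu)=-2^{2/3}\mathrm{Ai}'(a_1)\,\mu^{-2/3}(1+o(1)), \qquad 2^{2/3}\mathrm{Ai}'(a_1)\approx1.11310,
\end{align*}
which yields \eqref{eq:J_mu ' at zero asymptotics}. Likewise
\begin{align*}
J_\mu(j'_\mu)=2^{1/3}\mathrm{Ai}(a_1')\,\mu^{-1/3}(1+o(1)), \qquad 2^{1/3}\mathrm{Ai}(a_1')\approx0.67489,
\end{align*}
which yields \eqref{eq:J_mu at max asymptotics}.

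\textbf{Main obstacle: these are the first zeros.} I need to rule out any zero of $J_\mu$ or $J'_\mu$ before the ones found in Step 2.
\begin{itemize}
\item On $(0,\mu]$, $J_\mu>0$ and $J'_\mu>0$. This follows from the standard facts $j'_\mu>\mu$ and $j_\mu>j'_\mu$, which can be read off from Bessel's equation: $J''_\mu=-J'_\mu/s-(1-\mu^2/s^2)J_\mu$, so while $s<\mu$ and $J_\mu>0$ the function cannot have a local maximum.
\item On $[\mu,\mu+t\mu^{1/3}]$ with $t<t_0$ (respectively $t<t_0'$), the uniform expansion shows $J_\mu$ (respectively $J'_\mu$) is bounded below by $c\mu^{-1/3}$ (respectively $c\mu^{-2/3}$). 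This uses that $\mathrm{Ai}(-x)>0$ for $x<|a_1|$ and $\mathrm{Ai}'(-x)>0$ for $x<|a_1'|$.
\end{itemize}
The uniformity of the expansion near $t=0$ is the delicate part, so there I would rely on Olver's error bounds directly rather than on the heuristic Nicholson form. Finally, the numerical margins are fixed, while all errors are $O(\mu^{-2/3})$ in relative terms, so everything holds for $\mu$ larger than a universal constant.
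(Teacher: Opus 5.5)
Your proposal is correct, but it takes a different route from the paper. The paper gives no argument of its own: it cites the large-order expansions in Section 9.5 of Abramowitz--Stegun (9.5.14, 9.5.16, 9.5.18, and the companion formula for $J_\nu(j'_{\nu,1})$). Their leading coefficients $1.8557571$, $0.8086165$, $-1.1131028$, $0.6748851$ lie strictly inside the four brackets, so the bounds follow once the lower-order terms of those series drop below the margins of order $10^{-4}$.

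You instead rebuild these leading coefficients from the transition-region Airy asymptotics, identifying them as $2^{-1/3}|a_1|$, $2^{-1/3}|a_1'|$, $-2^{2/3}\mathrm{Ai}'(a_1)$ and $2^{1/3}\mathrm{Ai}(a_1')$; your numerical values are right. You also make explicit a step the citation takes for granted: that the zeros found near $t_0$ and $t_0'$ are the \emph{first} positive zeros of $J_\mu$ and $J_\mu'$. Your route buys a self-contained account of where the constants come from and of the first-zero identification. Its cost is that it needs uniform Airy asymptotics as input, which is in fact how the Abramowitz--Stegun expansions are obtained. Like the citation, it gives no explicit threshold for ``$\mu$ large enough'' unless you actually track Olver's error bounds.

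Three small corrections:
\begin{itemize}
\item The Airy fact you quote has the wrong sign: $\mathrm{Ai}'(-x)<0$, not $>0$, for $0\le x<|a_1'|$ (e.g.\ $\mathrm{Ai}'(0)\approx-0.259$). It is the minus sign in your expansion of $J_\mu'$ that turns this into $J_\mu'\ge c\mu^{-2/3}>0$ on $[\mu,\mu+(t_0'-\epsilon)\mu^{1/3}]$.
\item Nothing is delicate near $t=0$. You only use the Nicholson-type formula on a fixed compact $t$-interval containing $0$, where it is uniform, and it already gives $J_\mu(\mu)>0$ and $J_\mu'(\mu)>0$.
\item For $j_\mu'$, $C^1$ convergence of $\mu^{2/3}J_\mu'(\mu+t\mu^{1/3})$ would require asymptotics for $J_\mu''$. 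These are available from Bessel's equation, giving $\mu J_\mu''(\mu+t\mu^{1/3})\to-2^{4/3}t\,\mathrm{Ai}(-2^{1/3}t)$. However, a plain sign-change argument with a fixed small $\epsilon$ is all you need for the four stated bounds.
\end{itemize}
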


\begin{lemma}\label{lem:supercritical region}
    For any $\mu \geq 0$ large enough,
        \begin{align}\label{eq:supercritical region j_mu}
            0<J_{\mu}(\mu-\mu^{\frac{2}{3} }) < 2^{-\frac{1}{3}\mu ^{\frac{1}{3}}},
        \end{align}
    and 
        \begin{align}\label{eq:supercritical region j_mu '}
            \left|J_{\mu} '(\mu-\mu^{\frac{2}{3} }) \right|< \mu^{-\frac{1}{2}} 2^{-\frac{1}{3}\mu ^{\frac{1}{3}}}.
        \end{align}
\end{lemma}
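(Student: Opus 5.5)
We bound $J_\mu$ and $J_\mu'$ at $x=\mu-\mu^{2/3}<\mu$ using the Debye (Carlini) representation in the non-oscillatory region $0<x<\mu$. There $J_\mu$ is positive and increasing: the first positive zero of $J_\mu'$ exceeds $\mu$, and $J_\mu>0$ on $(0,j_\mu)$ with $j_\mu>\mu$. The standard bound we use is the Carlini-type inequality for $0<x\le\mu$:
\begin{align}
    0< J_\mu(\mu\,\mathrm{sech}\,\alpha) \le \frac{e^{-\mu(\alpha-\tanh\alpha)}}{\sqrt{2\pi\mu\tanh\alpha}}\,(1+O(\mu^{-1}\coth^3\alpha)),
\end{align}
i.e.\ the leading Debye asymptotic, uniformly valid as long as $\mu\tanh^3\alpha\to\infty$. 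Alternatively, we may use Watson's explicit inequality $J_\mu(\mu x)\le x^\mu e^{\mu\sqrt{1-x^2}}/(1+\sqrt{1-x^2})^\mu$ for $0<x\le 1$, which needs no error term and is cleaner. We use the latter for $J_\mu$.

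Set $x=1-\mu^{-1/3}$ and $t=\sqrt{1-x^2}=\sqrt{2\mu^{-1/3}-\mu^{-2/3}}\approx \sqrt2\,\mu^{-1/6}$. Then
\begin{align}
    \log J_\mu(\mu x)\le \mu\bigl(\log x + t - \log(1+t)\bigr) = -\mu\,(\operatorname{artanh} t - t),
\end{align}
using $\log x-\log(1+t)=\tfrac12\log\frac{1-t}{1+t}$ since $x^2=(1-t)(1+t)$. Expanding $\operatorname{artanh}t-t=t^3/3+O(t^5)$ gives an exponent of
\begin{align}
    -\mu\bigl(\tfrac{2\sqrt2}{3}\mu^{-1/2}+O(\mu^{-5/6})\bigr),
\end{align}
so $\log J_\mu \le -0.94\,\mu^{1/2}$ for large $\mu$. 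This is far smaller than $-\tfrac13\mu^{1/3}\log 2$, which proves \eqref{eq:supercritical region j_mu}; positivity holds because $x\mu<\mu<j_\mu$.

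For the derivative, we use the recurrence $J_\mu'(z)=J_{\mu-1}(z)-\frac{\mu}{z}J_\mu(z)$. Both terms are nonnegative for $z<\mu-1$, so
\begin{align}
    |J_\mu'(z)|\le J_{\mu-1}(z)+\tfrac{\mu}{z}J_\mu(z).
\end{align}
With $z=\mu-\mu^{2/3}$ we have $\mu/z\le 2$, and $J_{\mu-1}(z)$ is estimated by the same Watson bound with $\mu'=\mu-1$ and argument ratio $z/(\mu-1)=1-(\mu^{2/3}-1)/(\mu-1)$, whose defect is still $\asymp\mu^{-1/3}$. This yields $|J_\mu'(z)|\le 3e^{-0.9\mu^{1/2}}\le \mu^{-1/2}2^{-\frac13\mu^{1/3}}$ for large $\mu$, since the polynomial factor $\mu^{-1/2}$ is absorbed by the gap between $\mu^{1/2}$ and $\mu^{1/3}$ in the exponents. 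We could alternatively use the monotonicity argument $J_\mu'=J_{\mu-1}-\frac{\mu}{z}J_\mu\ge 0$ (via the Wronskian/ratio bound $J_{\mu-1}/J_\mu\ge\mu/z$ in the evanescent region) so that $0\le J_\mu'\le J_{\mu-1}$, but this is not needed.

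The main obstacle is justifying the Watson inequality, equivalently a rigorous Debye bound, for real non-integer order $\mu$. We would cite Watson \S8.5 or, if needed, derive it from the power series $J_\mu(\mu x)=\sum_k \frac{(-1)^k(\mu x/2)^{\mu+2k}}{k!\,\Gamma(\mu+k+1)}$, bounding it by $\frac{(\mu x/2)^\mu}{\Gamma(\mu+1)}e^{\mu^2x^2/(4(\mu+1))}$ and applying Stirling. This cruder bound gives an exponent of $\mu(\log x+1-\log1+\tfrac{x^2}{4}\cdots)$, which is not sharp enough near $x=1$. So we rely on the cited Watson/Debye inequality, and the rest is the elementary expansion above.
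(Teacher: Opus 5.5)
Your proof is correct, but it takes a different route from the paper's.

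\textbf{What the paper does.} The paper does not prove the bound on $J_\mu(\mu-\mu^{2/3})$; it cites Lemma A.1 of Nguyen--Grebenkov and Kr\"oger. For the derivative it invokes Siegel's derivative inequality (Watson \S 8.5, eq.\ (10)),
\[
|J_\mu'(\mu x)|\le \frac{(1+x^2)^{1/4}x^\mu e^{\mu\sqrt{1-x^2}}}{x\sqrt{2\pi\mu}\,(1+\sqrt{1-x^2})^\mu}.
\]
The $(2\pi\mu)^{-1/2}$ prefactor supplies the $\mu^{-1/2}$ in the statement. The paper then takes from Kr\"oger's proof the bound $2^{-\mu^{\varepsilon}/3}$ for the exponential factor and sets $\varepsilon=1/3$. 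That lands exactly on the stated constant, using the endpoint $x=1-\mu^{-1/3}$ of Kr\"oger's open interval.

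\textbf{What you do.} You use only the Kapteyn--Siegel bound for $J_\mu(\mu x)$ itself. You evaluate its exponential factor exactly through $\log x-\log(1+t)=-\mathrm{artanh}\,t$, which gives $J_\mu(\mu x)\le e^{-\mu(\mathrm{artanh}\,t-t)}\le e^{-\mu t^3/3}$. You then handle $J_\mu'$ through the recurrence $J_\mu'=J_{\mu-1}-\frac{\mu}{z}J_\mu$, applying the same inequality at order $\mu-1$.

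\textbf{What each route buys.}
\begin{itemize}
\item Your version proves both estimates, including the first one the paper only cites, from a single standard inequality.
\item It gives the much stronger decay $e^{-c\mu^{1/2}}$, consistent with the Airy transition-region asymptotics. The factor $\mu^{-1/2}$ and the constant $\tfrac13$ are then absorbed with a wide margin, and no endpoint issue arises.
\item The paper's route is shorter but rests on two external estimates and has no slack.
\end{itemize}

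\textbf{Two small remarks.}
\begin{itemize}
\item The Debye display with the $(1+O(\mu^{-1}\coth^3\alpha))$ factor is an asymptotic formula, not an inequality. Since you discard it in favour of Watson's bound, this does no harm.
\item The sign discussion for the two recurrence terms is unnecessary. The triangle inequality plus the absolute-value form of the Siegel bound already suffice.
\end{itemize}
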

\begin{proof}
    \eqref{eq:supercritical region j_mu} is proved in Lemma A.1 of \cite{NG2013} or \cite{kroger1996ground}, so we omit its proof. For \eqref{eq:supercritical region j_mu '}, according to the equaiton (10) in Section 8.5 of Chapter 8 in \cite{W1995} (or \cite{siegel1953inequality} and Section 10.14 of \cite{NIST:DLMF}), we have that for any $0 < x \leq 1$,
        \begin{align}
            \left| J_{\mu}'(\mu x) \right| \leq  \frac{(1+x^2)^{\frac{1}{4}} x^{\mu} e^{\mu \sqrt{1-x^2}}}{x \sqrt{(2\pi \mu)}(1+\sqrt{1-x^2})^{\mu}}.
        \end{align}
    The proof for \eqref{eq:supercritical region j_mu} in \cite{kroger1996ground} shows that for any $\varepsilon \in (0,\frac{2}{3})$ and any $x \in (0,1-\mu^{\varepsilon-\frac{2}{3}})$,
        \begin{align}
             \frac{ x^{\mu} e^{\mu \sqrt{1-x^2}}}{(1+\sqrt{1-x^2})^{\mu}} < 2^{-\frac{\mu^\varepsilon}{3}}.
        \end{align}
    In particular, choosing $\varepsilon = \frac{1}{3}$ and $x = 1-\mu^{-\frac{1}{3}}$, we see that the right hand side of the above inequality is $2^{-\frac{1}{3}\mu ^{\frac{1}{3}}}$. Also, for $\mu$ large enough, $x$ is close to $1$ and hence $\frac{(1+x^2)^{\frac{1}{4}} }{x \sqrt{(2\pi \mu)}} < \mu^{-\frac{1}{2}}$. This proves \eqref{eq:supercritical region j_mu '}.
\end{proof}

\begin{lemma}\label{lem:gradient L2 whispering}
    When $\nu$ is large enough, for any $m \in \llbracket 1 , N_n \rrbracket$,
    \begin{align}
        \frac{1}{2} \nu^{-\frac{4}{3}} \leq \int_{B_{1}} |w(x)|^2 \de x = j_{\nu} ^{-2} \int_{B_{1}} |\nabla w(x)|^2 \de x \leq \nu^{-\frac{4}{3}}.
    \end{align}
    Also, letting $\zeta_n = j_{\nu} ^{-1} (\nu - \nu ^{\frac{2}{3}}) < 1$, we have that when $\nu$ large enough,
    \begin{align}
        \int_{B_{\zeta_n}} |\nabla w(x)|^2 \de x < 2^{-\frac{1}{3}\nu ^{\frac{1}{3}}} .
    \end{align}

\end{lemma}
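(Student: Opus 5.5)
The plan is to evaluate everything with \Cref{lem:gradient L2 norm} and then feed in the Bessel asymptotics of \Cref{lem:j_mu asymptotics} and \Cref{lem:supercritical region}. For the first claim, integrating by parts gives $\int_{B_1}|\nabla w|^2 = j_\nu^2\int_{B_1} w^2$, since $-\Delta w = j_\nu^2 w$ and $w$ vanishes on $\partial B_1$; this is the middle equality. Setting $s=1$ in \eqref{eq:gradient L2 norm} and using $J_\nu(j_\nu)=0$ kills the first and third terms, so
\begin{align}
\int_{B_1}|\nabla w|^2\,\de x=\frac{j_\nu^2}{2}\bigl(J_\nu'(j_\nu)\bigr)^2,\qquad \int_{B_1}|w|^2\,\de x=\frac{1}{2}\bigl(J_\nu'(j_\nu)\bigr)^2 .
\end{align}
By \eqref{eq:J_mu ' at zero asymptotics}, $(J_\nu'(j_\nu))^2$ lies between $1.113^2\nu^{-4/3}$ and $1.114^2\nu^{-4/3}$, so $\int_{B_1} w^2$ lies in roughly $[0.619,\,0.621]\,\nu^{-4/3}$, which is inside $[\frac12\nu^{-4/3},\nu^{-4/3}]$. (The normalization $\|Y\|_{L^2(\bS^{d-1})}=1$ enters through the boundary term computed in \Cref{lem:gradient L2 norm}, where I would take $Y$ in place of $Y_{n,m}$.)

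For the second claim, apply \eqref{eq:gradient L2 norm} with $s=\zeta_n$, so that $j_\nu s=\nu-\nu^{2/3}=:x_0$. I would bound each term in absolute value. The first term is $\frac{x_0^2-\nu^2}{2}J_\nu^2(x_0)$; since $x_0<\nu$ the coefficient is negative, so this term is $\le 0$ and can be dropped from an upper bound. The second term is $\frac{x_0^2}{2}J_\nu'(x_0)^2\le \frac{\nu^2}{2}\,\nu^{-1}2^{-\frac23\nu^{1/3}}$ by \eqref{eq:supercritical region j_mu '}. The third term is $(-DJ_\nu(x_0)+x_0J_\nu'(x_0))J_\nu(x_0)$. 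Since $J_\nu(x_0)>0$, the piece $-DJ_\nu^2$ is $\le0$. The remaining piece satisfies $|x_0J_\nu'(x_0)J_\nu(x_0)|\le \nu\cdot\nu^{-1/2}\,2^{-\frac23\nu^{1/3}}$ by \eqref{eq:supercritical region j_mu} and \eqref{eq:supercritical region j_mu '}.

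Adding these up gives
\begin{align}
\int_{B_{\zeta_n}}|\nabla w|^2\,\de x\le \bigl(\tfrac{\nu}{2}+\nu^{1/2}\bigr)2^{-\frac23\nu^{1/3}}.
\end{align}
Because $\nu\,2^{-\frac13\nu^{1/3}}\to0$, the right-hand side is below $2^{-\frac13\nu^{1/3}}$ for $\nu$ large. This is the only delicate step, and it is mild. The main point is sign bookkeeping: the polynomially large prefactor $\nu^2-x_0^2\approx2\nu^{5/3}$ sits in front of $J_\nu^2$, and we avoid bounding it by observing that its contribution is negative. If one instead wants an absolute-value bound, one absorbs $\nu^{5/3}\cdot 2^{-\frac23\nu^{1/3}}$ into half the exponential, which also works. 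In either case, the squared exponential decay $2^{-\frac23\nu^{1/3}}$ supplies the spare factor $2^{-\frac13\nu^{1/3}}$ that kills every polynomial prefactor. Finally, $\zeta_n<1$ follows from $j_\nu>\nu$, by \eqref{e:j_mu asymptotics}.
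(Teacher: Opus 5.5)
Your proposal is correct and follows the paper's proof essentially line by line. For the first claim you evaluate \Cref{lem:gradient L2 norm} at $s=1$ and use the asymptotics of $J_\nu'(j_\nu)$ from \Cref{lem:j_mu asymptotics}; for the second you evaluate it at $s=\zeta_n$ and bound the three terms with \Cref{lem:supercritical region}.

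The differences are cosmetic. You discard the nonpositive first term instead of bounding it by $\nu^{4/3}2^{-\frac{2}{3}\nu^{1/3}}$, and you sum to $(\frac{\nu}{2}+\nu^{1/2})2^{-\frac{2}{3}\nu^{1/3}}$ rather than $3\cdot 2^{-\frac{1}{2}\nu^{1/3}}$. One small justification should be corrected: with $x_0=\nu-\nu^{2/3}$, the term $-DJ_\nu^2(x_0)$ is nonpositive because $D=\frac{d}{2}-1\ge 0$, not because $J_\nu(x_0)>0$.
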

\begin{proof}
    From \Cref{lem:gradient L2 norm}, because $ J_{\nu} (j_\nu)=0$, we first see that
        \begin{align}
            \int_{B_{1}} |\nabla w(x)|^2 \de x = \frac{j_{\nu} ^ 2 }{2} \left( J_{\nu} ' (j_{\nu}) \right) ^2 .
        \end{align}
    By \Cref{lem:j_mu asymptotics}, we see that $\nu ^{-\frac{4}{3}} \leq (J_{\nu} ' (j_{\nu}))  ^2 \leq 2 \nu ^{-\frac{4}{3}}$ and $j_{\nu} > \nu$. Because $w$ is a Laplacian eigenfunction of eigenvalue $\Lambda_n = j_{\nu} ^2$, i.e., $\Delta w + j_{\nu} ^2 w = 0$, we use integration by parts and get that 
        \begin{align}
            \int_{B_{1}} |w(x)|^2 \de x = j_{\nu} ^{-2} \int_{B_{1}} |\nabla w(x)|^2 \de x = \frac{1}{2} (J_{\nu} ' (j_{\nu}))  ^2 \in \left[  \frac{1}{2} \nu^{-\frac{4}{3}}, \nu^{-\frac{4}{3}} \right].
        \end{align}

    Next, we estimate $ \int_{B_{\zeta_n}} |\nabla w(x)|^2 \de x$ for large $\nu$. By \Cref{lem:gradient L2 norm}, we estimate the three terms separately. First, by \Cref{lem:supercritical region}, when $\nu$ is large enough,
        \begin{align}
            \frac{j_{\nu} ^2 \zeta_n ^2 - \nu ^2}{2} J_{\nu} ^2 (j_{\nu} \zeta_n) = \frac{-2 \nu^{\frac{5}{3}} + \nu^{\frac{4}{3}}}{2} J_{\nu} ^2 (\nu - \nu ^{\frac{2}{3}}) \leq \nu^{\frac{4}{3}} 2^{-\frac{2}{3}\nu ^{\frac{1}{3}}} < 2^{-\frac{1}{2}\nu ^{\frac{1}{3}}} .
        \end{align}
    Next,
        \begin{align}
            \frac{j_{\nu} ^ 2 \zeta_n^2 }{2} \left( J_{\nu} ' (j_{\nu} \zeta_n) \right) ^2 = \frac{(\nu - \nu ^{\frac{2}{3}})^2}{2} \left( J_{\nu} ' (\nu - \nu ^{\frac{2}{3}}) \right) ^2 \leq \nu ^2 \cdot \nu^{-1} 2^{-\frac{2}{3}\nu ^{\frac{1}{3}}} < 2^{-\frac{1}{2}\nu ^{\frac{1}{3}}} .
        \end{align}
    Finally, because $-D J_{\nu} ^2(j_{\nu} \zeta_n) \leq 0$,
        \begin{align}
            \begin{split}
                &\left(-D J_{\nu}(j_{\nu} \zeta_n) +  j_{\nu} \zeta_n J_{\nu} ' (j_{\nu} \zeta_n)\right) J_{\nu}(j_{\nu} \zeta_n) \leq  (\nu - \nu ^{\frac{2}{3}}) J_{\nu} ' (\nu - \nu ^{\frac{2}{3}}) J_{\nu}(\nu - \nu ^{\frac{2}{3}})
                \\  &\leq \nu \cdot \nu ^{-\frac{1}{2}} 2^{-\frac{2}{3}\nu ^{\frac{1}{3}}} < 2^{-\frac{1}{2}\nu ^{\frac{1}{3}}}.
            \end{split}
        \end{align}
    Hence, for $\nu$ large enough, we have that 
        \begin{align}
            \int_{B_{\zeta_n}} |\nabla w(x)|^2 \de x \leq  3 \cdot 2^{-\frac{1}{2}\nu ^{\frac{1}{3}}} < 2^{-\frac{1}{3}\nu ^{\frac{1}{3}}}.
        \end{align}
\end{proof}

In \Cref{lem:gradient L2 whispering}, because $\zeta_n \to 1^-$ as $n \to +\infty$, \Cref{lem:gradient L2 whispering} already illustrates the Whispering Gallery Mode for the Dirichlet energy of $w$. Next, we build up similar results for $L^p$-norms of $w$, which are similar to \cite{NG2013}. We need the following lemma for Bessel functions. See also for example the equation (9.1.62) in \cite{AS1964} or the equation (10.14.4) in \cite{NIST:DLMF} for \Cref{lem:r<1 bessel function}.
\begin{lemma}\label{lem:r<1 bessel function}
    For any $x \in \bR$,
        \begin{align}
            |J_{\nu}(x)| \leq \frac{\left(\frac{x}{2}\right)^{\nu}}{ \Gamma(\nu +1)},
        \end{align}
    where $\Gamma(\cdot)$ is the Gamma function.
\end{lemma}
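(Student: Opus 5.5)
The bound $|J_\nu(x)|\le (x/2)^\nu/\Gamma(\nu+1)$ is a standard estimate, and I would prove it directly from the power series or, more cleanly, from an integral representation. As written, with $x \in \bR$ and $\nu \geq 0$, the meaningful case is $x \geq 0$, where $(x/2)^\nu$ is defined. For negative $x$ and integer $\nu$ one has $J_\nu(-x)=(-1)^\nu J_\nu(x)$, so the bound holds with $|x|$ in place of $x$. I would therefore prove it for $x\ge 0$ and read the right-hand side with $|x|$ in general.

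I would start from Poisson's integral representation, valid for $\nu>-\tfrac12$:
\begin{align*}
J_\nu(x)=\frac{(x/2)^\nu}{\Gamma(\nu+\tfrac12)\Gamma(\tfrac12)}\int_{-1}^{1}(1-t^2)^{\nu-\frac12}\cos(xt)\,\de t .
\end{align*}
Bounding $|\cos(xt)|\le 1$ gives
\begin{align*}
|J_\nu(x)|\le \frac{(|x|/2)^\nu}{\Gamma(\nu+\tfrac12)\Gamma(\tfrac12)}\int_{-1}^1(1-t^2)^{\nu-\frac12}\,\de t .
\end{align*}
The remaining step is to evaluate the integral. The substitution $t^2=s$ turns it into a Beta integral:
\begin{align*}
\int_{-1}^1(1-t^2)^{\nu-\frac12}\,\de t=B\!\left(\tfrac12,\nu+\tfrac12\right)=\frac{\Gamma(\tfrac12)\Gamma(\nu+\tfrac12)}{\Gamma(\nu+1)} .
\end{align*}
Substituting this back, the Gamma factors cancel exactly and leave $(|x|/2)^\nu/\Gamma(\nu+1)$, which is the claim.

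The only real obstacle is justifying the Poisson representation itself. I would either cite it (Watson, Section 3.3; DLMF 10.9.4), consistent with how the paper cites (9.1.62) of \cite{AS1964}, or derive it. To derive it, expand $\cos(xt)$ in its Taylor series, integrate term by term against $(1-t^2)^{\nu-1/2}$ using the same Beta integrals, and check that the result reproduces the series $J_\nu(x)=\sum_k (-1)^k (x/2)^{\nu+2k}/(k!\,\Gamma(\nu+k+1))$.

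The power series alone is not enough: the bound $\sum_k \frac{(x/2)^{2k}}{k!(\nu+1)_k}\le e^{x^2/(4(\nu+1))}$ carries a spurious exponential factor, so the integral representation is the right route. Since all later uses in the paper have $\nu$ large and $x\ge0$, the hypothesis $\nu>-\tfrac12$ is harmless.
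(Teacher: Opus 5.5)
Your argument is correct. Poisson's integral (valid for $\nu>-\tfrac12$), the bound $|\cos(xt)|\le 1$, and the Beta evaluation $\int_{-1}^{1}(1-t^2)^{\nu-\frac12}\,\de t=\Gamma(\tfrac12)\Gamma(\nu+\tfrac12)/\Gamma(\nu+1)$ give exactly $(|x|/2)^{\nu}/\Gamma(\nu+1)$. Reading the right-hand side with $|x|$ and restricting to $\nu\ge-\tfrac12$ match the hypotheses in the sources the paper relies on.

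The paper itself gives no proof. It simply quotes (9.1.62) of \cite{AS1964} and (10.14.4) of \cite{NIST:DLMF}. So your proposal is not a competing route; it supplies the standard derivation behind that citation. The citation buys brevity, and your version makes the lemma self-contained at the cost of citing or verifying Poisson's representation instead. Your term-by-term check of that representation, using $\Gamma(k+\tfrac12)=\frac{(2k)!}{4^k k!}\sqrt{\pi}$, does close that gap.

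One nuance concerns your closing remark that the power series is not enough. For the range the paper actually uses ($0\le x\le 1$ with $\nu$ large, in the proof of \Cref{lem:Lp whispering}), the series does suffice. Set $a_k=(x^2/4)^k/(k!\,\Gamma(\nu+k+1))$. Then $a_{k+1}/a_k=\frac{x^2/4}{(k+1)(\nu+k+1)}\le 1$ whenever $x^2\le 4(\nu+1)$. The alternating series test then gives $0\le J_\nu(x)\le (x/2)^\nu/\Gamma(\nu+1)$ on $[0,2\sqrt{\nu+1}]$. Only the full real line genuinely calls for the integral representation.
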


\begin{lemma}\label{lem:Lp whispering}
    Fix $p \geq 2$ and let $\zeta_n = j_{\nu} ^{-1} (\nu - \nu ^{\frac{2}{3}}) < 1$. We have that for $\nu$ large enough (depending on $d$), for any $m \in \llbracket 1, N_n \rrbracket$,
    \begin{align}
        \int_{B_{\zeta_n}} |w(x)|^{p} \de x \leq 2^{-\frac{1}{6}{\nu}^{\frac{1}{3}} \cdot p} .
    \end{align}
\end{lemma}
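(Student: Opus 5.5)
We will work with the normalization fixed earlier, $w = r^{-D} J_{\nu}(j_{\nu} r) Y(\theta)$ with $\|Y\|_{L^2(\bS^{d-1})}=1$, and split $B_{\zeta_n}$ into two shells: a small inner ball $B_{1/2}$ (say), where \Cref{lem:r<1 bessel function} gives superexponential smallness, and the annulus $\{1/2 \le r \le \zeta_n\}$, where we use the monotonicity of $J_\nu$ below its turning point together with \Cref{lem:supercritical region}. The angular factor will be handled by an $L^\infty$ bound on $Y$: since $Y$ is an $L^2$-normalized spherical harmonic of degree $n$, the standard bound gives $\|Y\|_{L^\infty(\bS^{d-1})}^2 \le N_n/|\bS^{d-1}| \le C(d)\, \nu^{d-2}$. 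Hence
\[
\int_{B_{\zeta_n}} |w|^p \de x \le \|Y\|_\infty^{p-2}\int_0^{\zeta_n} r^{d-1-pD} |J_\nu(j_\nu r)|^p \de r \cdot \|Y\|_2^2 \le C(d)^{p}\nu^{(d-2)p/2}\int_0^{\zeta_n} r^{d-1-pD}|J_\nu(j_\nu r)|^p \de r .
\]
Only polynomial-in-$\nu$ losses (raised to power $p$) arise here, and these will be absorbed into the exponential gain.

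For the radial integral, we first use that $J_\nu(s)$ is positive and increasing on $(0, j_\nu')$, and that $\nu-\nu^{2/3} < \nu \le j_\nu'$. Thus for $r \le \zeta_n$ we have $0\le J_\nu(j_\nu r) \le J_\nu(\nu-\nu^{2/3}) < 2^{-\frac13 \nu^{1/3}}$ by \Cref{lem:supercritical region}. On the annulus $1/2\le r\le\zeta_n$ the weight satisfies $r^{d-1-pD}\le 2^{pD}$, so the contribution there is at most $2^{pD}\, 2^{-\frac{p}{3}\nu^{1/3}}$. On $r<1/2$ the weight $r^{d-1-pD}$ may be singular when $pD > d-1$. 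There we instead use \Cref{lem:r<1 bessel function}: $|J_\nu(j_\nu r)| \le (j_\nu r/2)^\nu/\Gamma(\nu+1)$, so the integrand is bounded by $r^{d-1-pD+p\nu}(j_\nu/2)^{p\nu}\Gamma(\nu+1)^{-p}$. Since $\nu \ge n + D$, the exponent satisfies $p\nu - pD \ge 0$, so the integral converges. By Stirling together with $j_\nu < 2\nu$, we get $(j_\nu r/2)^\nu/\Gamma(\nu+1) \le (e\, j_\nu r/(2\nu))^{\nu} \le (e/2)^\nu$ at $r=1/2$; a cutoff slightly smaller, say $r<1/4$, gives $(e/4)^\nu$, which is exponentially small in $\nu$ and hence far smaller than $2^{-\nu^{1/3}}$. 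Using the cutoff $1/4$ instead of $1/2$ only changes the annulus weight constant to $4^{pD}$.

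Combining the two pieces yields
\[
\int_{B_{\zeta_n}}|w|^p \le C(d)^p \nu^{(d-2)p/2}\bigl(4^{pD}2^{-\frac p3 \nu^{1/3}} + (e/4)^{p\nu}\bigr) \le 2^{-\frac16 \nu^{1/3} p}
\]
for $\nu$ large depending on $d$. The prefactor $(C\nu^{(d-2)/2}4^D)^p$ is swallowed by $2^{-\frac16\nu^{1/3}p}$, uniformly in $p\ge 2$ because everything is a $p$-th power.

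The main obstacle is the $L^\infty$ control of the angular part uniformly over the $N_n$-dimensional eigenspace. This is where we rely on the addition-theorem bound $\sum_m Y_{n,m}(\theta)^2 = N_n/|\bS^{d-1}|$ together with Cauchy--Schwarz in $a$. A second point to check is the claimed monotonicity of $J_\nu$ up to $\nu-\nu^{2/3}$, which follows from $j_\nu' > \nu$ in \Cref{lem:j_mu asymptotics}.
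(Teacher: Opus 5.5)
Your proof is correct. It shares the paper's skeleton: separate the angular and radial factors, lose only polynomially in $\nu$ on the angular part, use \Cref{lem:r<1 bessel function} near the origin and \Cref{lem:supercritical region} elsewhere. It uses different tools at two points.

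For the angular factor, the paper invokes \Cref{lem:sogges estimate} on $\bS^{d-1}$ to get $\|Y\|_{L^p(\bS^{d-1})}\le C\nu^{(d-1)/2}$. You instead use the addition theorem and Cauchy--Schwarz to get $\|Y\|_{L^\infty(\bS^{d-1})}\le C(d)\nu^{(d-2)/2}$, then interpolate with $\|Y\|_{L^2(\bS^{d-1})}=1$. This is elementary, self-contained and slightly sharper, though only polynomial growth matters here.

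For the radial factor, the paper rescales to $\int_0^{\nu-\nu^{2/3}} r^{d-1-pD}|J_\nu(r)|^p\,\de r$ and splits at $r=1$, i.e. radius $j_\nu^{-1}$ in the original variable. The power-series bound is then trivially tiny on $[0,1]$, and the paper pays a polynomial factor $\nu^{pD+1-d}$ for the singular weight on $[\nu^{-1},1]$. You split at the fixed radius $1/4$. This keeps the weight bounded by $4^{pD}$ on the outer annulus, but requires Stirling and $j_\nu<2\nu$ to make the power-series bound exponentially small on $B_{1/4}$.

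Your version also makes explicit a step the paper leaves tacit. The paper applies the decay bound at every point of $[1,\nu-\nu^{2/3}]$, although \Cref{lem:supercritical region} is stated only at the endpoint $\nu-\nu^{2/3}$. Your monotonicity argument closes this: $J_\nu$ is positive and increasing on $(0,j_\nu')$, and $j_\nu'>\nu$ by \Cref{lem:j_mu asymptotics}.

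One harmless slip: on $B_{1/4}$, bounding $r^{d-1+pn}$ at the cutoff gives at most $(1/4)^{pn}=4^{pD}(1/4)^{p\nu}$. So the inner-ball contribution is at most $4^{pD}(e/4)^{p\nu}$ rather than $(e/4)^{p\nu}$. The factor $4^{pD}$ is already absorbed in your prefactor.
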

\begin{proof}
    For any $s \geq 0$, we first see that 
        \begin{align}
            \int_{B_{s}} |w(x)|^p \de x = \int_0 ^s r^{d-1 - pD} |J_{\nu}(j_{\nu} r)|^p \de r \int_{\bS^{d-1}} |Y (\theta)| ^p \de \theta.
        \end{align}
    We first compute $\int_{\bS^{d-1}} |Y (\theta)| ^p \de \theta$. Recall that $\int_{\bS^{d-1}} |Y(\theta)| ^2 \de \theta = 1$ and $\Delta_{\bS^{d-1}} Y + n(n+d-2) Y = 0$. We also notice that $\nu^2 = n^2+\frac{d^2}{4}+1 +nd - 2n - d = n(n+d-2)+(\frac{d}{2}-1)^2 \geq n(n+d-2)$. We use \Cref{lem:sogges estimate} and get that for some $C>0$ depending on $d$,
        \begin{align}\label{eq:Ynm lp norm}
            \|Y\|_{L^{p} (\bS^{d-1})} \leq  C \left(n(n+d-2)\right) ^{\frac{d-1}{4}} \cdot \|Y\|_{L^{2} (\bS^{d-1})}  \leq C \nu ^{\frac{d-1}{2}}.
        \end{align}

    Next, we estimate the term $\int_0 ^{\zeta_n} r^{d-1 - pD} |J_{\nu}(j_{\nu} r)|^p \de r$. By \Cref{lem:j_mu asymptotics}, we have that
        \begin{align}
            \begin{split}
                \int_0 ^{\zeta_n} r^{d-1 - p D} |J_{\nu}(j_{\nu} r)|^{p} \de r &= j_{\nu} ^{p D - d} \int_0 ^{(\nu -  \nu ^{\frac{2}{3}})} r^{d-1 - p D} |J_{\nu}(r)|^{p} \de r 
                \\  &\leq 2^{|p D-d|} \nu^{p D - d} \int_0 ^{(\nu -  \nu ^{\frac{2}{3}})} r^{d-1 - p D} |J_{\nu}(r)|^{p} \de r,
            \end{split}
        \end{align}
and we divide the above integral into the integrals over interval $[0,1]$ and $[1, \nu - \nu^{\frac{2}{3}}]$. Using \Cref{lem:r<1 bessel function}, we have that
    \begin{align}\label{eq:zeta lp norm 0 1}
        \begin{split}
            \nu^{pD - d} \int_0 ^{1} r^{d-1 - pD} |J_{\nu}(r)|^{p} \de r &= \nu^{pD - d} \int_0 ^{1} r^{d-1} |r^{-D}J_{\nu}(r)|^{p} \de r 
            \\  &\leq \nu^{pD - d} \int_0 ^{1}  \left|\frac{r^n}{2^{\nu} \cdot \Gamma(\nu+1)} \right|^{p} \de r \leq \frac{\nu^{pD - d}}{2^{\nu p} \cdot (\Gamma(\nu+1))^p},
        \end{split}
    \end{align}
which goes to $0$ faster than $2^{-\nu p}$, because $\Gamma(\nu+1) \sim \nu!$, which is much larger than $\nu^{D - d/p}$.
By \Cref{lem:supercritical region}, we have that
    \begin{align}\label{eq:zeta lp norm 1 nu}
        \begin{split}
            &\nu^{pD - d} \int_1 ^{(\nu -  \nu ^{\frac{2}{3}})} r^{d-1 - pD} |J_{\nu}(r)|^{p} \de r \leq 2^{-\frac{1}{3}{\nu}^{\frac{1}{3}} \cdot p} \cdot \nu^{p D - d} \int_1 ^{(\nu -  \nu ^{\frac{2}{3}})} r^{d-1 - pD} \de r 
            \\  &= 2^{-\frac{1}{3}{\nu}^{\frac{1}{3}} \cdot p} \cdot  \int_{\nu^{-1}} ^{(1 -  \nu ^{-\frac{1}{3}})} r^{d-1 - pD} \de r \leq 2^{-\frac{1}{3}{\nu}^{\frac{1}{3}} \cdot p} \cdot  \int_{\nu^{-1}} ^{1} r^{d-1 - pD} \de r.
        \end{split}
    \end{align}
When $d-1-pD < 0$, we see that $\int_{\nu^{-1}} ^{1} r^{d-1 - pD} \de r < \nu^{p D +1- d} \int_{\nu^{-1}} ^{1} \de r < \nu^{p D +1- d}$. When $d-1-pD \geq 0$, $\int_{\nu^{-1}} ^{1} r^{d-1 - pD} \de r \leq \int_{\nu^{-1}} ^{1} 1 \de r <1$. Hence, in all these cases, \eqref{eq:zeta lp norm 1 nu} goes to $0$ exponentially fast of order at least $2^{-\frac{1}{4}{\nu}^{\frac{1}{3}} \cdot p}$ when $\nu$ is large enough (depending on $d$). Combine \eqref{eq:Ynm lp norm}, \eqref{eq:zeta lp norm 0 1}, and \eqref{eq:zeta lp norm 1 nu}, we finish the proof for \Cref{lem:Lp whispering}.

\end{proof}

\begin{proof}[Proof of \Cref{thm:main whispering gallery linear eigen}]
    \Cref{thm:main whispering gallery linear eigen} actually follows directly from \Cref{lem:gradient L2 whispering} and \Cref{lem:Lp whispering}. To show the exact results in \Cref{thm:main whispering gallery linear eigen}, we explain those terms in \Cref{thm:main whispering gallery linear eigen}. First, because all Dirichlet Laplacian eigenfunctions on $\Omega = B_1$ are linear combinations of \eqref{e:sharp eigenfunction}, there must be an $n$ such that $\Lambda = \Lambda_n = j_{\nu}^2$.

    First, we notice that $\zeta_n > \tau_n = 1-2\Lambda_n ^{-\frac{1}{6}} = 1-2j_{\nu} ^{-\frac{1}{3}}$ when $\nu$ is large, which is equivalent to $\nu - \nu^{\frac{2}{3}} > j_{\nu} - 2j_{\nu}^{\frac{2}{3}} \iff 2j_{\nu}^{\frac{2}{3}} > \nu^{\frac{2}{3}}+ j_{\nu} - \nu $. Using \Cref{lem:j_mu asymptotics}, we see that when $\nu$ is large enough, $2j_{\nu}^{\frac{2}{3}} > 2 \nu^{\frac{2}{3}}$, and $\nu^{\frac{2}{3}}+ j_{\nu} - \nu < \nu^{\frac{2}{3}} + 2\nu^{\frac{1}{3}} < 2 \nu^{\frac{2}{3}}$. Hence, $\zeta_n > \tau_n$, and the results in \Cref{lem:gradient L2 whispering} and \Cref{lem:Lp whispering} hold true also after replacing $\zeta_n$ with $\tau_n$.

    By \Cref{lem:Lp whispering} and \Cref{lem:gradient L2 whispering}, we see that when $\nu$ is large enough,
        \begin{align}
            \left(\int_{B_{\tau_n}} |w(x)|^{p} \de x \right)^{\frac{1}{p}} \leq 2^{-\frac{1}{6}{\nu}^{\frac{1}{3}}} \leq 2^{-\frac{1}{6}{\nu}^{\frac{1}{3}}}  \cdot \left(2\nu^{\frac{4}{3}} \cdot \int_{B_1} |w(x)|^{2} \de x \right)^{\frac{1}{2}} \leq 2^{-\frac{1}{8}{\nu}^{\frac{1}{3}}} \left(\int_{B_1} |w(x)|^{2} \de x \right)^{\frac{1}{2}}.
        \end{align}
    Also, we have that $2^{-\frac{1}{8}{\nu}^{\frac{1}{3}}} \leq 2^{-\frac{1}{10}{\Lambda}_n ^{\frac{1}{6}}}$ when $\nu$ is large enough because by \Cref{lem:j_mu asymptotics}, $\nu \asymp j_\nu \asymp \Lambda_n ^{\frac{1}{2}}$ when $\nu$ is large enough.
    The above inequality is invariant under the scaling of $w$. Hence, because $\|w_{\Lambda}\|_2 = 1$ for the $w_{\Lambda}$ in \Cref{thm:main whispering gallery linear eigen}, this proves the second inequality in \eqref{eq:main linear eigen 1}. The first inequality in \eqref{eq:main linear eigen 1} can be proved by the two inequalities in \Cref{lem:gradient L2 whispering} in a similar way.
\end{proof}

%--------------------------------
%\vspace{0.5in}
%\noindent {\bf %Acknowledgments} 

%-------------------------------------------------

%%%% Main text entry area:

%\section*{???}%% if no title is needed, leave empty \section*{}.
%\end{appendix}
%%%%%%%%%%%%%%%%%%%%%%%%%%%%%%%%%%%%%%%%%%%%%%
%% Multiple Appendixes:                     %%
%%%%%%%%%%%%%%%%%%%%%%%%%%%%%%%%%%%%%%%%%%%%%%

%------------------------------------------------

%\bibliographystyle{acm} % Style BST file (imsart-number.bst or imsart-nameyear.bst)
%\bibliography{filename.bib} 

\bibliographystyle{abbrv} % Style BST file (imsart-number.bst or imsart-nameyear.bst)
\bibliography{whispering_gallery.bib}

\end{document}